\newcommand\PP{\mathbb{P}}
\newcommand\Ptwo{{\PP^2}}
\newcommand\Pfour{{\PP^4}}
\newcommand\ZZ{\mathbb{Z}}
\newcommand\ZZp{\ZZ_{>0}}
\newcommand\ZZnn{\ZZ_{\ge 0}}
\newcommand\ZZnz{\ZZ_{\ne 0}}
\newcommand\QQ{\mathbb{Q}}
\newcommand\RR{\mathbb{R}}
\newcommand\RRp{\RR_{>0}}
\newcommand\RRnn{\RR_{\ge 0}}
\newcommand\RRnz{\RR_{\ne 0}}
\newcommand\Ga{\mathbb{G}_\mathrm{a}}
\newcommand\A{\mathcal{A}}
\newcommand\E{\mathcal{E}}
\newcommand\T{\mathcal{T}}
\newcommand\R{\mathcal{R}}
\renewcommand\P{\mathcal{P}}
\newcommand\M{\mathcal{M}}
\newcommand{\Tu}[1]{\T_{#1}}
\newcommand{\TS}{\Tu S}
\newcommand{\TtS}{\Tu \tS}
\newcommand{\bA}{\mathbf A}
\newcommand{\Aone}{{\mathbf A}_1}
\newcommand{\Atwo}{{\mathbf A}_2}
\newcommand{\Athree}{{\mathbf A}_3}
\newcommand{\Afour}{{\mathbf A}_4}
\newcommand{\Afive}{{\mathbf A}_5}
\newcommand{\Dfour}{{\mathbf D}_4}
\newcommand{\Dfive}{{\mathbf D}_5}
\newcommand{\Esix}{{\mathbf E}_6}
\newcommand{\tS}{{\widetilde S}}
\newcommand\inj{\hookrightarrow}
\newcommand\xx{\mathbf{x}}
\renewcommand\tt{\mathbf{t}}
\newcommand\dd{\ \mathrm{d}}
\newcommand{\congr}[3]{{#1} \equiv {#2}\ (\mathrm{mod}\ {#3})}
\newcommand{\base}[7]{\ee^{({#1},{#2},{#3},{#4},{#5},{#6},{#7})}}
\DeclareMathOperator{\hcf}{gcd}
\DeclareMathOperator{\rk}{rk}
\DeclareMathOperator{\Pic}{Pic}
\DeclareMathOperator{\Cox}{Cox}
\DeclareMathOperator{\Spec}{Spec}
\DeclareMathOperator{\vol}{Vol}
\DeclareMathOperator\supp{supp}
\newcommand\al{\alpha}
\newcommand\e{\eta}
\newcommand{\ee}{\boldsymbol{\eta}}
\renewcommand{\aa}{\boldsymbol{\alpha}}
\newcommand{\bb}{\boldsymbol{\beta}}
\newcommand{\cc}{\boldsymbol{\gamma}}
\newcommand{\cp}[2]{{\hcf(#1,#2)=1}}
\newcommand{\ncp}[2]{{\hcf(#1,#2) > 1}}
\newcommand\phis{\phi^*}
\newcommand\phid{\phi^\dagger}
\newcommand\kk{\mathbf{k}}
\renewcommand\div{|}
\newcommand\ndiv{\nmid}
\newcommand\ddiv{||}
\newcommand\where{\mid}
\newcommand\Where{\,\Big|\,}
\newcommand\WHERE{\,\Bigg|\,}
\renewcommand\theta{\vartheta}
\renewcommand\rho{\varrho}
\newcommand\rto{\dashrightarrow}
\newcommand\sums[1]{\sum_{\substack{#1}}}
\newcommand\maxs[1]{\max_{\substack{#1}}}
\newcommand\mins[1]{\min_{\substack{#1}}}
\newcommand\prods[2]{\prod_{\substack{#1}}#2}
\newcommand\prodp[1]{\prod_p #1}
\newcommand\prodpp[1]{\prod_p\left(#1\right)}
\newcommand\prodpd[2]{\prod_{p\div #1}#2}
\newcommand\prodpdd[3]{\prod_{p^{#1}\ddiv #2}#3}
\newcommand\prodpn[2]{\prod_{p\ndiv #1}#2}
\newcommand\prodpdp[2]{\prodpd{#1}{\left(#2\right)}}
\newcommand\prodpddp[3]{\prodpdd{#1}{#2}{\left(#3\right)}}
\newcommand\prodpnp[2]{\prodpn{#1}{\left(#2\right)}}
\newcommand\prodpdn[3]{\prods{p\div #1\\p\ndiv #2}{#3}}
\newcommand\prodpddd[4]{\prods{p^{#1}\ddiv #2\\p\div #3}{#4}}
\newcommand\prodpddn[4]{\prods{p^{#1}\ddiv #2\\p\ndiv #3}{#4}}
\newcommand\prodpdnp[3]{\prodpdn{#1}{#2}{\left(#3\right)}}
\newcommand\Thetabox[2]{{\left[\begin{gathered}#1\\\text{Definition~#2}\end{gathered}\right]}}
\newcommand\injref[2]{\ar@{->}[#1]_-{\text{#2}}}
\newcommand\injreff[2]{\ar@{->}[#1]^-{\text{#2}}}
\newcommand\injrefff[2]{\ar@{->}[#1]_-{\text{#2}}}
\newtheorem{theorem}{Theorem}[section]
\newtheorem*{theorem*}{Theorem}
\newtheorem{lemma}[theorem]{Lemma}
\newtheorem{prop}[theorem]{Proposition}
\newtheorem{cor}[theorem]{Corollary}
\newtheorem*{question*}{Question}
\newtheorem*{conj*}{Conjecture}
\theoremstyle{definition}
\newtheorem{definition}[theorem]{Definition}
\theoremstyle{remark}
\newtheorem{remark}[theorem]{Remark}
\newtheorem{example}[theorem]{Example}
\numberwithin{equation}{section}
\numberwithin{table}{section}
\numberwithin{figure}{section}
\begin{document}

\title{Counting integral points on universal torsors}

\author{Ulrich Derenthal}

\address{Institut f\"ur Mathematik, Universit\"at Z\"urich,
  Winterthurerstrasse 190, 8057 Z\"urich, Switzerland}

\email{ulrich.derenthal@math.unizh.ch}

\begin{abstract}
  Manin's conjecture for the asymptotic behavior of the number of rational
  points of bounded height on del Pezzo surfaces can be approached through
  universal torsors. We prove several auxiliary results for the estimation of
  the number of integral points in certain regions on universal torsors.  As
  an application, we prove Manin's conjecture for a singular quartic del Pezzo
  surface.
\end{abstract}

\subjclass{11G35 (11D45, 14G05)}

\maketitle

\tableofcontents

\section{Introduction}\label{sec:counting_introduction}

The distribution of rational points on smooth and singular del Pezzo surfaces
is predicted by a conjecture of Yu.~I.~Manin \cite{MR89m:11060}.  For a del
Pezzo surface $S$ of degree $d \ge 3$ defined over the field $\QQ$ of rational
numbers, we consider a height function $H$ induced by an anticanonical
embedding of $S$ into $\PP^d$, where $H(\xx) = \max\{|x_0|, \dots, |x_d|\}$
for $\xx \in S(\QQ) \subset \PP^d(\QQ)$ represented by coprime integral
coordinates $x_0, \dots, x_d$.
Manin's conjecture makes the following prediction for the asymptotic behavior
of the number of rational points of height at most $B$ on the complement $U$
of the lines on $S$. As $B \to \infty$,
\begin{equation*}
  N_{U,H}(B) = \#\{\xx \in U(\QQ) \where H(\xx) \le B\} \sim cB(\log B)^{k-1},
\end{equation*}
where $k$ is the rank of the Picard group of $S$ (resp.\ of its minimal
desingularization if $S$ is a singular del Pezzo surface) and the leading
constant $c$ has a conjectural interpretation due to E.~Peyre
\cite{MR1340296}.

One approach to Manin's conjecture for del Pezzo surfaces uses \emph{universal
  torsors}. This approach was introduced by P.~Salberger \cite{MR1679841} in
the case of toric varieties. It also lead to the proof of Manin's conjecture
for some non-toric del Pezzo surfaces that are \emph{split}, i.e., all of
whose lines are defined over $\QQ$: quartic del Pezzo surfaces with a
singularity of type $\Dfive$ \cite{MR2320172}, $\Dfour$ \cite{MR2290499}
resp.\ $\Afour$ \cite{arXiv:0710.1560}, and a cubic surface with $\Esix$
singularity \cite{MR2332351}.
  
These proofs of Manin's conjecture for a split del Pezzo surface $S$ consist
of three main steps.
\begin{enumerate}[(1)]
\item\label{it:strategy_passage} One constructs an explicit bijection
  between rational points of bounded height on $S$ and integral points
  in a region on a universal torsor $\TS$.
\item\label{it:strategy_counting} Using methods of analytic number theory,
  one estimates the number of integral points in this region on the torsor
  by its volume.
\item\label{it:strategy_manipulation} One shows that the volume of this region
  grows asymptotically as predicted by Yu.~I.~Manin and E.~Peyre.
\end{enumerate}

Step~\ref{it:strategy_passage} is the focus of joint work with Yu.~Tschinkel
\cite[Section~4]{MR2290499}, giving a geometrically motivated approach to
determine a parameterization of the rational points on $S$ by integral points
on a universal torsor explicitly.

For step~\ref{it:strategy_counting}, we estimate the number of integral points
on the $(k+2)$-dimensional variety $\TS$ by performing $k+2$ summations over
one \emph{torsor variable} after the other; the remaining torsor variables are
determined by the \emph{torsor equations} defining $\TS$ as an affine variety.
In each summation, the main problem is to show that an error term summed over
the remaining variables gives a negligible contribution; see
Section~\ref{sec:first_summation} for the error term of the first summation in
a certain setting.

For these summations, the previous articles rely on some auxiliary analytic
results dealing with the average order of certain arithmetic functions over
intervals that are proved in a specific setting.  In this article, we
harmonize and generalize many of the analytic tools that have been brought to
bear so far; see Figure~\ref{fig:functions} for an overview of the sets of
arithmetic functions that we introduce. We expect that our results can be
applied to many different del Pezzo surfaces, at least to cover the more
standard bits of the argument. This will allow future work on Manin's
conjecture for del Pezzo surfaces to concentrate on the essential difficulties
in the estimation of some of the error terms, without having to reimplement
the routine parts.

As an application of our general techniques, we prove Manin's conjecture in a
new case: a quartic del Pezzo surface with singularity type $\Athree+\Aone$
(Section~\ref{sec:qa1a3}). This example also demonstrates how we can deal with
a new geometric feature. In the final $k$ summations, the previous proofs of
Manin's conjecture for split del Pezzo surfaces made crucial use of the fact
that the nef cone (the dual of the effective cone with respect to the
intersection form) is simplicial (in the quartic $\Dfive$ and $\Dfour$ cases
and in the cubic $\Esix$ case) or at least the difference of two simplicial
cones (in the quartic $\Afour$ case). The nef cone of the quartic surface
treated here has neither of these shapes. However, the techniques introduced
in Section~\ref{sec:generic_completion} are not sensitive to the shape of the
nef cone. In our example, they allow to handle the final $k+1=7$ summations
at the same time.

In fact, we expect that the techniques of Section~\ref{sec:generic_completion}
will cover the final $k$ summations for any del Pezzo surface. This would
narrow done the main difficulty of the universal torsor strategy to the
estimation of the error term in the first and second summation of
step~\ref{it:strategy_counting}.  For example, in recent joint work with
T.~D.~Browning, a proof of Manin's conjecture for a cubic surface with
$\Dfive$ singularity \cite{arXiv:0807.4733}, we make extensive use of the
results in this article to handle the final seven of nine summations, so that
we can focus on the considerable additional technical effort that is needed to
estimate the first two error terms.

Step~\ref{it:strategy_manipulation} is mixed with the second step in the basic
examples of the quartic $\Dfive$ \cite{MR2320172}, $\Dfour$ \cite{MR2290499}
and cubic $\Esix$ \cite{MR2332351} surfaces. However, it seems more natural to
treat the third step separately in more complicated cases, motivated by the
shape of the polytope whose volume appears in the leading constant. First
examples of this can be found in the treatment of the quartic $\Afour$
\cite{arXiv:0710.1560} and cubic $\Dfive$ \cite{arXiv:0807.4733} surfaces, and
we take the same approach in our example in Section~\ref{sec:qa1a3}.

\medskip\noindent\textbf{Acknowledgment.} The author thanks T.~D.~Browning and
the referee for their comments leading to improvements in the exposition of
this paper. He was partially supported by a Feodor Lynen Research Fellowship
of the Alexander von Humboldt Foundation and DFG grant DE 1646/1-1.

\section{The first summation}\label{sec:first_summation}

Let $S \subset \PP^d$ be an anticanonically embedded singular del Pezzo
surface of degree $d \ge 3$, with minimal desingularization $\tS$.
The first step of the universal torsor approach is to translate the counting
problem from rational points on $S$ to integral points on a universal torsor
$\TtS$. Then the number $N_{U,H}(B)$ of rational points of height at most $B$
on the complement $U$ of the lines on $S$ is the number of integral solutions
to the equations defining $\TtS$ that satisfy certain explicit coprimality
conditions and height conditions.

In several cases (see Remark~\ref{rem:special_form}), the counting problem on
$\TtS$ has the following \emph{special form}: $N_{U,H}(B)$ equals the
number of $(\al_0,\beta_0, \gamma_0, \aa, \bb, \cc, \delta)$ satisfying
\begin{itemize}
\item $(\al_0, \beta_0, \gamma_0) \in \ZZ_* \times \ZZ \times \ZZ$, where
  $\ZZ_*$ is $\ZZ$ or $\ZZnz$, $\aa=(\al_1,\dots,\al_r) \in \ZZp^{r}$,
  $\bb=(\beta_1,\dots,\beta_s) \in \ZZp^s$, $\cc=(\gamma_1, \dots, \gamma_t)
  \in \ZZp^t$, $\delta \in \ZZp$.
\item one torsor equation of the form
  \begin{equation}\label{eq:generic_torsor}
    \al_0^{a_0}\al_1^{a_1}\cdots \al_r^{a_r}+ \beta_0^{b_0}\beta_1^{b_1}\cdots
    \beta_s^{b_s}+\gamma_0\gamma_1^{c_1}\cdots\gamma_t^{c_t}=0,
  \end{equation}
  with $(a_0, \dots, a_r) \in \ZZp^{r+1}$, $(b_0, \dots, b_s) \in \ZZp^{s+1}$,
  $(c_1, \dots, c_t) \in \ZZp^t$. In particular, $\gamma_0$ appears linearly
  in the torsor equation, while $\delta$ does not appear.
\item height conditions that are written independently of $\gamma_0$
  (which can be achieved using~(\ref{eq:generic_torsor})) as
  \begin{equation}\label{eq:generic_height}
    h(\al_0, \beta_0, \aa, \bb, \cc, \delta; B) \le 1,
  \end{equation} for some function $h: \RR^{r+s+t+3} \times \RR_{\ge 3} \to
  \RR$. We assume that $h(\al_0,\beta_0,\aa,\bb,\cc,\delta;B) \le 1$ if and
  only if $\beta_0$ is in a union of finitely many intervals $I_1, \dots, I_n$
  whose number
  $n = n(\al_0,\aa,\bb,\cc,\delta;B)$ is bounded independently of
  $\al_0,\aa,\bb,\cc,\delta$ and $B$. By adding some empty intervals if
  necessary, we may assume that $n$ does not depend on
  $\al_0,\aa,\bb,\cc,\delta$ and $B$. For $j=1, \dots, n$, let $t_{0,j},
  t_{1,j}$ be the start and end point of $I_j$.
\item coprimality conditions that are described by
  Figure~\ref{fig:generic_dynkin} in the following sense. Let $A_i$
  (resp.\ $B_i$, $C_i$, $D$) correspond to $\al_i$ (resp. $\beta_i$,
  $\gamma_i$, $\delta$). Then two coordinates are required to be coprime if
  and only if the corresponding vertices in Figure~\ref{fig:generic_dynkin}
  are not connected by an edge. For variables corresponding to triples of
  pairwise connected symbols (besides $A_0,B_0,C_0$, this happens for triples
  consisting of $D$ and two of $A_0,B_0,C_0$ if at least two of $r,s,t$
  vanish), we assume that $\al_0, \beta_0, \gamma_0$ are allowed to have any
  common factor, while each prime dividing $\delta$ may divide at most one of
  $\al_0, \beta_0, \gamma_0$.
\end{itemize}

\begin{figure}[ht]
  \centering
  \[\xymatrix{A_0 \ar@{-}[r] \ar@{-}[dr] \ar@{-}[dd] & A_r  \ar@{-}[r] &
    A_{r-1} \ar@{-}[r] & \dots \ar@{-}[r] & A_1 \ar@{-}[dr]\\
    & B_0 \ar@{-}[r] & B_s \ar@{-}[r] & \dots \ar@{-}[r] & B_1 \ar@{-}[r] & D\\
    C_0 \ar@{-}[ur] \ar@{-}[r] & C_t \ar@{-}[r] & C_{t-1} \ar@{-}[r] & \dots
    \ar@{-}[r] & C_1 \ar@{-}[ur]}\]
  \caption{Extended Dynkin diagram}
  \label{fig:generic_dynkin}
\end{figure}

\begin{remark}\label{rem:special_form}
  The geometric background of this special form is as follows. A natural
  realization of a universal torsor $\TtS$ as an open subset of an affine
  variety is provided by \[\TtS \inj \Spec(\Cox(\tS))\]
  \cite[Theorem~5.6]{hassett_clay}. The coordinates of the affine variety
  $\Spec(\Cox(\tS))$ correspond to generators of the Cox ring of $\tS$.

  In \cite{math.AG/0604194}, we have classified singular del Pezzo surfaces
  $S$ of degree $d \ge 3$ where $\Spec(\Cox(\tS))$ is defined by precisely one
  \emph{torsor equation}. It includes the \emph{extended Dynkin diagrams}
  describing the configuration of the divisors on $\tS$ that correspond to the
  generators of $\Cox(\tS)$. In many cases, the extended Dynkin diagram has
  the special shape of Figure~\ref{fig:generic_dynkin}; see
  Table~\ref{tab:special_form} for their singularity types. In all cases
  besides one of the two isomorphy classes of cubic surfaces of type $\Dfour$,
  the torsor equation has the form of equation~\eqref{eq:generic_torsor}.

    \begin{table}[ht]
    \begin{center}
      \begin{equation*}
        \begin{array}{|c||c|c|}
          \hline
          \text{degree} &\text{shape of Figure~\ref{fig:generic_dynkin}} &
          \text{different shape}\\
          \hline\hline
          6 & \Aone,\ \Atwo & -\\
          5 & \Atwo,\ \Athree,\ \Afour & \Aone\\
          4 & \Athree,\ \Athree+\Aone,\ \Afour,\ \Dfour,\ \Dfive &
          3\Aone,\ \Atwo+\Aone\\
          3 & \Afour+\Aone,\ \Afive+\Aone,\ \Dfour,\ \Dfive,\ \Esix
          & \Athree+2\Aone,\ 2\Atwo+\Aone\\
          \hline
        \end{array}
      \end{equation*}
    \end{center}
    \caption{Extended Dynkin diagrams in \cite{math.AG/0604194}.}
    \label{tab:special_form}
  \end{table}

  If we construct the bijection between rational points on $S$ and integral
  points on $\TtS$ using the geometrically motivated approach of
  \cite[Section~4]{MR2290499}, then we expect to obtain coprimality conditions
  that are encoded in the extended Dynkin diagram.

  Indeed, in the quartic $\Dfour$ \cite{MR2290499}, $\Afour$
  \cite{arXiv:0710.1560} and the cubic $\Dfive$ \cite{arXiv:0807.4733} cases,
  both the extended Dynkin diagram and the counting problem have the special
  form. In the quartic $\Dfive$ \cite{MR2320172} and cubic $\Esix$
  \cite{MR2332351} cases, the extended Dynkin diagram has the shape of
  Figure~\ref{fig:generic_dynkin}, but the coprimality conditions are
  different. The reason is that the bijection between rational points on the
  del Pezzo surface and integral points on a universal torsor is constructed
  by ad-hoc manipulations of the defining equations. If one uses the method of
  \cite[Section~4]{MR2290499} instead, the coprimality conditions turn out in
  the expected shape.
\end{remark}

Given a counting problem of the special form above, we show in the remainder
of this section how to perform a first step towards estimating
$N_{U,H}(B)$. This will result in Proposition~\ref{prop:generic_first_step}.

Our first step can be described as follows, ignoring the coprimality
conditions for the moment. We determine the number of $\beta_0,\gamma_0$
satisfying the torsor equation (\ref{eq:generic_torsor}) while the other
coordinates are fixed. For any $\beta_0$
satisfying \[\congr{\al_0^{a_0}\al_1^{a_1}\cdots \al_r^{a_r}}
{-\beta_0^{b_0}\beta_1^{b_1}\cdots \beta_s^{b_s}}
{\gamma_1^{c_1}\cdots\gamma_t^{c_t}},\] there is a unique $\gamma_0$ such that
(\ref{eq:generic_torsor}) holds. Our assumption that the height conditions are
written as $h(\al_0,\beta_0,\aa,\bb,\cc,\delta;B) \le 1$ (independently of
$\gamma_0$) has the advantage that the number of $\beta_0,\gamma_0$ subject to
(\ref{eq:generic_torsor}) and (\ref{eq:generic_height}) is the number of
integral $\beta_0$ that lie in a certain subset $I$ of the real numbers
described by this height condition and satisfy the congruence above. If
$b_0=1$, one expects that this number is the measure of $I$ divided by the
modulus $\gamma_1^{c_1}\cdots\gamma_t^{c_t}$, with an error of $O(1)$.

Before coming to the details of this argument, we reformulate the coprimality
conditions.

\begin{definition}
  Let
  \begin{align*}
    \Pi(\aa) &= \al_1^{a_1}\cdots \al_r^{a_r}, &
    \Pi'(\delta,\aa)&=
    \begin{cases}
      \delta\al_1\cdots \al_{r-1}, &r \ge 1,\\
      1, &r=0,
    \end{cases}
  \end{align*}
  and we define $\Pi(\bb),\Pi'(\delta,\bb),\Pi(\cc),\Pi'(\delta,\cc)$
  analogously.
\end{definition}

\begin{lemma}
  Assume that $(\al_0, \beta_0, \gamma_0, \aa, \bb, \cc, \delta) \in
  \ZZ^{r+s+t+4}$ satisfies the torsor equation \eqref{eq:generic_torsor}.

  The coprimality conditions described by Figure~\ref{fig:generic_dynkin} hold
  if and only if
  \begin{align}
    \label{eq:gen_cpa} &\cp{\al_0}{\Pi'(\delta,\aa)\Pi(\bb)\Pi(\cc)},\\
    \label{eq:gen_cpb} &\cp{\beta_0}{\Pi'(\delta,\bb)\Pi(\aa)},\\
    \label{eq:gen_cpc} &\cp{\gamma_0}{\Pi'(\delta,\cc)},\\
    \label{eq:gen_cpabc} &\text{coprimality conditions for
      $\aa,\bb,\cc,\delta$ as in Figure~\ref{fig:generic_dynkin} hold.}
  \end{align}
\end{lemma}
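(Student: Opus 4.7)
This is a direct inspection of Figure~\ref{fig:generic_dynkin}. The neighbours of $A_0$ in the diagram are exactly $A_r$, $B_0$ and $C_0$, so the pairwise Dynkin condition for $\al_0$ asserts coprimality with $\delta\al_1\cdots\al_{r-1}\beta_1\cdots\beta_s\gamma_1\cdots\gamma_t$; since coprimality is insensitive to exponents, this is exactly (\ref{eq:gen_cpa}). The conditions (\ref{eq:gen_cpb}) and (\ref{eq:gen_cpc}) arise analogously from the neighbourhoods of $B_0$ and $C_0$, and (\ref{eq:gen_cpabc}) is the restriction of the remaining Dynkin conditions to pairs of vertices among $\aa, \bb, \cc, \delta$.

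\textbf{Reverse direction.} Assume (\ref{eq:gen_cpa})--(\ref{eq:gen_cpabc}) together with (\ref{eq:generic_torsor}). Every pairwise Dynkin coprimality involving $\al_0$ is already encoded in (\ref{eq:gen_cpa}), and every one internal to $\aa, \bb, \cc, \delta$ in (\ref{eq:gen_cpabc}). The conditions that remain to be recovered are $\cp{\beta_0}{\gamma_i}$ for $i = 1, \dots, t$, $\cp{\gamma_0}{\al_i}$ for $i = 1, \dots, r$, $\cp{\gamma_0}{\beta_j}$ for $j = 1, \dots, s$, together with the refined constraint that primes of $\delta$ divide at most one of $\al_0, \beta_0, \gamma_0$. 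The plan is to derive each of these from the elementary three-term observation that, since (\ref{eq:generic_torsor}) expresses a sum of three monomials as $0$, a prime dividing any two of them also divides the third. To establish $\cp{\gamma_0}{\al_i}$ for some $i \ge 1$, for instance, I suppose $p \mid \gamma_0$ and $p \mid \al_i$; then $p$ divides the first and third monomials of (\ref{eq:generic_torsor}), hence also the second, so $p \mid \beta_j$ for some $j \in \{0, \dots, s\}$. If $j \ge 1$, this contradicts $\cp{\al_i}{\beta_j}$ from (\ref{eq:gen_cpabc}); if $j = 0$, it contradicts $\cp{\beta_0}{\al_i}$ from (\ref{eq:gen_cpb}).

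\textbf{Remaining cases and the main obstacle.} The coprimalities $\cp{\gamma_0}{\beta_j}$ and $\cp{\beta_0}{\gamma_i}$ follow by the same argument, with the roles of the three monomials suitably permuted. The $\delta$-triple condition is handled analogously: if $p \mid \delta$ and $p$ divides two of $\al_0, \beta_0, \gamma_0$, then $p$ divides two of the monomials in (\ref{eq:generic_torsor}), forcing $p$ into the third, so $p$ divides some $\gamma_i$; combining this with the coprimality of $\gamma_0$ and $\delta$ from (\ref{eq:gen_cpc}), or of $\gamma_i$ and $\delta$ from (\ref{eq:gen_cpabc}), or of $\al_0$ (respectively $\beta_0$) and $\gamma_i$ from (\ref{eq:gen_cpa}), yields the desired contradiction in every case. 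The argument is elementary throughout; the only technical subtlety, and the step that will require the most care, is the bookkeeping in the degenerate configurations where one or more of $r, s, t$ vanishes, so that $\Pi'(\delta, \cdot)$ collapses to $1$ and $D$ becomes a direct neighbour of one of $A_0, B_0, C_0$, activating additional pairwise-connected triples containing $D$. A case-by-case check in each such configuration confirms that the three-term argument still produces a contradiction.
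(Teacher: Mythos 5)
Your proof is correct and follows essentially the same route as the paper's: the reverse direction reduces to the missing coprimalities $\cp{\beta_0}{\Pi(\cc)}$ and $\cp{\gamma_0}{\Pi(\aa)\Pi(\bb)}$ (plus the $\delta$-triple constraint), each recovered by observing that a prime dividing two monomials of (\ref{eq:generic_torsor}) divides the third and then invoking (\ref{eq:gen_cpa})--(\ref{eq:gen_cpabc}). You are in fact slightly more explicit than the paper, which dispatches the $\delta$-triple condition and the degenerate configurations with ``the remaining statements are proved analogously.''
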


\begin{proof}
  We must show that conditions (\ref{eq:gen_cpa})--(\ref{eq:gen_cpabc})
  together with (\ref{eq:generic_torsor}) imply
  $\cp{\beta_0}{\Pi(\cc)}$ and
  $\cp{\gamma_0}{\Pi(\aa)\Pi(\bb)}$.

  Suppose a prime $p$ divides $\gamma_0,\Pi(\aa)$, i.e., $p$ divides the first
  and third term of (\ref{eq:generic_torsor}). Then $p$ also divides the
  second term, $\beta_0^{b_0}\Pi(\bb)$. However, by (\ref{eq:gen_cpb}) and
  (\ref{eq:gen_cpabc}), we have $\cp{\beta_0^{b_0}\Pi(\bb)}{\Pi(\aa)}$. The
  remaining statements are proved analogously.
\end{proof}

For fixed $B \in \RR_{\ge 3}$ and $(\al_0,\aa,\bb,\cc,\delta) \in \ZZ_*
\times \ZZp^{r+s+t+1}$ subject to (\ref{eq:gen_cpa}), (\ref{eq:gen_cpabc}),
let $N_1=N_1(\al_0, \aa, \bb, \cc, \delta; B)$ be the number of
$\beta_0,\gamma_0$ subject to the torsor equation (\ref{eq:generic_torsor}),
the coprimality conditions (\ref{eq:gen_cpb}), (\ref{eq:gen_cpc}) and the
height condition $h(\al_0,\beta_0,\aa,\bb,\cc,\delta;B) \le 1$. Then
\[N_{U,H}(B) = \sums{(\al_0,\aa,\bb,\cc,\delta) \in \ZZ_* \times
  \ZZp^{r+s+t+1}\\\text{(\ref{eq:gen_cpa}), (\ref{eq:gen_cpabc}) hold}}
N_1(\al_0, \aa, \bb, \cc, \delta; B).\] Our goal is to find an
estimation for $N_1$, with an error term whose sum over $\al_0, \aa, \bb, \cc,
\delta$ is small.

First, we remove (\ref{eq:gen_cpc}) by a M\"obius inversion to obtain that
\[N_1 = \sum_{k_c\div \Pi'(\delta,\cc)} \mu(k_c)
\#\left\{\beta_0,\gamma_0' \in \ZZ \Where
  \begin{aligned}
    &\al_0^{a_0}\Pi(\aa) + \beta_0^{b_0}\Pi(\bb) +
    k_c\gamma_0'\Pi(\cc) = 0,\\
    &\text{(\ref{eq:gen_cpb}), $h(\al_0,\beta_0,\aa,\bb,\cc,\delta;B)
      \le 1$}
  \end{aligned}
\right\}.\] The torsor equation determines $\gamma_0'$ uniquely if a
congruence is fulfilled, so
\[N_1 = \sum_{k_c\div \Pi'(\delta,\cc)} \mu(k_c)
\#\left\{\beta_0 \in \ZZ \Where
  \begin{aligned}
    &\congr{\al_0^{a_0}\Pi(\aa)}{-\beta_0^{b_0}\Pi(\bb)}
    {k_c\Pi(\cc)},\\
    &\text{(\ref{eq:gen_cpb}), $h(\al_0,\beta_0,\aa,\bb,\cc,\delta;B) \le 1$}
  \end{aligned}
\right\}.\] This congruence cannot be fulfilled unless
$\cp{k_c}{\al_0\Pi(\aa)\Pi(\bb)}$. Indeed, if a prime $p$ divides $k_c$ and
$\al_0^{a_0}\Pi(\aa)$, then it divides also $\beta_0^{b_0}\Pi(\bb)$, but
$\cp{\Pi(\aa)}{\beta_0^{b_0}\Pi(\bb)}$ by (\ref{eq:gen_cpb}) and
(\ref{eq:gen_cpabc}), while $\cp{\al_0}{\Pi(\bb)}$ by (\ref{eq:gen_cpa}), and
$p \div k_c, \al_0, \beta_0$ is impossible because of (\ref{eq:gen_cpa}) and
since $p \div \delta, \al_0, \beta_0$ is not allowed by assumption; $p$
dividing $k_c$ and $\Pi(\bb)$ can be excluded similarly.  Therefore, we may
add the restriction $\cp{k_c}{\al_0\Pi(\aa)\Pi(\bb)}$ to the summation over
$k_c$ without changing the result, so that \[N_1 = \sums{k_c\div
  \Pi'(\delta,\cc)\\ \cp{k_c}{\al_0\Pi(\aa)\Pi(\bb)}} \mu(k_c) N_1(k_c),\]
where \[N_1(k_c) = \# \left\{\beta_0 \in \ZZ \Where
  \begin{aligned}
    &\congr{\al_0^{a_0}\Pi(\aa)}{-\beta_0^{b_0}\Pi(\bb)} {k_c\Pi(\cc)}\\
    &\text{(\ref{eq:gen_cpb}), $h(\al_0,\beta_0,\aa,\bb,\cc,\delta;B) \le 1$}
  \end{aligned}
\right\}.\]

We note that both $\al_0^{a_0}\Pi(\aa)$ and $\Pi(\bb)$ are coprime to
$k_c\Pi(\cc)$.  Indeed, we have $\cp{k_c}{\al_0\Pi(\aa)\Pi(\bb)}$ by the
restriction on $k_c$ just introduced, and
$\cp{\Pi(\cc)}{\al_0\Pi(\aa)\Pi(\bb)}$ by (\ref{eq:gen_cpa}) and
(\ref{eq:gen_cpabc}).

We choose integers $A_1,A_2$ resp.\ $B_1,B_2$ depending only on $\al_0,\aa$
resp.\ $\bb$ such that
\begin{equation}\label{eq:choice_AB}
A_1A_2^{b_0} = \al_0^{a_0}\Pi(\aa), \quad B_1B_2^{b_0} = \Pi(\bb).
\end{equation}

For example, \[A_1 = \al_0^{a_0}\Pi(\aa),\quad A_2 = 1,\quad B_1 =
\Pi(\bb),\quad B_2 = 1\] is one valid choice. Often it turns out to be
convenient to move coordinates to $A_2$ that occur to a power of $b_0$ in
$\al_0^{a_0}\Pi(\aa)$; similarly for $B_2$.

Then $A_1,A_2,B_1,B_2$ are coprime to $k_c\Pi(\cc)$.  For each $\beta_0$
satisfying \[\congr{\al_0^{a_0}\Pi(\aa)}{-\beta_0^{b_0}\Pi(\bb)}{k_c\Pi(\cc)}\]
there is a unique $\rho \in \{1, \dots, k_c\Pi(\cc)\}$ satisfying
\begin{equation}\label{eq:generic_rho}
  \cp{\rho}{k_c\Pi(\cc)}, \quad 
  \congr{A_1}{-\rho^{b_0}B_1}{k_c\Pi(\cc)}
\end{equation}
and \[\congr{\beta_0 B_2}{\rho A_2}{k_c\Pi(\cc)}.\]

This shows that \[N_1(k_c) = \sum_{\substack{1 \le \rho \le
    k_c\Pi(\cc)\\\text{(\ref{eq:generic_rho}) holds}}}
\#\left\{\beta_0 \in \ZZ \WHERE
  \begin{aligned}
    &\congr{\beta_0 B_2}{\rho A_2}{k_c\Pi(\cc)}\\
    &\text{(\ref{eq:gen_cpb}), $h(\al_0,\beta_0,\aa,\bb,\cc,\delta;B) \le 1$}
  \end{aligned}
\right\}\]

We remove the coprimality condition~(\ref{eq:gen_cpb}) on $\beta_0$ by another
M\"obius inversion; writing $\beta_0 = k_b\beta_0'$, we get
\[N_1(k_c) = \sums{1 \le \rho \le
    k_c\Pi(\cc)\\\text{(\ref{eq:generic_rho}) holds}}
\sum_{k_b \div \Pi'(\delta,\bb)\Pi(\aa)}
\mu(k_b) N_1(\rho,k_b,k_c)\]
with \[N_1(\rho,k_b,k_c) = \#\left\{\beta_0' \in \ZZ \Where
  \begin{aligned}
    &\congr{k_b\beta_0'B_2}{\rho A_2}{k_c\Pi(\cc)}\\
    &h(\al_0,k_b\beta_0',\aa,\bb,\cc,\delta;B) \le 1
  \end{aligned}
\right\}.\] Here, we may restrict to $k_b$ satisfying $\cp{k_b}{k_c\Pi(\cc)}$
because otherwise $\cp{\rho A_2}{k_c\Pi(\cc)}$ implies that
$N_1(\rho,k_b,k_c)=0$.  We note that we have
$\cp{k_bB_2}{k_c\Pi(\cc)}$ after this restriction.

We recall that $\{t \in \RR \where h(\al_0, t, \aa, \bb, \cc, \delta;B) \le
1\}$ is assumed to consist of intervals $I_1, \dots, I_n$, with $I_j$ starting
at $t_{0,j}$ and ending at $t_{1,j}$. Let $\psi(t) = \{t\} - 1/2$, where
$\{t\}$ is the fractional part of $t \in \RR$. For $j = 1, \dots, n$, by
\cite[Lemma~3]{MR2320172},
\begin{multline*}
  \#\left\{\beta_0' \in \ZZ \Where
    \begin{aligned}
      &\congr{k_b \beta_0'B_2}{\rho A_2}{k_c\Pi(\cc)},\\
      &k_b\beta_0' \in I_j
    \end{aligned}
  \right\}\\ = \frac{t_{1,j}-t_{0,j}}{k_b k_c\Pi(\cc)} +
  \psi\left(\frac{k_b^{-1} t_{0,j} -\rho A_2 \overline{k_b
        B_2}}{k_c\Pi(\cc)}\right) - \psi\left(\frac{k_b^{-1} t_{1,j} - \rho
      A_2\overline{k_b B_2}}{k_c\Pi(\cc)}\right),
\end{multline*}
where $t_{0,j}, t_{1,j}$ (depending on $\al_0,\aa,\bb,\cc,\delta$ and $B$) are
the start and end points of $I_j$, and $\overline{x}$ is the multiplicative
inverse modulo $k_c\Pi(\cc)$ of an integer $x$ coprime to $k_c\Pi(\cc)$.

We define
\begin{equation}\label{eq:generic_V0}
  V_1(\al_0,\aa,\bb,\cc,\delta;B) = 
  \int_{h(\al_0,t,\aa,\bb,\cc,\delta;B) \le 1} \frac{1}{\Pi(\cc)} \dd t.
\end{equation}
The sum of the lengths of the intervals $I_1, \dots, I_n$ is
$\Pi(\cc)V_1(\al_0,\aa,\bb,\cc,\delta;B)$, so
\[N_1(\rho,k_b,k_c) = \frac{1}{k_bk_c} V_1(\al_0,\aa,\bb,\cc,\delta;B)
+ R_1(\rho,k_b,k_c),\] with
\begin{equation*}
  R_1(\rho,k_b,k_c) = \sum_{j=1}^n \sum_{i \in \{0,1\}}
  (-1)^i \psi\left(\frac{k_b^{-1} t_{i,j} -\rho A_2 \overline{k_b B_2}}
    {k_c\Pi(\cc)}\right)
\end{equation*}

Tracing through the argument gives the following estimation for $N_{U,H}(B)$,
where, for any $n \in \ZZp$, $\phis(n)=\frac{\phi(n)}{n} =
\prodpd{n}{\left(1-1/p\right)}$ and $\omega(n)$ is the number of distinct prime
factors of $n$.

\begin{prop}\label{prop:generic_first_step}
  If the counting problem has the special form described at the beginning of
  this section, then
  \begin{equation*}
    N_{U,H}(B) = \sums{(\al_0,\aa,\bb,\cc,\delta) \in \ZZ_* \times
      \ZZp^{r+s+t+1}\\\text{(\ref{eq:gen_cpa}), (\ref{eq:gen_cpabc}) holds}}
    N_1, 
  \end{equation*}
  with \[N_1 = \theta_1(\al_0, \aa, \bb, \cc, \delta)
  V_1(\al_0,\aa,\bb,\cc,\delta;B) + R_1(\al_0, \aa, \bb, \cc, \delta;B),\]
  where $V_1$ is defined by (\ref{eq:generic_V0}) and, with
  $A_1,A_2,B_1,B_2$ as in (\ref{eq:choice_AB}),
  \begin{multline*}
    \theta_1(\al_0, \aa, \bb, \cc, \delta)\\ = \sums{k_c\div
      \Pi'(\delta,\cc)\\\cp{k_c}{\al_0\Pi(\aa)\Pi(\bb)}}
    \frac{\mu(k_c)\phis(\Pi'(\delta,\bb)\Pi(\aa))}{k_c\phis(\gcd(\Pi'(\delta,\bb),k_c\Pi(\cc)))}
    \sums{1 \le \rho \le
      k_c\Pi(\cc)\\\text{(\ref{eq:generic_rho}) holds}} 1
  \end{multline*}
  and
  \begin{multline*}
    R_1(\al_0,\aa,\bb,\cc,\delta;B) = \sums{k_c \div
      \Pi'(\delta,\cc)\\\cp{k_c}{\al_0\Pi(\aa)\Pi(\bb)}} \mu(k_c)
    \sums{k_b \div \Pi'(\delta,\bb)\Pi(\aa)\\\cp{k_b}{k_c\Pi(\cc)}} \mu(k_b)\\
    \times \sums{1 \le \rho \le k_c\Pi(\cc)\\\text{(\ref{eq:generic_rho})
        holds}}
    \sum_{j=1}^n \sum_{i \in \{0,1\}} (-1)^i \psi\left(\frac{k_b^{-1}
        t_{i,j} -\rho A_2 \overline{k_b B_2}} {k_c\Pi(\cc)}\right).
  \end{multline*}

  We have $R_1(\al_0, \aa, \bb, \cc, \delta;B)=0$ if
  $h(\al_0,t,\aa,\bb,\cc,\delta;B)>1$ for all $t\in \RR$, while
  \[R_1(\al_0, \aa, \bb, \cc, \delta;B) \\\ll
  2^{\omega(\Pi'(\delta,\cc))}2^{\omega(\Pi'(\delta,\bb)\Pi(\aa))}
  b_0^{\omega(\delta\Pi(\cc))}\] otherwise.
\end{prop}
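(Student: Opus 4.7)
The plan is to assemble the formula by collecting the main and remainder contributions produced in the discussion above and then to establish the bound on $R_1$.

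Starting from
\[
N_1(\rho,k_b,k_c) = \frac{V_1(\al_0,\aa,\bb,\cc,\delta;B)}{k_b k_c} + R_1(\rho,k_b,k_c),
\]
which results from applying \cite[Lemma~3]{MR2320172} to each interval $I_j$ and summing over $j$ (the main term collapses because the sum of the lengths $t_{1,j}-t_{0,j}$ equals $\Pi(\cc)V_1$ by the definition of $V_1$), I would multiply by $\mu(k_b)\mu(k_c)$ and sum over $\rho$, $k_b$ and $k_c$. The error-term contribution is then precisely the triple sum defining $R_1$ in the statement. For the main term, the only non-trivial step is the evaluation of the inner Möbius sum
\[
\sums{k_b \div \Pi'(\delta,\bb)\Pi(\aa)\\ \cp{k_b}{k_c\Pi(\cc)}}
\frac{\mu(k_b)}{k_b}
= \prods{p\div \Pi'(\delta,\bb)\Pi(\aa)\\ p \ndiv k_c\Pi(\cc)}{\left(1-\frac{1}{p}\right)}
= \frac{\phis(\Pi'(\delta,\bb)\Pi(\aa))}{\phis(\gcd(\Pi'(\delta,\bb)\Pi(\aa),\,k_c\Pi(\cc)))}.
\]
To obtain the denominator displayed in the statement, I would invoke the restriction $\cp{k_c}{\Pi(\aa)}$ together with the coprimality relations encoded in Figure~\ref{fig:generic_dynkin} (which imply $\cp{\Pi(\aa)}{\Pi(\cc)}$) to simplify $\gcd(\Pi'(\delta,\bb)\Pi(\aa),k_c\Pi(\cc))=\gcd(\Pi'(\delta,\bb),k_c\Pi(\cc))$.

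For the bound on $R_1$, the case $R_1=0$ is immediate, as $n$ may then be taken to be $0$ and the $j$-sum is empty. Otherwise I would estimate trivially, using $|\psi|\le 1/2$ and the fact that $n$ is bounded independently of all variables. The sums over $k_b$ and $k_c$ are supported on squarefree divisors (by the Möbius factor), yielding the combinatorial factors $2^{\omega(\Pi'(\delta,\bb)\Pi(\aa))}$ and $2^{\omega(\Pi'(\delta,\cc))}$. The step demanding the most care is bounding the number of admissible $\rho$: since $A_1,B_1$ are coprime to $k_c\Pi(\cc)$, condition (\ref{eq:generic_rho}) reads $\rho^{b_0}\equiv -A_1\overline{B_1}\pmod{k_c\Pi(\cc)}$ with a unit on the right, so the coprimality requirement $\cp{\rho}{k_c\Pi(\cc)}$ is automatic. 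Applying the Chinese remainder theorem and using the structure of $(\ZZ/p^e\ZZ)^*$ (cyclic for odd $p$, and $\ZZ/2\ZZ\times\ZZ/2^{e-2}\ZZ$ for $p=2$ and $e\ge 3$), the number of $b_0$-th roots of a given unit modulo $p^e$ is at most $b_0$ for odd $p$ and at most $2b_0$ for $p=2$, so the total number of admissible $\rho$ is $\ll b_0^{\omega(k_c\Pi(\cc))}$. Finally, every prime dividing $k_c\Pi(\cc)$ divides $\delta\Pi(\cc)$ (since $k_c\div\Pi'(\delta,\cc)\div\delta\Pi(\cc)$), whence $\omega(k_c\Pi(\cc))\le\omega(\delta\Pi(\cc))$, and combining the three contributions produces the stated bound.
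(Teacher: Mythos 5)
Your proposal is correct and follows essentially the same route as the paper's proof: assemble the main and error terms from the preceding discussion, evaluate the $k_b$-M\"obius sum as $\phis(\Pi'(\delta,\bb)\Pi(\aa))/\phis(\gcd(\Pi'(\delta,\bb)\Pi(\aa),k_c\Pi(\cc)))$ and simplify the gcd via $\cp{\Pi(\aa)}{k_c\Pi(\cc)}$, then bound $R_1$ trivially using the squarefree supports of the divisor sums and the $\ll b_0^{\omega(k_c\Pi(\cc))}$ count of admissible $\rho$. Your justification of that $\rho$-count (reduction to $b_0$-th roots of a unit via the coprimality of $A_1,B_1$ to the modulus, then CRT and the structure of the unit groups) is in fact slightly more detailed than what the paper records.
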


\begin{proof}
  For the main term, we note that $\theta_1$ is \begin{multline*}
    \sums{k_c \div \Pi'(\delta,\cc)\\
      \cp{k_c}{\al_0\Pi(\aa)\Pi(\bb)}} \frac{\mu(k_c)}{k_c} \sums{1 \le \rho
      \le k_c\Pi(\cc)\\\text{(\ref{eq:generic_rho}) holds}}
    \sums{k_b\div \Pi'(\delta,\bb)\Pi(\aa)\\
      \cp{k_b}{k_c\Pi(\cc)}} \frac{\mu(k_b)}{k_b}\\
    = \sums{k_c \div \Pi'(\delta,\cc)\\ \cp{k_c}{\al_0\Pi(\aa)\Pi(\bb)}}
    \frac{\mu(k_c)\phis(\Pi'(\delta,\bb)\Pi(\aa))}
    {k_c\phis(\gcd(\Pi'(\delta,\bb)\Pi(\aa),k_c\Pi(\cc)))}
    \sum_{\substack{1 \le \rho \le k_c\Pi(\cc)\\\text{(\ref{eq:generic_rho})
          holds}}} 1
  \end{multline*} and use $\cp{\Pi(\aa)}{k_c\Pi(\cc)}$ by
  (\ref{eq:gen_cpabc}) and the assumption on $k_c$.
  
  Our discussion before the statement of this result immediately gives the
  explicit formula for the error term $R_1$. Additionally, we note that both
  $N_1$ and $V_1$ vanish if $h(\al_0,t,\aa,\bb,\cc,\delta)>1$ for all $t \in
  \RR$. Otherwise, we estimate the inner sums over $j,i$ by $O(1)$. The total
  error is
  \begin{equation*}
    \begin{split}
      \ll{}& \sum_{k_c \div \Pi'(\delta,\cc)} |\mu(k_c)| \sum_{k_b\div
        \Pi'(\delta,\bb)\Pi(\aa)} |\mu(k_b)| b_0^{\omega(k_c\Pi(\cc))}\\
      \ll{}& 2^{\omega(\Pi'(\delta,\cc))} 2^{\omega(\Pi'(\delta,\bb)\Pi(\aa))}
      b_0^{\omega(\delta\Pi(\cc))}.
    \end{split}
  \end{equation*}
  since (\ref{eq:generic_rho}) has at most $b_0^{\omega(k_c\Pi(\cc))}$
  solutions $\rho$ with $1 \le \rho \le k_c\Pi(\cc)$.
\end{proof}

In this estimation of $N_1$, we expect that $\theta_1V_1$ is the main term and
$R_1$ is the error term. It is sometimes possible (see
Lemma~\ref{lem:sum_a1a3_first} for an example) to show that the crude bound
for $R_1$ at the end of Proposition~\ref{prop:generic_first_step} summed over
all $\al_0,\aa,\bb,\cc, \delta$ for which there is a $t \in \RR$ with
$h(\al_0,t,\aa,\bb,\cc,\delta;B) \le 1$ gives a total contribution of
$o(B(\log B)^{k-1})$. In other cases, this is impossible, and one has to show
that there is additional cancellation when summing the precise expression for
$R_1$ of Proposition~\ref{prop:generic_first_step} over the remaining
variables (see \cite{arXiv:0807.4733}, for example).

\section{Another summation}\label{sec:generic_errors}

As the main result of this section, we show under certain conditions how to
sum an expression such as the main term of
Proposition~\ref{prop:generic_first_step} over another coordinate
(Proposition~\ref{prop:complete_summation_1} and
Proposition~\ref{prop:complete_summation_2}).

In this section, we will start to define several sets $\Theta_i$ of
real-valued functions in one variable and, for any $r \in \ZZp$, several sets
$\Theta_{j,r}$ and $\Theta_{j,r}'$ of real-valued functions in $r$ variables.
We will be interested in the average order of these functions when summed over
intervals.

Figure~\ref{fig:functions} gives an overview of the relations between these
sets of functions, for appropriate constants $C, C', C'', C_1, C_2, C_3 \in
\RRnn$ and $b \in \ZZp$, where each arrow denotes an inclusion. In case of an
arrow from a set $\Theta_{j,r}$ to a set $\Theta_i$, we regard the functions
in the first set as functions in one of the variables.

\begin{figure}[ht]
\begin{equation*}
  \xymatrix{
    \Thetabox{\Theta_{0,r}(0)}{\ref{def:function_bound}}
    &
    &
    \\
    \Thetabox{\Theta_{1,r}(C,\e_r)}{\ref{def:arithmetic_function_r_with_average}}
    \injref{u}{Def.~\ref{def:arithmetic_function_r_with_average}}
    \ar @/_3.5pc/[dd]_{\text{Def.~\ref{def:arithmetic_function_r_with_average}}}
    &
    \Thetabox{\Theta_{3,r}}{\ref{def:arithmetic_function_r}}
    \ar@{->}[ddr]^(0.80){\text{Def.~\ref{def:arithmetic_function_r}}}
    &
    \Thetabox{\Theta'_{3,r}}{\ref{def:arithmetic_function_r_simple}}
    \injref{l}{Def.~\ref{def:arithmetic_function_r_simple}}
    \\
    \Thetabox{\Theta_{2,r}(C)}{\ref{def:arithmetic_function_r_summable}}
    \injrefff{u}{Def.~\ref{def:arithmetic_function_r_summable}}
    &
    \Thetabox{\Theta_{4,r}(C')}{\ref{def:arithmetic_function_r_small}}
    \injref{l}{Cor.~\ref{cor:arithmetic_function_r_small_has_average}}
    \injreff{u}{Def.~\ref{def:arithmetic_function_r_small}}
    \injref{d}{Lem.~\ref{lem:arithmetic_function_r_small_is_small}}
    &
    \Thetabox{\Theta'_{4,r}(C'')}{\ref{def:arithmetic_function_r_simple_small}}
    \injref{l}{Cor.~\ref{cor:arithmetic_function_r_simple_small_is_small}}
    \injref{u}{Def.~\ref{def:arithmetic_function_r_simple_small}}
    \\
    \Thetabox{\Theta_0(C_2)}{\ref{def:arithmetic_function_with_average}}
    &
    \Thetabox{\Theta_2(b,C_1,C_2,C_3)}{\ref{def:function_small}}
    \injreff{r}{Def.~\ref{def:function_small}}
    \injref{l}{Cor.~\ref{cor:function_small_has_average}}
   &
    \Thetabox{\Theta_1}{\ref{def:function_product}}
  }
\end{equation*}
  \caption{Relations between our sets of functions}
  \label{fig:functions}
\end{figure}

\begin{lemma}\label{lem:sum_product_arithmetic_continuous}
  Let $\theta : \ZZ \to \RR$ be any function for which there exist $c \in
  \RRnn$ and a function $E: \RR \to \RR$ such that, for all $t \in \RRnn$,
  \begin{equation*}
    \sum_{0 < n \le t} \theta(n) = ct + E(t).
  \end{equation*}
  Let $t_1, t_2 \in \RRnn$, with $t_1 \le t_2$. Let $g:[t_1,t_2] \to \RR$ be a
  function that has a continuous derivative whose sign changes only $R(g)$
  times on $[t_1,t_2]$. Then
  \begin{multline*}
    \sum_{t_1 < n \le t_2} \theta(n)g(n)\\ = c \int_{t_1}^{t_2} g(t) \dd t +
    O\left((R(g)+1)\left(\sup_{t_1 \le t \le t_2} |E(t)|\right) \left(\sup_{t_1
          \le t \le t_2} |g(t)|\right)\right).
  \end{multline*}
\end{lemma}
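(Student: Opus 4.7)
The plan is to apply Abel (partial) summation with the weight $\theta$ and the smooth function $g$, and then exploit the hypothesis on the sign changes of $g'$ to control the resulting integral against $E$.

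Set $T(t) = \sum_{0 < n \le t} \theta(n) = ct + E(t)$. Since $T$ is a step function and $g$ has a continuous derivative on $[t_1,t_2]$, Abel summation gives
\begin{equation*}
  \sum_{t_1 < n \le t_2} \theta(n) g(n) = T(t_2) g(t_2) - T(t_1) g(t_1) - \int_{t_1}^{t_2} T(t) g'(t) \dd t.
\end{equation*}
Substituting $T(t) = ct + E(t)$ and applying ordinary integration by parts to the polynomial part yields
\begin{equation*}
  c t_2 g(t_2) - c t_1 g(t_1) - c\int_{t_1}^{t_2} t\, g'(t)\dd t = c\int_{t_1}^{t_2} g(t)\dd t,
\end{equation*}
so the main term of the lemma emerges. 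It remains to bound the contribution of $E$, which equals
\begin{equation*}
  E(t_2) g(t_2) - E(t_1) g(t_1) - \int_{t_1}^{t_2} E(t) g'(t) \dd t.
\end{equation*}

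The two boundary terms are each at most $(\sup|E|)(\sup|g|)$, which is acceptable. For the integral, I would use the hypothesis that $g'$ changes sign at most $R(g)$ times: this partitions $[t_1,t_2]$ into at most $R(g)+1$ subintervals $[u_\nu,v_\nu]$ on each of which $g'$ has constant sign. On such a subinterval,
\begin{equation*}
  \left|\int_{u_\nu}^{v_\nu} E(t) g'(t)\dd t\right| \le \left(\sup_{t_1 \le t \le t_2}|E(t)|\right) \left|\int_{u_\nu}^{v_\nu} g'(t)\dd t\right| = \left(\sup|E|\right)|g(v_\nu)-g(u_\nu)|,
\end{equation*}
which is at most $2(\sup|E|)(\sup|g|)$. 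Summing over the at most $R(g)+1$ subintervals gives the claimed bound $O\bigl((R(g)+1)(\sup|E|)(\sup|g|)\bigr)$, which absorbs the boundary contributions as well.

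The argument is essentially a routine application of partial summation; the only mildly delicate point is extracting the factor $R(g)+1$ in the integral bound, which is handled cleanly by the sign-change hypothesis on $g'$ together with the elementary estimate $|\int g' \dd t| = |g(v)-g(u)| \le 2\sup|g|$ on each monotonic piece. I do not anticipate any serious obstacle.
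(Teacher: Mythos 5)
Your proof is correct and follows essentially the same route as the paper: partial summation with $T(t)=ct+E(t)$, integration by parts to recover the main term $c\int_{t_1}^{t_2}g(t)\dd t$, and then bounding $\int_{t_1}^{t_2}E(t)g'(t)\dd t$ by splitting $[t_1,t_2]$ into the $R(g)+1$ subintervals on which $g'$ has constant sign, so that $\int|g'|=|g(v)-g(u)|\le 2\sup|g|$ on each piece. No gaps.
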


\begin{proof}
  The proof is similar to \cite[Lemma~2]{arXiv:0710.1560}.  For any $t \in
  \RRnn$, let \[M(t) = \sum_{0 < n\le t} \theta(n),\qquad S(t_1,t_2) =
  \sum_{t_1 \le n \le t_2} \theta(n)g(n).\] Using partial summation, the estimate
  for $M(t)$ and integration by parts, $S(t_1,t_2)$ is 
  \begin{equation*}
    \begin{split}
       & M(t_2)g(t_2) - M(t_1)g(t_1) - \int_{t_1}^{t_2} M(t) g'(t)
      \dd t\\
      ={}& c \int_{t_1}^{t_2} g(t) \dd t +
      E(t_2)g(t_2)-E(t_1)g(t_1)-\int_{t_1}^{t_2} E(t) g'(t) \dd t\\
      ={}& c \int_{t_1}^{t_2} g(t) \dd t + O\left(\left(\sup_{t_1 \le t \le t_2}
          |E(t)|\right)\left(|g(t_1)|+|g(t_2)|+\int_{t_1}^{t_2} |g'(t)| \dd
          t\right)\right).
    \end{split}
  \end{equation*}
  The result follows once we split $[t_1,t_2]$ into $R(g)+1$ intervals
  where the sign of $g'$ does not change.
\end{proof}

\begin{definition}\label{def:function_bound}
  Let $C \in \RRnn$. Let $\Theta_{0,0}(C)$ be the set $\RR$ of real
  numbers. For any $r \in \ZZp$, we define $\Theta_{0,r}(C)$ recursively as
  the set of all non-negative functions $\theta: \ZZp^{r} \to \RR$ with the
  following property. For any $i \in \{1, \dots, r\}$, there is $\theta_i \in
  \Theta_{0,r-1}(C)$ such that, for any $t \in \RRnn$,
 \[\sum_{0 < \e_i \le t} \theta(\e_1, \dots,
  \e_r) \le \theta_i(\e_1, \dots, \e_{i-1}, \e_{i+1},\dots,\e_r)\cdot t(\log
  (t+2))^C.\]

  For any $\theta \in \Theta_{0,r}(C)$ and $i=1, \dots, r$, we fix a function
  $\theta_i \in \Theta_{0,r-1}(C)$ as above and denote it by $\M(\theta(\e_1,
  \dots, \e_r),\e_i)$. For any pairwise distinct $i_1, \dots, i_n \in \{1,
  \dots, r\}$, let
  \begin{multline*}
    \M(\theta(\e_1, \dots, \e_r),
    \e_{i_1}, \dots, \e_{i_n})\\ = \M(\dots\M(\theta(\e_1, \dots,
    \e_r),\e_{i_1})\dots,\e_{i_n}) \in \Theta_{0,r-n}(C).
  \end{multline*}
  For any $t \in \RRnn$, we have
  \begin{equation*}
    \sum_{0 < \e_{i_1}, \dots, \e_{i_n} \le t} \theta(\e_1, \dots, \e_r)
    \le \M(\theta(\e_1, \dots, \e_r),\e_{i_1},\dots,\e_{i_n}) t^n(\log(t+2))^{nC}.
  \end{equation*}
\end{definition}

\begin{example}\label{exa:phis_phid_omega}
  For any $n \in \ZZp$, let \[\phis(n)=\frac{\phi(n)}{n} =
  \prodpdp{n}{1-\frac 1 p}, \qquad
  \phid(n)= \prodpdp{n}{1+\frac 1 p}.\]

  Let $C \in \ZZnn$. For any $t \in \RRnn$, we have
  \begin{equation*}
    \sum_{0 < n \le t} (\phis(n))^C \le \sum_{0 < n \le t} (\phid(n))^C \ll_C t
  \end{equation*}
  (cf. \cite[Equation~3.1]{arXiv:0710.1560}) and
  \begin{equation*}
    \sum_{0 < n \le t} (1+C)^{\omega(n)} \ll_C t(\log(t+2))^C
  \end{equation*}
  (cf.\ \cite[Section~5.1]{MR2320172}).

  Therefore, for any $C \in \ZZnn$ and $r \in
  \ZZp$,
  \begin{equation*}
    \prod_{i=1}^r(\phis(\e_i))^C \in \Theta_{0,r}(0),\quad \prod_{i=1}^r
    (\phid(\e_i))^C \in \Theta_{0,r}(0),\quad
    \prod_{i=1}^r(1+C)^{\omega(\e_i)} \in \Theta_{0,r}(C).
  \end{equation*}
\end{example}

\begin{lemma}\label{lem:sum_over_n}
  Let $C \in \RRnn$. Let $\theta:\ZZ \to \RR$ be a non-negative function such
  that, for any $t \in \RRnn$, we have $\sum_{0 < n \le t} \theta(n) \le t
  (\log(t+2))^{C}$.

  Let $t_1 \le t_2 \in \RRnn$, $\kappa \in \RR$. Then
  \begin{equation*}
    \sum_{t_1 < n \le t_2} \frac{\theta(n)}{n^\kappa} \ll_{C,\kappa}
    \begin{cases}
      t_2^{1-\kappa}(\log(t_2+2))^{C}, &\kappa<1,\\
      (\log(t_2+2))^{C+1}, &\kappa=1,\\
      \frac{(\log(t_1+2))^{C}}{t_1^{\kappa-1}} \ll_{C,\kappa} 1, &\kappa>1.
    \end{cases}
\end{equation*}
\end{lemma}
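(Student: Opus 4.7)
The plan is to apply Abel summation to the sum, using the partial-sum bound $M(t) := \sum_{0 < n \le t} \theta(n) \le t(\log(t+2))^C$ supplied by the hypothesis. Since $M(t) = 0$ for $t < 1$, any sum of the form $\sum_{t_1 < n \le t_2}$ with $t_1 \in [0,1)$ agrees with the corresponding one starting at $t_1 = 1/2$; I will therefore assume $t_1 \ge 1/2$ throughout, which also ensures that the upper bounds below are finite.

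Abel summation in Stieltjes form yields
\begin{equation*}
\sum_{t_1 < n \le t_2} \frac{\theta(n)}{n^\kappa} = \frac{M(t_2)}{t_2^\kappa} - \frac{M(t_1)}{t_1^\kappa} + \kappa \int_{t_1}^{t_2} \frac{M(t)}{t^{\kappa+1}}\dd t.
\end{equation*}
Substituting the bound on $M$, each boundary term is at most $t_i^{1-\kappa}(\log(t_i+2))^C$, and the integral is controlled by $\kappa \int_{t_1}^{t_2}(\log(t+2))^C t^{-\kappa}\dd t$. The rest is case analysis on $\kappa$: for $\kappa < 1$ the integrand grows and the integral is $\ll_{C,\kappa} t_2^{1-\kappa}(\log(t_2+2))^C$, which dominates the boundary term at $t_1$; for $\kappa = 1$ the substitution $u = \log(t+2)$ turns the integral into $\int u^C\,\dd u$, giving $(\log(t_2+2))^{C+1}$ and absorbing the boundary terms; for $\kappa > 1$ a single integration by parts (or the substitution $u = \log(t+2)$ and comparison with an incomplete Gamma function) yields $\ll_{C,\kappa} (\log(t_1+2))^C/t_1^{\kappa-1}$, which matches the boundary term at $t_1$ and dominates the one at $t_2$.

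To finish the case $\kappa > 1$, I need to check that $(\log(t_1+2))^C/t_1^{\kappa-1} \ll_{C,\kappa} 1$. This is immediate: the function is continuous on $[1/2,\infty)$ and tends to $0$ as $t_1 \to \infty$, so attains a maximum depending only on $C$ and $\kappa$. There is no real obstacle in the argument — it is a routine application of partial summation — and the only subtle point is the edge case $t_1 \in [0,1)$, handled by the initial reduction. The small amount of care required is simply bookkeeping: in each of the three regimes, verifying that the integral and whichever of the two boundary terms is larger combine to give exactly the sharper form asserted in the statement.
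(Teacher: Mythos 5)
Your proposal is correct and follows essentially the same route as the paper: partial (Abel) summation against $M(t)\le t(\log(t+2))^C$, then a case analysis on $\kappa$ in which the integral $\int_{t_1}^{t_2}(\log(t+2))^C t^{-\kappa}\dd t$ is bounded by integration by parts (the paper phrases this as an induction on $C$). Your explicit reduction to $t_1\ge 1/2$ is a sensible touch that the paper leaves implicit.
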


\begin{proof}
  Let $S$ be the sum that we want to estimate. Let $M(t)=\sum_{0 < n \le t}
  \theta(n)$.

  By partial summation, 
  \begin{multline*}
    S = \frac{M(t_2)}{t_2^\kappa} - \frac{M(t_1)}{t_1^\kappa} -
    \int_{t_1}^{t_2} (-\kappa)\frac{M(t)}{t^{\kappa+1}} \dd t\\ \ll_{\kappa}
    \frac{(\log(t_2+2))^{C}}{t_2^{\kappa-1}} + \frac{(\log
      (t_1+2))^{C}}{t_1^{\kappa-1}} + \int_{t_1}^{t_2}
    \frac{(\log(t+2))^{C}}{t^\kappa} \dd t.
  \end{multline*}
  If $\kappa=1$, the result follows from \[\int_{t_1}^{t_2}
  \frac{(\log(t+2))^{C}}{t} \dd t =
  \frac{(\log(t_2+2))^{C+1}-(\log(t_1+2))^{C+1}}{C+1}.\] For $\kappa \ne
  1$, the result follows by induction over $C$ from
  \begin{multline*}
    \int_{t_1}^{t_2} \frac{(\log(t+1))^{C}}{t^\kappa} \dd t\\
    \ll_{C,\kappa} \frac{(\log(t_2+1))^{C}}{t_2^{\kappa-1}} +
    \frac{(\log(t_1+1))^{C}}{t_1^{\kappa-1}} + \int_{t_1}^{t_2}
    \frac{(\log(t+1))^{C-1}}{t^\kappa} \dd t,
  \end{multline*}
  which is obtained using integration by parts. Depending on whether
  $\kappa<1$ or $\kappa>1$, the first or second term gives the main
  contribution.
\end{proof}

Now we come to the setup for the main result of this section. Let $r,s \in
\ZZnn$. We consider a non-negative function $V: \RRnn^{r+s+1} \times \RR_{\ge
  3} \to \RR$ with the following properties. We assume that, for $j =1, \dots,
s$, there are
\begin{equation*}
  k_{0,j}, \dots, k_{r+j-1,j} \in \RR,\quad k_{r+j,j} \in \RRnz,\quad k_{r+j+1,j},
  \dots, k_{r+s,j} = 0, \quad a_j \in \RRp
\end{equation*}
such that
\begin{equation}\label{eq:generic_Vbound}
V(\e_0,\dots,\e_{r+s};B) \ll
\frac{B^{1-A}}{\e_0^{1-A_0}\cdots \e_{r+s}^{1-A_{r+s}}},
\end{equation}
where we define, for $i = 0, \dots, r+s$,
\[A=\sum_{j=1}^s a_j,\quad A_i=\sum_{j=1}^s a_jk_{i,j}.\]

We also assume that $V(\e_0,\dots,\e_{r+s};B)=0$ unless both
\begin{equation}\label{eq:height_s}
\e_0^{k_{0,j}}\cdots
\e_{r+s}^{k_{r+s,j}}=\e_0^{k_{0,j}}\cdots\e_{r+j}^{k_{r+j,j}} \le B,
\end{equation}
for $j=1, \dots, s$, and
\begin{equation}\label{eq:height_r}
  1 \le \e_i \le B,
\end{equation}
for $i=1, \dots, r$.

\begin{remark}\label{rem:implied_constants}
  In (\ref{eq:generic_Vbound}) and for the remainder of this section, we use
  the convention that all implied constants (in the notation $\ll$ and
  $O(\dots)$) are independent of $\e_0, \dots, \e_{r+s}$ and $B$, but may
  depend on all other parameters, in particular on $V$ and $\theta$.
\end{remark}

\begin{lemma}\label{lem:summation_V_theta_bounds}
  In the situation described above, let $\theta \in \Theta_{0,r+s+1}(C)$ for
  some $C \in \ZZnn$. Then
  \begin{multline*}
    \sum_{\e_1, \dots, \e_{r+s}}\theta(\e_0, \dots, \e_{r+s})V(\e_0, \dots,
    \e_{r+s};B)\\ \ll \e_0^{-1} \M(\theta(\e_0, \dots, \e_{r+s}),\e_{r+s},
    \dots, \e_1) B(\log B)^{r+(r+s)C}.
  \end{multline*}
\end{lemma}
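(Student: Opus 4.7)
The plan is to perform the $r+s$ summations successively in the order $\e_{r+s}, \e_{r+s-1}, \ldots, \e_1$, applying Lemma~\ref{lem:sum_over_n} at each step to peel off one variable. I will establish by downward induction on $j$ that summation over $\e_{r+j}$ reduces the situation to one with $s$ replaced by $s-1$, picking up a factor $\M(\theta, \e_{r+j})\cdot(\log B)^C$.

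\textbf{Reduction step over $\e_{r+s}$.} By the structural hypothesis on $V$ we have $k_{r+s, j}=0$ for $j<s$, so $\e_{r+s}$ appears only in the $s$-th height condition from (\ref{eq:height_s}) and is otherwise subject to $\e_{r+s}\ge 1$; this confines $\e_{r+s}$ to an interval. Pulling out the $\e_{r+s}$-independent factor in (\ref{eq:generic_Vbound}) and using Definition~\ref{def:function_bound} (treating $\e_0,\ldots,\e_{r+s-1}$ as parameters), Lemma~\ref{lem:sum_over_n} applies with $\kappa=1-A_{r+s}$. The key identity
\[
\frac{A_{r+s}}{k_{r+s,s}}=a_s\quad\text{(since $k_{r+s,j}=0$ for $j<s$)}
\]
ensures that the endpoint-power produced by the Lemma, once multiplied with the prefactor, reproduces (\ref{eq:generic_Vbound}) with $A$ replaced by $A-a_s$ and each $A_i$ by $A_i-a_s k_{i,s}$, multiplied by $\M(\theta,\e_{r+s})\cdot(\log B)^C$. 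Note $\kappa=1$ never occurs, since $A_{r+s}=a_sk_{r+s,s}\ne 0$.

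\textbf{Iteration and free variables.} Iterating the reduction $s$ times, the exponents telescope: $A^{(s)}=A-\sum_{j=1}^{s}a_j=0$ and, for $i\le r$, $A_i^{(s)}=A_i-\sum_{j=1}^{s}a_jk_{i,j}=0$. What remains is
\[
\M(\theta,\e_{r+s},\ldots,\e_{r+1})\cdot\frac{B}{\e_0\e_1\cdots\e_r}\cdot(\log B)^{sC}.
\]
The remaining summations over $\e_r,\ldots,\e_1$ range over $[1,B]$ by (\ref{eq:height_r}) with $\kappa=1$ in Lemma~\ref{lem:sum_over_n}. Each contributes $(\log(B+2))^{C+1}\ll(\log B)^{C+1}$ together with one further $\M$-reduction. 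After all $r$ of these the total log exponent becomes $sC+r(C+1)=r+(r+s)C$, and what is left of the volume bound is exactly $\e_0^{-1}B$ times $\M(\theta,\e_{r+s},\ldots,\e_1)$.

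\textbf{Main obstacle.} The delicate point is the case analysis in the reduction step: the sign of $k_{r+j,j}$ determines whether the $j$-th height condition gives an upper or lower bound on $\e_{r+j}$, which in turn determines whether $\kappa<1$ or $\kappa>1$ in Lemma~\ref{lem:sum_over_n}; one must also separately handle the subcase when the endpoint supplied by the height condition is trivial (so only $\e_{r+j}\ge 1$ is active, in which case the reduced bound is an overestimate by a factor $\ge 1$). In each subcase the identity $A_{r+j}/k_{r+j,j}=a_j$ is what makes the dominant term from Lemma~\ref{lem:sum_over_n} reproduce the reduced form of (\ref{eq:generic_Vbound}), so that the induction closes uniformly.
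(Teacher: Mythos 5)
Your proposal is correct and follows essentially the same route as the paper: sum first over $\e_{r+s},\dots,\e_{r+1}$ using Lemma~\ref{lem:sum_over_n} with $\kappa=1-a_jk_{r+j,j}$ (sign-splitting on $k_{r+j,j}$, with the identity $A_{r+j}^{(j)}=a_jk_{r+j,j}$ making the exponents telescope to the reduced bound), then over $\e_r,\dots,\e_1$ on $[1,B]$ with $\kappa=1$, accumulating $\M(\theta,\cdot)$ and the log powers $sC+r(C+1)=r+(r+s)C$ exactly as in the paper's two-stage induction.
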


\begin{proof}
  For any $\ell \in \{0, \dots, r+s-1\}$, let \[\theta_\ell(\e_0, \dots,
  \e_\ell) = \M(\theta(\e_0, \dots, \e_{r+s}),\e_{r+s}, \dots, \e_{\ell+1})
  \in \Theta_{0,\ell+1}(C).\]
  For $\ell=s, \dots, 0$, we claim that
  \begin{multline*}
    \sum_{\e_{r+\ell+1}, \dots, \e_{r+s}} \theta(\e_0, \dots, \e_{r+s})V(\e_0,
    \dots, \e_{r+s};B)\\ \ll \frac{\theta_{r+\ell}(\e_1, \dots,
      \e_{r+\ell})B^{1-A^{(\ell)}}(\log B)^{(s-\ell)C}}{\e_0^{1-A_0^{(\ell)}}\cdots
      \e_{r+\ell}^{1-A_{r+\ell}^{(\ell)}}},
  \end{multline*}
  where
  \begin{equation*}
    A^{(\ell)}=\sum_{j=1}^\ell a_j,\quad A_i^{(\ell)}= \sum_{j=1}^\ell a_jk_{i,j}.
  \end{equation*}
  For $\ell=s$, this is true by (\ref{eq:generic_Vbound}). To prove the claim
  in the other cases by induction, we must estimate
  \begin{equation}\label{eq:one_sum}
    \sum_{\e_{r+\ell}} \frac{\theta_{r+\ell}(\e_0, \dots,
      \e_{r+\ell})B^{1-A^{(\ell)}}(\log
      B)^{(s-\ell)C}}{\e_0^{1-A_0^{(\ell)}}\cdots
      \e_{r+\ell}^{1-A_{r+\ell}^{(\ell)}}},
  \end{equation}
  for $\ell=s, \dots, 1$. Since $V(\e_0, \dots, \e_{r+s};B)=0$ unless
  (\ref{eq:height_s}), the summation can be restricted to $\e_{r+\ell}$
  satisfying $\e_{r+\ell} \le T$ if $k_{r+\ell,\ell}>0$ resp.\ $\e_{r+\ell} \ge
  T$ if $k_{r+\ell,\ell}<0$, with $T = (B/(\e_0^{k_{0,\ell}}\cdots
  \e_{r+\ell-1}^{k_{r+\ell-1,\ell}}))^{1/k_{r+\ell,\ell}}$. An application of
  Lemma~\ref{lem:sum_over_n} (with $\kappa = 1-A_{r+\ell}^{(\ell)} = 1-a_\ell
  k_{r+\ell,\ell}$) shows that (\ref{eq:one_sum}) is
  \begin{equation*}
    \ll \frac{\theta_{r+\ell-1}(\e_0, \dots, \e_{r+\ell-1})
      B^{1-A^{(\ell)}+a_\ell}(\log B)^{(s-(\ell-1))C}}
    {\e_0^{1-A_0^{(\ell)}+a_\ell k_{0,\ell}}\cdots
      \e_{r+\ell-1}^{1-A^{(\ell)}_{r+\ell-1}+a_\ell k_{r+\ell-1,\ell}}}.
  \end{equation*}
  The induction step is completed by observing $A^{(\ell)}-a_\ell =
  A^{(\ell-1)}$ and $A^{(\ell)}_i-a_\ell k_{i,\ell} = A^{(\ell-1)}_i$, for
  $i=0, \dots, r+\ell-1$.
  
  For $\ell = r, \dots, 0$, we claim that
  \begin{multline*}
    \sum_{\e_{\ell+1}, \dots, \e_{r+s}} \theta(\e_0, \dots, \e_{r+s})V(\e_0,
    \dots, \e_{r+s};B)\\ \ll \frac{\theta_\ell(\e_0, \dots, \e_\ell)B(\log
      B)^{r-\ell+(r+s-\ell)C}}{\e_0\cdots\e_\ell}.
  \end{multline*}
  This is also proved by induction. The case $\ell=r$ is the ending of our
  first induction. From here, we apply Lemma~\ref{lem:sum_over_n} (with
  $\kappa=1$) for the summation over $\e_\ell$ subject to (\ref{eq:height_r}).
\end{proof}

\begin{definition}\label{def:arithmetic_function_with_average}
  For any $C \in \RRnn$, let $\Theta_0(C)$ be the set of all non-negative
  functions $\theta: \ZZp \to \RR$ such that there is a $c_0 \in \RRnn$ and a
  bounded function $E: \RRnn \to \RR$ such that, for any $t \in
  \RRnn$, \[\sum_{0 < n \le t} \theta(n) = c_0 t+E(t)(\log(t+2))^C.\]
  
  If $\theta \in \Theta_0(C)$, the corresponding $c_0, E(t)$ are unique since
  $t$ grows faster than any power of $\log(t+2)$ for large $t$; we introduce
  the notation \[\A(\theta(n),n) = c_0, \qquad \E(\theta(n),n) =
  \sup_{t \in \RRnn}\{|E(t)|\}.\]
\end{definition}

\begin{definition}\label{def:arithmetic_function_r_with_average}
  For any $C \in \RRnn$ and $r \in \ZZp$, let $\Theta_{1,r}(C,\e_r)$ be the
  set of all functions $\theta: \ZZp^r \to \RR$ in the variables $\e_1, \dots,
  \e_r$ such that
  \begin{enumerate}[(1)]
  \item $\theta(\e_1, \dots, \e_r)$ as a function in $\e_1, \dots, \e_r$ lies
    in $\Theta_{0,r}(0)$.
  \item $\theta(\e_1, \dots, \e_r)$ as a function in $\e_r$ lies in
    $\Theta_0(C)$ for any $\e_1, \dots, \e_{r-1} \in \ZZ$, so that we have
    corresponding
    \[\A(\theta(\e_1, \dots , \e_r),\e_r) : \ZZp^{r-1} \to \RR, \quad
    \E(\theta(\e_1, \dots, ,\e_r), \e_r) : \ZZp^{r-1} \to \RR\] as functions in 
    $\e_1, \dots, \e_{r-1}$.
  \item $\A(\theta(\e_1, \dots , \e_r),\e_r)$ lies in $\Theta_{0,r-1}(0)$.
  \item $\E(\theta(\e_1, \dots, ,\e_r), \e_r)$ lies in $\Theta_{0,r-1}(C)$.
  \end{enumerate}
  We define $\Theta_{1,r}(C,\e_i)$ for any other variable $\e_i$ analogously.
\end{definition}

We want to estimate
\[\sum_{\e_0} \theta(\e_0, \dots, \e_{r+s})V(\e_0,\dots,\e_{r+s};B).\]
We assume that $V$ is as described before
Lemma~\ref{lem:summation_V_theta_bounds} with the additional property that $V$
as a function in the first variable $\eta_0$ has a continuous derivative whose
sign changes only finitely often on the interval $[1,B]$ and vanishes outside
this interval.

\begin{prop}\label{prop:complete_summation_1}
  Let $V$ be as above, and let $\theta \in \Theta_{1,r+s+1}(C,\e_0)$ for some
  $C \in \RRnn$. Then
  \begin{multline*}
    \sum_{\e_0} \theta(\e_0, \dots, \e_{r+s}) V(\e_0,\dots,\e_{r+s};B)=\\
    \A(\theta(\e_0, \dots, \e_{r+s}),\e_0) \int_{t_0 \ge 1} V(t_0,\e_1, \dots,
    \e_{r+s};B) \dd t_0 + R(\e_1, \dots, \e_{r+s};B),
  \end{multline*}
  where
  \begin{equation*}
    \sum_{\e_1, \dots,
      \e_{r+s}} R(\e_1,\dots,\e_{r+s};B) \ll B(\log B)^r(\log \log
    B)^{\max\{1,s\}}.
  \end{equation*}
\end{prop}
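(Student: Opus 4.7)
The approach is to apply Lemma~\ref{lem:sum_product_arithmetic_continuous} to the inner sum over $\e_0$ with $\e_1, \dots, \e_{r+s}$ held fixed, and then sum the resulting error term over the remaining variables using the structural assumptions on $\theta$.

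\emph{Step 1 (Partial summation in $\e_0$).} For fixed $\e_1, \dots, \e_{r+s}$, clause~(2) of Definition~\ref{def:arithmetic_function_r_with_average} says that $\e_0 \mapsto \theta(\e_0, \e_1, \dots, \e_{r+s})$ lies in $\Theta_0(C)$, so
\[
\sum_{0 < n \le t} \theta(n, \e_1, \dots, \e_{r+s}) = \A(\theta, \e_0)(\e_1, \dots, \e_{r+s})\, t + \tilde E(t)(\log(t+2))^C
\]
with $|\tilde E(t)| \le \E(\theta, \e_0)(\e_1, \dots, \e_{r+s})$. Taking $g(t_0) = V(t_0, \e_1, \dots, \e_{r+s}; B)$, the assumed properties of $V$ (supported in $[1,B]$, continuous derivative with only finitely many sign changes there) make Lemma~\ref{lem:sum_product_arithmetic_continuous} applicable with the $E$ of that lemma equal to $\tilde E(t)(\log(t+2))^C$. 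The main term that drops out is precisely $\A(\theta, \e_0) \int_{t_0 \ge 1} V(t_0, \e_1, \dots, \e_{r+s}; B) \dd t_0$, and the error is
\[
R(\e_1, \dots, \e_{r+s}; B) \ll \E(\theta, \e_0)(\e_1, \dots, \e_{r+s}) \cdot (\log B)^C \cdot \sup_{t_0 \in [1, B]} V(t_0, \e_1, \dots, \e_{r+s}; B).
\]

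\emph{Step 2 (Summing the error).} By clause~(4) of Definition~\ref{def:arithmetic_function_r_with_average}, $\E(\theta, \e_0) \in \Theta_{0,r+s}(C)$. I would control $\sup V$ via the pointwise bound (\ref{eq:generic_Vbound}) combined with the support conditions (\ref{eq:height_s}) and (\ref{eq:height_r}), and then execute the iterated summation in the spirit of Lemma~\ref{lem:summation_V_theta_bounds}: first summations over $\e_{r+s}, \dots, \e_{r+1}$, whose ranges are truncated by the height constraints (\ref{eq:height_s}) (so $\kappa \ne 1$ in Lemma~\ref{lem:sum_over_n}), and then summations over $\e_r, \dots, \e_1$ ranging in $[1, B]$ (so $\kappa = 1$). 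The latter contribute one factor of $\log B$ each, yielding the claimed $(\log B)^r$.

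\emph{Main obstacle.} The delicate issue is extracting the precise final shape $(\log B)^r (\log \log B)^{\max\{1, s\}}$ rather than the substantially weaker $(\log B)^{r + (r+s+1)C}$ one would obtain by feeding $R$ into Lemma~\ref{lem:summation_V_theta_bounds} as a black box. The improvement should come from the fact that working with $\sup V$ instead of $V$ changes how the $(\log B)^C$ factor from $\E(\theta, \e_0)$ interacts with the power-law tails of $V$. A dyadic decomposition of each $\e_{r+j}$ near the boundary of the $j$-th height constraint should convert the $(\log B)$ factor that would otherwise be produced by summing $(\log \e_{r+j})^C$ against $\e_{r+j}^{-\kappa}$ into a single $\log \log B$; the case $s = 0$ and the $\max\{1, s\}$ account for the residual $\log \log B$ contributed analogously by the $\e_0$-range. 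Managing this bookkeeping while respecting the iterative structure of $\Theta_{0, r+s}(C)$ is the crux of the argument.
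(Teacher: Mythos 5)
There is a genuine gap. Your Step 1 applies Lemma~\ref{lem:sum_product_arithmetic_continuous} to the full sum over $\e_0 \ge 1$, which forces the error to involve $\sup_{t_0 \ge 1} V$; as you yourself compute in the ``main obstacle'' paragraph, feeding this into the iterated summation then yields $B(\log B)^{r+(r+s+1)C}$, which is far too large. The fix you propose --- a dyadic decomposition of the $\e_{r+j}$ near the boundaries of the height constraints --- cannot work: the $(\log B)^C$ losses do not come from mass concentrated near those boundaries, but from the fact that $\E(\theta,\e_0)$ and the partial sums of $\theta'' \in \Theta_{0,r+s}(C)$ are each only controlled up to factors of $(\log t)^C$ over the \emph{entire} range up to $t \asymp B$. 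No rearrangement of the $V$-side summation removes these factors. The missing idea is a cutoff in the $\e_0$-variable itself: set $T=(\log B)^{(r+s+1)C}$, estimate the range $\e_0 < T$ trivially using only $\theta \in \Theta_{0,r+s+1}(0)$ together with Lemma~\ref{lem:summation_V_theta_bounds} and Lemma~\ref{lem:sum_over_n} (the sum $\sum_{\e_0<T}\e_0^{-1}$ costs only $\log T \ll \log\log B$), and apply partial summation only for $\e_0 \ge T$. There the error involves $\sup_{t_0 \ge T} V$, which (when $A_0 \le 1$) carries a factor $T^{A_0-1} \le T^{-1}\cdot T^{A_0}$ that exactly absorbs the $(\log B)^{(r+s+1)C}$ surplus; the case $A_0>1$ needs a separate argument using the support condition \eqref{eq:height_s} to bound $t_0$ from above.

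This cutoff creates a second obligation that your proposal omits entirely: after partial summation on $\e_0 \ge T$ the main term is $\theta'\int_{t_0 \ge T} V\dd t_0$, so one must still show that $\sum_{\e_1,\dots,\e_{r+s}}\theta'\int_1^T V\dd t_0 \ll B(\log B)^r(\log\log B)^{\max\{1,s\}}$ in order to replace $\int_{t_0\ge T}$ by $\int_{t_0\ge 1}$ in the statement. This is where the exponent $\max\{1,s\}$ actually comes from: one splits into $2^s$ cases according to whether each $X_j=\e_1^{k_{1,j}}\cdots\e_{r+j}^{k_{r+j,j}}$ is below or above $\min\{BT^{-k_{0,j}},B\}$, and for each index $j$ in the ``above'' regime the variable $\e_{r+j}$ is confined to a range of multiplicative length $T^{|k_{0,j}|}$, contributing one factor $\log\log B$. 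Your sketch produces neither this step nor any mechanism for the $(\log\log B)^s$, so the argument as written does not close.
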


\begin{proof}
  We note that we may always assume that $1 \le \e_0, \dots, \e_r \le B$ since
  all terms and error terms vanish otherwise. Let $\theta' \in
  \Theta_{0,r+s}(0)$ and $\theta'' \in \Theta_{0,r+s}(C)$ be defined as
  \begin{equation*}
    \begin{split}
      \theta'(\e_1, \dots, \e_{r+s}) &= \A(\theta(\e_0, \dots, \e_{r+s}),\e_0),\\
      \theta''(\e_1, \dots, \e_{r+s}) &= \E(\theta(\e_0, \dots, \e_{r+s}),\e_0).
    \end{split}
  \end{equation*}
  We proceed in three steps. Let $T = (\log B)^{(r+s+1)C}$.
  \begin{enumerate}[(1)]
  \item\label{it:gen_step1} We show that 
    \begin{equation*}
      \sums{\e_0, \dots, \e_{r+s}\\\e_0 < T}\theta(\e_0, \dots, \e_{r+s}) V(\e_0,\dots,\e_{r+s};B)
      \ll B(\log B)^r(\log \log B).
    \end{equation*}
  \item\label{it:gen_step2} Combining $\theta \in \Theta_0(C)$ as a function
    in $\eta_0$ with Lemma~\ref{lem:sum_product_arithmetic_continuous}, we
    have
    \begin{equation*}
      \begin{split}
        &\sum_{\e_0 \ge T} \theta(\e_0, \dots, \e_{r+s}) V(\e_0,\dots,\e_{r+s};B)\\
        ={}& \theta'(\e_1,\dots,\e_{r+s})\int_{t_0 \ge T}
        V(t_0,\e_1,\dots,\e_{r+s};B) \dd t_0\\ &+ O\left(\theta''(\e_1, \dots,
          \e_{r+s})(\log B)^{C} \sup_{t_0\ge T} V(t_0,\e_1, \dots,
          \e_{r+s};B)\right)
      \end{split}
    \end{equation*}
    Here, we show that summing the error term over $\e_1, \dots, \e_{r+s}$
    gives $O(B(\log B)^r)$.
  \item\label{it:gen_step3} To complete the proof, we must
    estimate \[\sum_{\e_1, \dots, \e_{r+s}} \theta'(\e_1, \dots,
    \e_{r+s})\int_{1}^{T} V(t_0,\e_1, \dots, \e_{r+s};B) \dd t_0.\] If $s=1$
    and $k_{0,1} > 0$, we consider the case
    $T^{k_{0,1}}\e_1^{k_{1,1}}\cdots\e_{r+1}^{k_{r+1,1}} \le B$ and its
    opposite separately. If $s > 1$, we distinguish $2^s$ cases.
  \end{enumerate}

  For \eqref{it:gen_step1}, we use $\theta \in \Theta_{0,r+s+1}(0)$ and 
  Lemma~\ref{lem:summation_V_theta_bounds} for the summation over $\e_1,
  \dots, \e_{r+s}$ and Lemma~\ref{lem:sum_over_n} for the summation over
  $\e_0$ to compute
  \begin{equation*}
    \begin{split}
      &\sum_{\e_0, \dots, \e_{r+s}} \theta(\e_0, \dots, \e_{r+s})V(\e_0,
      \dots,
      \e_{r+s};B)\\
      \ll{}& \sum_{1 \le \e_0 < T} \e_0^{-1}\M(\theta(\e_0, \dots,
      \e_{r+s}),\e_{r+s}, \dots, \e_1)B(\log B)^r\\
      \ll{}& B(\log B)^r(\log \log B).
    \end{split}
  \end{equation*}

  For \eqref{it:gen_step2}, because of~(\ref{eq:height_s}), the error term
  vanishes unless, for $j=1, \dots,
  s$, \[T^{k_{0,j}}\e_1^{k_{1,j}}\cdots\e_{r+s}^{k_{r+s,j}} \le B.\]

  If $A_0\le 1$, using $\theta'' \in \Theta_{0,r+s}(C)$ and
  Lemma~\ref{lem:summation_V_theta_bounds} (with $\e_0=T$), we compute
  \[\begin{split} &\sum_{\e_1, \dots, \e_{r+s}} (\log
    B)^{C}\theta''(\e_1, \dots, \e_{r+s})
    \sup_{t_0 \ge T} V(t_0, \e_1, \dots, \e_{r+s};B)\\
    \ll{}&\sum_{\e_1, \dots, \e_{r+s}} \frac{(\log B)^C\theta''(\e_1, \dots,
      \e_{r+s})B^{1-A}}{T^{1-A_0}\e_1^{1-A_1}\cdots
      \e_{r+s}^{1-A_{r+s}}}\\
    \ll{}& T^{-1} B(\log B)^{r+(r+s+1)C}\\
    \ll{}& B(\log B)^r.
  \end{split}\]

  If $A_0 > 1$, then (\ref{eq:height_s}) implies that $V(t_0,\e_1, \dots,
  \e_{r+s};B)=0$ unless \[t_0^{A_0}\e_1^{A_1}\cdots\e_{r+s}^{A_{r+s}} \le
  B^A.\]
  Therefore, 
  \begin{equation*}
    \begin{split}
      &\sum_{\e_1, \dots, \e_{r+s}} (\log B)^{C}\theta''(\e_1, \dots, \e_{r+s})
      \sup_{t_0 \ge T} V(t_0, \e_1, \dots, \e_{r+s};B)\\
      \ll&{} \sum_{\e_1, \dots, \e_{r+s}} \frac{(\log B)^{C}\theta''(\e_1,
        \dots, \e_{r+s}) B^{1-A}}{\e_1^{1-A_1}\cdots\e_{r+s}^{1-A_{r+s}}}
      \sup_{T \le t_0 \le (B^A/(\e_1^{A_1}\cdots \e_{r+s}^{A_{r+s}}))^{1/A_0}}
      \frac{1}{t_0^{1-A_0}}\\
      \ll&{} \sum_{\e_1, \dots, \e_{r+s}} \frac{(\log B)^{C}\theta''(\e_1,
        \dots, \e_{r+s})
        B^{1-A/A_0}}{\e_1^{1-A_1/A_0}\cdots\e_{r+s}^{1-A_{r+s}/A_0}}
    \end{split}
  \end{equation*}
  We apply Lemma~\ref{lem:summation_V_theta_bounds} (with $\e_0=T$ and
  $k_{i,j}$ replaced by $k_{i,j}/A_0$) to conclude that this is $O(B(\log
  B)^r)$.

  For \eqref{it:gen_step3}, we assume $A_0=0$ first. We use $\theta' \in
  \Theta_{0,r+s}(0)$ and Lemma~\ref{lem:summation_V_theta_bounds} (with
  $\e_0=1$) to compute
  \[\begin{split} &\sum_{\e_1, \dots, \e_{r+s}} \theta'(\e_1, \dots,
    \e_{r+s})\int_1^T V(t_0,\e_1, \dots, \e_{r+s};B)
    \dd t_0\\
    \ll{}& \sum_{\e_1, \dots, \e_{r+s}} \frac{\theta'(\e_1, \dots,
      \e_{r+s})B^{1-A}}{\e_1^{1-A_1}\cdots
      \e_{r+s}^{1-A_{r+s}}} \int_1^T \frac{1}{t_0}\dd t_0\\
    \ll{}& B(\log B)^r(\log \log B).
  \end{split}\] 

  Now we suppose $A_0>0$. Let \[X_j=\e_1^{k_{1,j}} \cdots
  \e_{r+s}^{k_{r+s,j}}=\e_1^{k_{1,j}}\cdots \e_{r+j}^{k_{r+j,j}},\] for $j=1,
  \dots, s$.  We distinguish $2^s$ cases, labeled by the subsets $J$ of $\{1,
  \dots, s\}$.  In case $J$, we assume $X_j \le \min\{BT^{-k_{0,j}},B\}$ for
  each $j \in J$, and $X_j>\min\{BT^{-k_{0,j}},B\}$ for each $j \notin J$. By
  (\ref{eq:height_s}), $V(t_0, \e_1, \dots, \e_{r+s};B)=0$ unless
  $t_0^{k_{0,j}}X_j \le B$. Therefore, we may restrict to $X_j \le
  \max_{1 \le t_0 \le T} \{Bt_0^{-k_{0,j}}\}$.

  In total, in case $J$, we may restrict the summation over $\e_1, \dots,
  \e_{r+s}$ to
  \begin{equation*}
    X_j \in
    \begin{cases}
      [1,BT^{-k_{0,j}}], &j \in J,\ k_{0,j} \ge 0,\\
      (BT^{-k_{0,j}},B], &j \notin J,\ k_{0,j} \ge 0,\\
      [1,B], &j \in J,\ k_{0,j} < 0,\\
      (B,BT^{-k_{0,j}}], &j \notin J,\ k_{0,j} < 0;\\
    \end{cases}
  \end{equation*}
  in particular, the summation is trivial if $k_{0,j} = 0$ for some $j \notin
  J$, so we assume there is no such $j$.  Furthermore, we may restrict
  the integration over $t_0$ to the interval $[T_1,T_2]$ where
  \begin{equation*}
    T_1 = \maxs{j \in \{1,\dots, s\},\\k_{0,j} < 0}\{1,
    (BX_j^{-1})^{1/k_{0,j}}\}, \qquad T_2 = 
    \mins{j \in \{1, \dots, s\}\\k_{0,j} > 0}\{T,(BX_j^{-1})^{1/k_{0,j}}\};
  \end{equation*}
  we may assume that $T_1 \le T_2$ since the
  integral vanishes otherwise. We note that $1 \le (BX_j^{-1})^{1/k_{0,j}} \le
  T$ if and only if $j \notin J$.

  We define
  \begin{equation*}
    A'=\sum_{j \in J} a_j,\qquad A'_0=\sums{j \in J\\k_{0,j} > 0}
    a_jk_{0,j},\qquad A'_i=\sum_{j \in J} a_jk_{i,j},
  \end{equation*}
  for $i=1, \dots, r+s$.

  Combining (\ref{eq:generic_Vbound}) with
  \begin{multline*}
    \int_{T_1}^{T_2} \frac 1 {t_0^{1-A_0}} \dd t_0 \ll T_1^{A_0}+T_2^{A_0} \ll
    \prods{j \in J\\k_{0,j} > 0} T^{a_j k_{0,j}} \prod_{j \notin
      J}(BX_j^{-1})^{a_j}\\ =
    \frac{B^{A-A'}T^{A_0'}}{\e_1^{A_1-A_1'}\cdots\e_{r+s}^{A_{r+s}-A_{r+s}'}}, 
  \end{multline*}
  we obtain as the contribution of case $J$ to the error term of
  \eqref{it:gen_step3}
  \begin{equation*}
    \begin{split} &\sum_{\e_1, \dots, \e_{r+s}}
      \theta'(\e_1, \dots, \e_{r+s}) \int_1^T
      V(t_0, \e_1, \dots \e_{r+s};B) \dd t_0\\
      \ll{}& \sum_{\e_1, \dots, \e_{r+s}} \frac{\theta'(\e_1, \dots,
        \e_{r+s})B^{1-A}}{\e_1^{1-A_1}\cdots\e_{r+s}^{1-A_{r+s}}} \int_{T_1}^{T_2}
      \frac{1}{t_0^{1-A_0}} \dd t_0\\
      \ll{}& \sum_{\e_1, \dots, \e_{r+s}} \frac{\theta'(\e_1, \dots, \e_{r+s})
        B^{1-A'} T^{A'_0}}{\e_1^{1-A'_1} \cdots \e_{r+s}^{1-A'_{r+s}}}.
    \end{split}
  \end{equation*}
  For $j=s, \dots, 1$, we handle the summation over $\e_{r+j}$ using $\theta'
  \in \Theta_{0,r+s}(0)$ and Lemma~\ref{lem:sum_over_n}. After the summations
  over $\e_{r+s}, \dots, \e_{r+j+1}$ are done, the exponent of $\e_{r+j}$ in
  the denominator is $1-a_jk_{r+j,j}$ if $j \in J$ and it is $1$
  otherwise. For $j \in J$ and $k_{0,j} \ge 0$, we use $X_j \le
  BT^{-k_{0,j}}$, i.e.,
  \begin{equation*}
    \e_{r+j}^{a_jk_{r+j,j}} \le
    \frac{B^{a_j}T^{-a_jk_{0,j}}}{\e_1^{a_jk_{1,j}}\cdots
      \e_{r+j-1}^{a_jk_{r+j-1,j}}}.
  \end{equation*}
  For $j \in J$ and $k_{0,j} < 0$, we use $X_j \le B$, i.e.,
  \begin{equation*}
    \e_{r+j}^{a_jk_{r+j,j}} \le
    \frac{B^{a_j}}{\e_1^{a_jk_{1,j}}\cdots
      \e_{r+j-1}^{a_jk_{r+j-1,j}}}.
  \end{equation*}
  For $j \notin J$, we use that $BT^{-k_{0,j}} < X_j \le B$, for $k_{0,j} >
  0$, resp.\ $B < X_j \le BT^{-k_{0,j}}$, for $k_{0,j} < 0$, implies that, for
  $\e_1, \dots, \e_{r+j-1}$ fixed, there are $\ll T^{k_{0,j}}$ possibilities
  for $\e_{r+j}$, which shows that we pick up a factor $(\log \log B)$.

  It follows that we can continue our estimation as
  \begin{equation*}
    \begin{split}
      &\ll \sum_{\e_1, \dots, \e_r} \frac{\M(\theta'(\e_1, \dots,
        \e_{r+s}),\e_{r+s}, \dots, \e_{r+1})
        B(\log \log B)^{s-\#J}}{\e_1\cdots \e_r}\\
      &\ll B(\log B)^r(\log \log B)^s
    \end{split}
  \end{equation*}
  since $0 \le \#J \le s$.
\end{proof}

The next result is concerned with a similar situation as in
Proposition~\ref{prop:complete_summation_1}, with $r \in \ZZp$ and $s=1$.

Let $V: \RR^{r+2} \times \RR_{\ge 3} \to \RR$ be a non-negative function,
and \[k_0, \dots, k_r \in \RR, \quad k_{r+1} \in \RRnz, \quad a,b \in \RRp\]
such that
\begin{equation}\label{eq:generic_Vbound_double}
   V(\e_0, \dots, \e_{r+1};B) \ll
   \min\left\{\frac{B^{1-a}}{\e_0^{1-ak_0}\cdots\e_{r+1}^{1-ak_{r+1}}},
     \frac{B^{1+b}}{\e_0^{1+bk_0}\cdots\e_{r+1}^{1+bk_{r+1}}}\right\}.
\end{equation}
We assume that $V(\e_0, \dots, \e_{r+1};B) = 0$ unless, for $i=0, \dots, r+1$, 
\begin{equation}\label{eq:height_r_double}
  1 \le \e_i \le B.
\end{equation}
We assume that $V$ as a function in the first variable
$\eta_0$ has a continuous derivative whose sign changes only finitely often on
the interval $[1,B]$.

\begin{prop}\label{prop:complete_summation_2}
  For some $C \in \RRnn$, let $\theta \in \Theta_{1,r+2}(C,\e_0)$. Let $V$ be
  as above. Then
  \begin{multline*}
    \sum_{\e_0} \theta(\e_0, \dots, \e_{r+1})V(\e_0, \dots, \e_{r+1};B)\\
    = \theta'(\e_1, \dots, \e_{r+1})\int_{t_0 \ge 1} V(t_0,\e_1, \dots,
    \e_{r+s};B) \dd t_0+ R(\e_1, \dots, \e_{r+1};B),
  \end{multline*}
  where \[\sum_{\e_1, \dots,
    \e_{r+1}} R(\e_1, \dots, \e_{r+1};B) \ll B(\log B)^r(\log \log B).\]
\end{prop}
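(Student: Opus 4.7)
I would mirror the three-step argument in the proof of Proposition~\ref{prop:complete_summation_1} specialized to $s=1$, with the two-sided bound~\eqref{eq:generic_Vbound_double} taking the role of the single-sided bound combined with the explicit height condition~\eqref{eq:height_s}. Write $B_1 := B^{1-a}/\prod_{i=0}^{r+1} \e_i^{1-ak_i}$ and $B_2 := B^{1+b}/\prod_{i=0}^{r+1} \e_i^{1+bk_i}$; a direct computation shows that $B_1 \le B_2$ if and only if $\prod_{i=0}^{r+1} \e_i^{k_i} \le B$, so the minimum of the two bounds effectively imposes a soft height condition. I would set $\theta'(\e_1, \dots, \e_{r+1}) := \A(\theta(\e_0, \dots, \e_{r+1}), \e_0) \in \Theta_{0,r+1}(0)$ and $\theta''(\e_1, \dots, \e_{r+1}) := \E(\theta(\e_0, \dots, \e_{r+1}), \e_0) \in \Theta_{0,r+1}(C)$ via $\theta \in \Theta_{1,r+2}(C,\e_0)$, and let $T := (\log B)^K$ for $K$ sufficiently large in terms of $r$ and $C$.

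The key intermediate estimate I would establish is
\begin{equation*}
\sum_{\e_1, \dots, \e_{r+1}} V(\e_0, \dots, \e_{r+1}; B) \ll \frac{B(\log B)^r}{\e_0}
\end{equation*}
uniformly in $\e_0$. The plan is to split the summation domain into the region $\{\prod_{i=0}^{r+1} \e_i^{k_i} \le B\}$ (where $V \ll B_1$) and its complement (where $V \ll B_2$). On the first region, Lemma~\ref{lem:summation_V_theta_bounds} applies directly with $s=1$, $a_1 = a$, $k_{i,1} = k_i$. On the second region, the constraint restricts $\e_{r+1}$ to $\e_{r+1} > T'$ if $k_{r+1} > 0$ or $\e_{r+1} < T'$ if $k_{r+1} < 0$, where $T' := (B/\prod_{i=0}^{r} \e_i^{k_i})^{1/k_{r+1}}$. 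Applying Lemma~\ref{lem:sum_over_n} to the $\e_{r+1}$-sum with $\kappa = 1+bk_{r+1}$ produces a factor of the form $T'^{-bk_{r+1}} = B^{-b}\prod_{i=0}^{r} \e_i^{bk_i}$ that exactly cancels the $B^b$-overshoot in $B_2$, leaving $B/\prod_{i=0}^{r} \e_i$. The remaining $r$ summations over $\e_1, \dots, \e_r$ each contribute a single $\log B$ via Lemma~\ref{lem:sum_over_n} with $\kappa = 1$, yielding $B(\log B)^r/\e_0$.

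Granting this estimate, the three steps go as follows. For $\e_0 < T$, multiplying by $\theta \in \Theta_{0,r+2}(0)$ and summing via Lemma~\ref{lem:sum_over_n} gives the contribution $O(B(\log B)^r \log\log B)$. For $\e_0 \ge T$, Lemma~\ref{lem:sum_product_arithmetic_continuous} applied to $\theta$ as a function of $\e_0$ yields
\begin{equation*}
\sum_{\e_0 \ge T} \theta V = \theta'(\e_1, \dots, \e_{r+1})\int_T^\infty V \dd t_0 + O\bigl(\theta''(\e_1, \dots, \e_{r+1})\,(\log B)^C \sup_{t_0 \ge T} V\bigr).
\end{equation*}
Extending the integral to $[1,\infty)$ costs $\int_1^T V \dd t_0$, which I would handle by the same region-splitting argument (with integration over $t_0$ in place of the sum over $\e_0$), contributing another $O(B(\log B)^r \log\log B)$. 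For the supremum error, I would choose the $V$-bound whose $t_0$-exponent is positive: $B_2$ if $k_0 \ge 0$ (exponent $1+bk_0 > 0$), else $B_1$ (exponent $1-ak_0 > 1$); in either case $\sup_{t_0 \ge T} V$ gains a factor $T^{-\delta}$ with $\delta > 0$, which, combined with $(\log B)^C$ from $\theta''$ and the log factors from the $\e_i$-summation, is $\ll B(\log B)^r$ provided $K$ is large enough.

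The main obstacle is the analysis of the complementary region $\{\prod_{i=0}^{r+1} \e_i^{k_i} > B\}$ in the intermediate estimate: although $V \le B_2 \le B_1$ holds there, a direct appeal to Lemma~\ref{lem:summation_V_theta_bounds} is not available because the reverse height constraint is not of the form required by its hypothesis. Instead, I would reproduce the inductive step of that lemma's proof by hand, verifying that Lemma~\ref{lem:sum_over_n} with $\kappa > 1$ (respectively $\kappa < 1$) produces exactly the cancellation needed to match the standard estimate.
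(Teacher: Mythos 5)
Your three-step skeleton (truncate at $\e_0<T$; apply Lemma~\ref{lem:sum_product_arithmetic_continuous} for $\e_0\ge T$; extend the integral from $T$ down to $1$) and your region-splitting according to $\e_0^{k_0}\cdots\e_{r+1}^{k_{r+1}}\lessgtr B$ are exactly the paper's, and your intermediate estimate together with the observation that the crossover between the two bounds in (\ref{eq:generic_Vbound_double}) occurs at $\e_0^{k_0}\cdots\e_{r+1}^{k_{r+1}}=B$ is the right engine for steps (1) and (3). However, your treatment of the supremum error in step (2) has a genuine gap. You propose to bound $\sup_{t_0\ge T}V$ by whichever single bound ($B_1$ or $B_2$) has a favourable $t_0$-exponent and then assert that ``the log factors from the $\e_i$-summation'' are all that remains. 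That assertion fails when the $k_i$ have mixed signs: e.g.\ with $r=0$, $k_0=1$, $k_1=-1$, the bound $B_2$ gives
\begin{equation*}
  \sum_{1\le\e_1\le B}\theta''(\e_1)\sup_{t_0\ge T}\frac{B^{1+b}}{t_0^{1+b}\e_1^{1-b}}
  \gg B^{1+2b}T^{-(1+b)},
\end{equation*}
since Lemma~\ref{lem:sum_over_n} with $\kappa=1-b<1$ produces the positive power $B^{b}$ rather than a logarithm; no choice of $T=(\log B)^K$ can absorb this. The same failure occurs for $B_1$ alone when $k_0<0$ and some $k_i>0$.

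The missing idea is to interpolate between the two bounds rather than choose one: since $\min\{x,y\}\le x^{b/(a+b)}y^{a/(a+b)}$, the hypothesis (\ref{eq:generic_Vbound_double}) yields the uniform bound $V(\e_0,\dots,\e_{r+1};B)\ll B/(\e_0\cdots\e_{r+1})$, in which \emph{every} variable appears with exponent exactly $-1$. This is what the paper uses in its step (2): the supremum over $t_0\ge T$ then gains the factor $T^{-1}$, each $\e_i$-sum over $[1,B]$ contributes a single $\log B$, and $T=(\log B)^{1+(r+2)C}$ suffices to reduce the total to $O(B(\log B)^r)$. With this replacement your argument goes through; note also that your step (3) (the contribution of $\int_1^T V\dd t_0$) needs, when $k_0\ne 0$, the three-case split according to whether the crossover point $(\e_1^{k_1}\cdots\e_{r+1}^{k_{r+1}}/B)^{-1/k_0}$ lies below $1$, inside $[1,T]$, or above $T$ — this is the precise form of the ``same region-splitting argument'' you gesture at, and it is where the $\log\log B$ arises.
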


\begin{proof}
  We define $\theta' \in \Theta_{0,r+1}(0)$ and $\theta'' \in
  \Theta_{0,r+1}(C)$ as in the proof of
  Proposition~\ref{prop:complete_summation_1}.  Let \[M=M(\e_0, \dots,
  \e_{r+1};B) = \theta(\e_0, \dots, \e_{r+1})V(\e_0, \dots, \e_{r+1};B)\] and
  \begin{multline*}
    M'(t) = M'(t,\e_1 \dots, \e_{r+1};B)\\ = \theta'(\e_1, \dots,
    \e_{r+1})\int_{t_0 \ge t} V(t_0, \e_1, \dots, \e_{r+1}; B) \dd t_0.
  \end{multline*}
  We want to show that $M$ summed over all $\e_0 \in \ZZp$ agrees with $M'(1)$
  up to an acceptable error. We do this in three steps, where $T = (\log
  B)^{1+(r+2)C}$.
  \begin{enumerate}[(1)]
  \item\label{it:step1} We show that $M$ summed over all $\e_0$ agrees with
    $M$ summed over $\e_0 \ge T$ up to an acceptable error, by
    proving that \[\sums{\e_0, \dots, \e_{r+1}\\\e_0 < T} M \ll
    B(\log B)^r(\log\log B).\]
  \item\label{it:step2} We show that $M$ summed over $\e_0 \ge T$ gives
    $M'(T)$ up to an error of $R' = R'(\e_1, \dots, \e_{r+1};B)$ with
    $\sum_{\e_1, \dots, \e_{r+1}} R' \ll B(\log B)^{r}$.
  \item\label{it:step3} We show that $M'(T)$ summed over $\e_1, \dots, \e_{r+1}$
    agrees with $M'(1)$ up to an acceptable error, by proving that \[\sum_{\e_1,
      \dots, \e_{r+1}}(M'(1)-M'(T)) \ll B(\log B)^r(\log \log B).\] If $k_0<0$, we distinguish
    three cases, where $\e_1^{k_1}\cdots \e_{r+1}^{k_{r+1}}$ is at most $B$,
    or at least $BT^{-k_0}$, or between these two numbers.
  \end{enumerate}

  For \eqref{it:step1}, we use (\ref{eq:generic_Vbound_double}), $\theta \in
  \Theta_{0,r+2}(0)$ and (\ref{eq:height_r_double}). For
  $\e_0^{k_0}\cdots\e_{r+1}^{k_{r+1}} \le B$, we apply
  Lemma~\ref{lem:summation_V_theta_bounds} to compute
  \begin{equation*}
    \begin{split}
      \sum_{\e_0, \dots, \e_{r+1}} M &\ll \sum_{\e_0, \dots, \e_{r+1}}
      \frac{\theta(\e_0, \dots, \e_{r+1})B^{1-a}}
      {\e_0^{1-ak_0}\cdots\e_{r+1}^{1-ak_{r+1}}}\\
      &\ll \sum_{\e_0}
      \e_0^{-1}\M(\theta(\e_0, \dots, \e_{r+1}),\e_{r+1}, \dots, \e_1)B(\log
      B)^r\\
      &\ll B(\log B)^r(\log\log B).
    \end{split}
  \end{equation*}
  In the opposite case, by Lemma~\ref{lem:sum_over_n}, we have 
  \begin{equation*}
    \begin{split} \sum_{\e_0, \dots,
        \e_{r+1}} M &\ll \frac{\theta(\e_0, \dots,
        \e_{r+1})B^{1+b}}{\e_0^{1+bk_0}\cdots \e_{r+1}^{1+bk_{r+1}}}\\
      &\ll \sum_{\e_0, \dots, \e_r}
      \frac{\M(\theta(\e_0, \dots, \e_{r+1}),\e_{r+1})B}{\e_0\cdots\e_r}\\
      &\ll B(\log B)^r(\log\log B).
    \end{split}
  \end{equation*}
  
  For \eqref{it:step2}, we combine $\theta \in \Theta_0(C)$ as a function in
  $\eta_0$ with Lemma~\ref{lem:sum_product_arithmetic_continuous}. This shows
  that $M$ summed over $\e_0\ge T$ gives the main term $M'(T)$ as above and an
  error term which can be estimated (using $V(\e_0, \dots, \e_{r+1};B) \ll
  \frac{B}{\e_0\cdots\e_{r+1}}$ by (\ref{eq:generic_Vbound_double}), $\theta''
  \in \Theta_{0,r+1}(C)$, (\ref{eq:height_r_double}) and
  Lemma~\ref{lem:sum_over_n}) as
  \[\begin{split}  &\ll \sum_{\e_1, \dots, \e_{r+1}} (\log
    B)^{C}\theta''(\e_1, \dots, \e_{r+1})\sup_{t_0 \ge T} V(t_0,
    \e_1, \dots, \e_{r+1};B)\\
    &\ll \sum_{\e_1, \dots, \e_{r+1}} \frac{(\log B)^{C}\theta''(\e_1,
      \dots, \e_{r+1})B}
    {T \e_1\cdots\e_{r+1}}\\
    &\ll T^{-1} B(\log B)^{r+1+(r+2)C} = B(\log B)^r.
  \end{split}\]

  For \eqref{it:step3}, we suppose $k_{r+1}>0$; the case $k_{r+1}<0$ is similar.
  In the following computations, we use (\ref{eq:generic_Vbound_double}),
  $\theta' \in \Theta_{0,r+1}(0)$, (\ref{eq:height_r_double}) and
  Lemma~\ref{lem:sum_over_n}.

  If $k_0<0$, we split the summation over $\e_1, \dots, \e_{r+1}$ and
  integration over $t_0$ into three parts, the first defined by the condition
  $\e_1^{k_1}\cdots \e_{r+1}^{k_{r+1}}\le B$. We
  estimate using Lemma~\ref{lem:summation_V_theta_bounds} (with $\e_0=1$)
  \begin{equation*}
    \begin{split}
      \ll{}& \sum_{\e_1, \dots, \e_{r+1}} \theta'(\e_1, \dots, \e_{r+1})
      \int_{1}^{T}
      V(t_0,\e_1, \dots, \e_{r+1};B) \dd t_0\\
      \ll{}&\sum_{\e_1, \dots, \e_{r+1}} \theta'(\e_1, \dots, \e_{r+1})
      \int_{1}^{T}
      \frac{B^{1-a}}{t_0^{1-ak_0}\e_1^{1-ak_1}\cdots\e_{r+1}^{1-ak_{r+1}}}
      \dd t_0\\
      \ll{}&\sum_{\e_1, \dots, \e_{r+1}} \frac{\theta'(\e_1, \dots,
        \e_{r+1})B^{1-a}}{\e_1^{1-ak_1}\cdots\e_{r+1}^{1-ak_{r+1}}}\\
      \ll{}& B(\log B)^r.
    \end{split}
  \end{equation*}
  For the second subset defined by $B < \e_1^{k_1}\cdots
  \e_{r+1}^{k_{r+1}} \le BT^{-k_0}$, we get \[\begin{split} \ll{}&
    \sum_{\e_1, \dots, \e_{r+1}}
    \theta'(\e_1, \dots, \e_{r+1})\\
    &\times \left(\int_{t_0 \le (\e_1^{k_1}\cdots
        \e_{r+1}^{k_{r+1}}/B)^{-1/k_0}}
      \frac{B^{1+b}}{t_0^{1+bk_0}\e_1^{1+bk_1}\cdots\e_{r+1}^{1+bk_{r+1}}} \dd
      t_0\right.\\ &\qquad \left.+ \int_{t_0 \ge (\e_1^{k_1}\cdots
        \e_{r+1}^{k_{r+1}}/B)^{-1/k_0}} \frac{B^{1-a}}
      {t_0^{1-ak_0}\e_1^{1-ak_1}\cdots\e_{r+1}^{1-ak_{r+1}}}
      \dd t_0\right)\\
    \ll{}&\sum_{\e_1, \dots, \e_{r+1}} \frac{\theta'(\e_1, \dots, \e_{r+1})B}{\e_1\cdots\e_{r+1}}\\
    \ll{}& B(\log B)^r(\log \log B).
  \end{split}\] For the third subset defined by $\e_1^{k_1}\cdots
  \e_{r+1}^{k_{r+1}} > BT^{-k_0}$, we get \[\begin{split} &\ll \sum_{\e_1,
      \dots, \e_{r+1}} \int_{1}^{T}
    \frac{\theta'(\e_1,\dots,\e_{r+1})B^{1+b}}
    {t_0^{1+bk_0}\e_1^{1+bk_1}\cdots\e_{r+1}^{1+bk_{r+1}}} \dd t_0\\
    &\ll\sum_{\e_1, \dots, \e_{r+1}} \frac{\theta'(\e_1, \dots,
      \e_{r+1})B^{1+b}T^{-bk_0}}{\e_1^{1+bk_1}\cdots\e_{r+1}^{1+bk_{r+1}}}\\
    &\ll\sum_{\e_1, \dots, \e_r} \frac{\M(\theta'(\e_1, \dots, \e_{r+1}),\e_{r+1})B}{\e_1\cdots\e_r}\\
    &\ll B(\log B)^r.
  \end{split}\]
  
  If $k_0> 0$, the computations are similar.

  If $k_0=0$, we split the summation over $\e_1, \dots, \e_{r+1}$ into two
  subsets, the first defined by $\e_1^{k_1}\cdots\e_{r+1}^{k_{r+1}}\le
  B$.

  Here, we compute
  \[\begin{split}
    \ll{}& \sum_{\e_1, \dots, \e_{r+1}} \theta'(\e_1,
    \dots, \e_{r+1}) \int_1^{T}
    \frac{B^{1-a}}{t_0\e_1^{1-ak_1}\cdots\e_{r+1}^{1-ak_{r+1}}} \dd t_0\\
    \ll{}& \sum_{\e_1, \dots, \e_{r+1}} \frac{\theta'(\e_1, \dots,
      \e_{r+1})B^{1-a}(\log \log
      B)}{\e_1^{1-ak_1}\cdots\e_{r+1}^{1-ak_{r+1}}}\\
    \ll{}& B(\log B)^r(\log \log B).
  \end{split}\] For the subset defined by $\e_1^{k_1}\cdots\e_{r+1}^{k_{r+1}}>
  B$, the computation is similar.
\end{proof}

\section{Completion of summations}\label{sec:generic_completion}

Let $r,s \in \ZZnn$ with $r \ge s$. In this section, we consider functions
\[\theta_{r+s}: \ZZnn^{r+s} \to \RR, \qquad V_{r+s}: \RRnn^{r+s} \times \RR_{\ge
  3} \to \RR.\] In the previous section, we summed the product of such
functions over one variable; here, we sum over all variables and therefore
want to estimate
\begin{equation*}
  \sum_{\e_1, \dots, \e_{r+s}} \theta_{r+s}(\e_1, \dots,
  \e_{r+s})V_{r+s}(\e_1, \dots, \e_{r+s};B).
\end{equation*}
This will be done in the case that
$\theta_{r+s}$ and $V_{r+s}$ fulfill certain conditions described in the
following that allow us to apply Proposition~\ref{prop:complete_summation_1}
repeatedly.

For the implied constants in this section, we use a similar convention as
described in Remark~\ref{rem:implied_constants}, i.e., the implied constants
are meant to be independent of $\e_1, \dots, \e_{r+s}$ and $B$, but may depend
on everything else, in particular on $V_{r+s}$ and $\theta_{r+s}$.

For $V_{r+s}: \RRnn^{r+s} \times \RR_{\ge 3} \to \RR$ a non-negative function,
we require the following, similar to Section~\ref{sec:generic_errors}. We
assume that, for $j = 1, \dots, s$, we have $a_j \in \RRp$ and
\begin{align*}
  k_{1,j}, \dots, k_{r-s+j-1,j} &\in \RR, & k_{r-s+j,j} &\in \RRnz, &
  k_{r-s+j+1,j}, \dots, k_{r,j} &= 0,\\
  k_{r+1,j}, \dots, k_{r+j-1,j} &\in \RR,& k_{r+j,j} &\in \RRnz,&
  k_{r+j+1,j},\dots, k_{r+s,j}&=0.
\end{align*}

For $\ell=1, \dots, s$ and $i = 1, \dots, r+s$, we define
\begin{equation*}
  A^{(\ell)}=\sum_{j=1}^\ell a_j,\quad A_i^{(\ell)}= \sum_{j=1}^\ell a_jk_{i,j}.
\end{equation*}
We assume that
\begin{equation}\label{eq:completion_bound}
  V_{r+s}(\e_1, \dots, \e_{r+s};B) \ll \frac{B^{1-A^{(s)}}}{\e_1^{1-A_1^{(s)}}\cdots
    \e_{r+s}^{1-A_{r+s}^{(s)}}},
\end{equation}
and that $V_{r+s}(\e_1, \dots, \e_{r+s};B) = 0$
unless both
\begin{equation}\label{eq:bound_s_completion}
  \e_1^{k_1,j}\cdots\e_{r+s}^{k_{r+s,j}} =
  \e_1^{k_1,j}\cdots\e_{r+j}^{k_{r+j,j}} \le B,
\end{equation}
for $j=1, \dots, s$, and
\begin{equation}\label{eq:bound_r_completion}
  1 \le \e_i \le B,
\end{equation}
for $i=1, \dots, r+s$.

For $\ell = r+s-1, \dots, 0$, we define recursively
\begin{equation}\label{eq:V_integral}
  \begin{split}
    V_\ell(\e_1, \dots, \e_\ell;B) &= \int_{\e_{\ell+1}} V_{\ell+1}(\e_1,
    \dots, \e_{\ell+1};B)
    \dd \e_{\ell+1}\\
    &= \int_{\e_{\ell+1}, \dots, \e_{r+s}}V_{r+s}(\e_1, \dots, \e_{r+s}) \dd
    \e_{r+s} \cdots \dd \e_{\ell+1}
  \end{split}
\end{equation}
and assume that $V_\ell$ as a function $\e_{\ell}$ has a continuous derivative
whose sign changes only finitely often.

\begin{lemma}\label{lem:volume_bound}
  In the situation described above, we have, for $\ell \in \{1, \dots,
  s\}$, \[V_{r+\ell}(\e_1, \dots, \e_{r+\ell};B) \ll
  \frac{B^{1-A^{(\ell)}}}
  {\e_1^{1-A_{1}^{(\ell)}}\cdots\e_{r+\ell}^{1-A_{r+\ell}^{(\ell)}}}\]
  and, for $\ell \in \{1, \dots, r\}$, \[V_\ell(\e_1, \dots, \e_\ell;B) \ll
  \frac{B(\log B)^{r-\ell}}{\e_1\cdots\e_\ell}.\]
\end{lemma}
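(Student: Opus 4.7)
The plan is to prove both bounds by downward induction on $\ell$, providing the integral analogue of Lemma~\ref{lem:summation_V_theta_bounds}.

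For the first bound, the base case $\ell = s$ is exactly the hypothesis (\ref{eq:completion_bound}). For the inductive step, I compute $V_{r+\ell-1}$ by integrating $V_{r+\ell}$ over $\e_{r+\ell}$ via (\ref{eq:V_integral}). The key observation is that among the height conditions (\ref{eq:bound_s_completion}), those with $j < \ell$ do not involve $\e_{r+\ell}$ (since $k_{r+\ell,j} = 0$ by the standing assumptions), while those with $j > \ell$ have been consumed in the earlier integrations that produced $V_{r+\ell}$; only the condition with $j = \ell$ is newly active, restricting $\e_{r+\ell}$ to one side of $T = \bigl(B \big/ \prod_{i=1}^{r+\ell-1} \e_i^{k_{i,\ell}}\bigr)^{1/k_{r+\ell,\ell}}$ (from below if $k_{r+\ell,\ell} > 0$, from above if $k_{r+\ell,\ell} < 0$).

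Since $k_{r+\ell,j} = 0$ for $j < \ell$, we have $A_{r+\ell}^{(\ell)} = a_\ell k_{r+\ell,\ell}$, so in either sign case the integral $\int \e_{r+\ell}^{a_\ell k_{r+\ell,\ell} - 1} \dd \e_{r+\ell}$ evaluates to $\ll T^{a_\ell k_{r+\ell,\ell}} \big/ |a_\ell k_{r+\ell,\ell}|$. Substituting $T^{a_\ell k_{r+\ell,\ell}} = B^{a_\ell} \big/ \prod_{i=1}^{r+\ell-1} \e_i^{a_\ell k_{i,\ell}}$ and using the identities $A^{(\ell)} - a_\ell = A^{(\ell-1)}$ and $A_i^{(\ell)} - a_\ell k_{i,\ell} = A_i^{(\ell-1)}$ for $i \le r+\ell-1$ yields the bound for $V_{r+\ell-1}$. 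Running the induction one final step (with the constraint from $j = 1$) produces $V_r \ll B / (\e_1 \cdots \e_r)$, which is precisely the $\ell = r$ case of the second bound. For the second bound, I continue the downward induction from $\ell = r$ down to $\ell = 1$, now integrating over $\e_\ell$ subject to $1 \le \e_\ell \le B$ from (\ref{eq:bound_r_completion}). Each such step contributes $\int_1^B \e_\ell^{-1} \dd \e_\ell \ll \log B$, accounting for the accumulation of the factor $(\log B)^{r-\ell}$.

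The main obstacle is purely bookkeeping: tracking how $A^{(\ell)}$ and $A_i^{(\ell)}$ transform under each integration, and verifying that both signs of $k_{r+\ell,\ell}$ produce identical bounds. There is no conceptual difficulty beyond the observation that at step $\ell$ of the first induction only the $j = \ell$ height condition becomes newly relevant. The hypothesis that each $V_\ell$ has only finitely many monotonicity intervals is not used for these upper bounds; it will matter only when $V_\ell$ is fed into Proposition~\ref{prop:complete_summation_1} in subsequent arguments.
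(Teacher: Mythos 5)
Your proposal is correct and follows essentially the same route as the paper: the paper's proof simply states that the argument is the induction of Lemma~\ref{lem:summation_V_theta_bounds} with the sums replaced by integrals (so Lemma~\ref{lem:sum_over_n} is not needed), which is exactly the downward induction you carry out, including the use of the $j=\ell$ height condition to bound $\int \e_{r+\ell}^{a_\ell k_{r+\ell,\ell}-1}\dd\e_{r+\ell} \ll T^{a_\ell k_{r+\ell,\ell}}$ and the identities $A^{(\ell)}-a_\ell=A^{(\ell-1)}$, $A_i^{(\ell)}-a_\ell k_{i,\ell}=A_i^{(\ell-1)}$. Your closing remark that the monotonicity hypothesis on $V_\ell$ is irrelevant here is also accurate.
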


\begin{proof}
  The proof is analogous to the proof of
  Lemma~\ref{lem:summation_V_theta_bounds}, skipping the step of replacing
  sums by integrals via Lemma~\ref{lem:sum_over_n}.
\end{proof}

Recall the notation of Definition~\ref{def:arithmetic_function_with_average}
and Definition~\ref{def:arithmetic_function_r_with_average}.

\begin{definition}\label{def:arithmetic_function_r_summable}
  Let $C \in \RRnn$. Let $\Theta_{2,0}(C)$ be the set $\RR$ of real
  numbers. For any $r \in \ZZp$, we define $\Theta_{2,r}(C)$ recursively as
  the set of all functions $\theta: \ZZp^r \to \RR$ in the variables $\e_1,
  \dots, \e_r$ such that $\theta \in \Theta_{1,r}(C,\e_r)$ and $\theta'
  \in \Theta_{2,r-1}(C)$, where $\theta'(\e_1, \dots, \e_{r-1}) =
  \A(\theta(\e_1, \dots, \e_r),\e_r)$.

  For $\theta \in \Theta_{2,r}(C)$ and any pairwise distinct $i_1, \dots, i_n
  \in \{1, \dots, r\}$, we define
  \begin{equation*}
    \A(\theta(\e_1, \dots, \e_r),
    \e_{i_1}, \dots, \e_{i_n}) = \A(\dots\A(\theta(\e_1, \dots,
    \e_r),\e_{i_1})\dots,\e_{i_n});
  \end{equation*}
  it is a function in $\Theta_{2,r-n}(C)$.
\end{definition}

\begin{prop}\label{prop:several_summations}
  Let $V_{r+s}$ be as described before Lemma~\ref{lem:volume_bound}, and let
  $\theta_{r+s} \in \Theta_{2,r+s}(C)$ for some $C \in \RRnn$. Then
  \begin{multline*}
      \sum_{\e_1, \dots, \e_{r+s}} \theta_{r+s}(\e_1, \dots,
      \e_{r+s})V_{r+s}(\e_1, \dots, \e_{r+s};B)\\
      ={} c_0
      \int_{\e_1, \dots, \e_{r+s}} V_{r+s}(\e_1, \dots, \e_{r+s};B) \dd
      \e_{r+s}\cdots \dd \e_1\\
      + O\left(B(\log B)^{r-1}(\log \log B)^{\max\{1,s\}}\right),
  \end{multline*}
  where $c_0=\A(\theta_{r+s}(\e_1, \dots, \e_{r+s}),\e_{r+s}, \dots, \e_1)$.
\end{prop}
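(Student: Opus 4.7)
The plan is to apply Proposition~\ref{prop:complete_summation_1} iteratively, peeling off the sums over $\e_{r+s}, \e_{r+s-1}, \dots, \e_1$ in that order. The class $\Theta_{2,r+s}(C)$ of Definition~\ref{def:arithmetic_function_r_summable} is designed precisely for this iteration: membership in $\Theta_{2,\ell}(C)$ both entails membership in $\Theta_{1,\ell}(C,\e_\ell)$ (the arithmetic hypothesis of Proposition~\ref{prop:complete_summation_1} when summing $\e_\ell$) and guarantees $\A(\cdot,\e_\ell) \in \Theta_{2,\ell-1}(C)$, so the process can be repeated.

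Concretely, I would set $\theta_\ell = \A(\theta_{r+s},\e_{r+s},\dots,\e_{\ell+1})$ for $\ell = 0, 1, \dots, r+s$, so that $\theta_0 = c_0$, with $V_\ell$ as in (\ref{eq:V_integral}). The goal is to prove, by downward induction on $\ell$, the identity
\[
\sum_{\e_1,\dots,\e_{r+s}}\theta_{r+s}V_{r+s} \;=\; \sum_{\e_1,\dots,\e_\ell}\theta_\ell V_\ell \;+\; O\bigl(B(\log B)^{r-1}(\log\log B)^{\max\{1,s\}}\bigr).
\]
The base case $\ell = r+s$ is trivial; the case $\ell = 0$ (where $V_0 = \int V_{r+s}\,\dd\e_{r+s}\cdots\dd\e_1$) is the conclusion.

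The inductive step is an application of Proposition~\ref{prop:complete_summation_1} to the inner sum over $\e_\ell$, viewing $\e_\ell$ as the summation variable ``$\e_0$'' of that statement. The arithmetic hypothesis is inherited from $\theta_\ell \in \Theta_{2,\ell}(C) \subset \Theta_{1,\ell}(C,\e_\ell)$; the bound and support conditions on $V_\ell$ are furnished by Lemma~\ref{lem:volume_bound} together with (\ref{eq:bound_s_completion}) and (\ref{eq:bound_r_completion}); and the regularity of $V_\ell$ as a function of $\e_\ell$ is part of the assumed setup. The proposition then extracts the main term $\A(\theta_\ell,\e_\ell)\int V_\ell\,\dd\e_\ell = \theta_{\ell-1}V_{\ell-1}$ and an error $R_\ell$ whose sum over the remaining $\ell-1$ variables it controls.

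The main obstacle is accumulating the $r+s$ individual error contributions into the single bound claimed. For the $s$ ``complex'' steps ($\ell = r+k$ with $k \ge 1$), $V_{r+k}$ matches the hypotheses of Proposition~\ref{prop:complete_summation_1} with $s$ replaced by $k-1$ via Lemma~\ref{lem:volume_bound}, giving a contribution $\ll B(\log B)^{r}(\log\log B)^{\max\{1,k-1\}}$; here one must exploit the nested structure (and the hypothesis $r \ge s$) to recover an extra factor $\log B$ beyond the naive estimate. For the $r$ ``simple'' steps ($\ell \le r$), the bound $V_\ell \ll B(\log B)^{r-\ell}/(\e_1\cdots\e_\ell)$ from Lemma~\ref{lem:volume_bound} carries an additional logarithmic factor not present in (\ref{eq:generic_Vbound}); I would absorb this factor by tracking the constants in the proof of Proposition~\ref{prop:complete_summation_1} (applied with $s=0$ and $\ell-1$ simple variables), obtaining $(\log B)^{r-\ell}\cdot B(\log B)^{\ell-1}\log\log B = B(\log B)^{r-1}\log\log B$ per step. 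Summing the $r+s$ contributions yields the stated total error.
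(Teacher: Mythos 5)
Your overall strategy is exactly the paper's: iterate Proposition~\ref{prop:complete_summation_1} downwards over $\ell=r+s,\dots,1$, with $\theta_{\ell-1}=\A(\theta_\ell,\e_\ell)$ and $V_{\ell-1}=\int V_\ell\dd\e_\ell$, using the recursive definition of $\Theta_{2,\ell}(C)$ to keep the arithmetic hypothesis alive at every step. Your treatment of the steps $\ell\le r$ also matches the paper (which simply applies the proposition with parameters $(\ell-1,0)$ to $V_\ell/(\log B)^{r-\ell}$, a cleaner form of your ``track the constants'' remark, legitimate because the implied constants are independent of $B$ by Remark~\ref{rem:implied_constants}).

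The gap is in the steps $\ell=r+k$ with $k\ge 1$. You propose to apply the proposition with the $r$ simple variables $\e_1,\dots,\e_r$ and only $k-1$ complex variables, concede that this yields $B(\log B)^{r}(\log\log B)^{\max\{1,k-1\}}$ (too big by a factor $\log B$), and assert that ``the nested structure and $r\ge s$'' recover the loss --- without saying how. This is precisely what the paper's Table~\ref{tab:complete_summation} settles: the proposition must be applied with parameters $(r-1,k)$, not $(r,k-1)$. Concretely, the variable $\e_{\ell-s}=\e_{r-s+k}$ is moved out of the ``simple'' group into the ``complex'' group: by the hypotheses before Lemma~\ref{lem:volume_bound} it carries the nonzero exponent $k_{r-s+k,k}$ in the $k$-th condition of (\ref{eq:bound_s_completion}) and zero exponent in the conditions $j<k$, so once $\e_\ell$ is removed as the summation variable it can serve as the terminal variable of the $k$-th height condition in the triangular ordering that Proposition~\ref{prop:complete_summation_1} requires; the hypothesis $r\ge s$ enters exactly here, guaranteeing $1\le\ell-s\le r$. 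With only $r-1$ simple variables the error becomes $B(\log B)^{r-1}(\log\log B)^{\max\{1,k\}}\ll B(\log B)^{r-1}(\log\log B)^{\max\{1,s\}}$, as needed. Without this redistribution (and the accompanying bound of Lemma~\ref{lem:volume_bound}, which encodes all $k$ conditions, not $k-1$) your estimate loses a factor of $\log B$ at every step with $\ell>r$, and the claimed total error does not follow.
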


\begin{proof}
  We proceed by induction as follows, for $\ell=r+s, \dots, 1$. Given
  $\theta_\ell \in \Theta_{2,\ell}(C)$, we define $\theta_{\ell-1} \in
  \Theta_{2,\ell-1}(C)$ by
  \begin{equation*}
    \begin{split}
      \theta_{\ell-1}(\e_1, \dots, \e_{\ell-1}) &= \A(\theta_{\ell}(\e_1,
      \dots,
      \e_{\ell}), \e_{\ell})\\
      &= \A(\theta_{r+s}(\e_1, \dots, \e_{r+s}),\e_{r+s}, \dots, \e_{\ell}).
    \end{split}
  \end{equation*}

\begin{table}[ht]
  \centering
  \[\begin{array}[h]{|c||c|c|}
    \hline
    \text{Proposition~\ref{prop:complete_summation_1}} & \ell \in
    \{1, \dots, r\} & \ell \in \{r+1, \dots, r+s\} \\
    \hline\hline
    (r,s) & (\ell-1,0) & (r-1,\ell-r) \\
    \hline
    \e_0 & \e_\ell & \e_\ell\\
    \e_1, \dots, \e_r & \e_1, \dots, \e_{\ell-1} & \e_1, \dots,
    \e_{\ell-s-1}, \e_{\ell-s+1}, \dots, \e_r\\
    \e_{r+s}, \dots, \e_{r+s} & - & \e_{r+1}, \dots, \e_{\ell-1},
    \e_{\ell-s}\\
    \hline
    \theta \in \Theta_{1,r+s+1}(C) & \theta_\ell \in \Theta_{2,\ell}(C) &
    \theta_\ell \in \Theta_{2,\ell}(C)\\
    \A(\theta(\e_0, \dots, \e_{r+s}),\e_0) & \theta_{\ell-1} \in
    \Theta_{2,\ell-1}(C) & \theta_{\ell-1} \in \Theta_{2,\ell-1}(C) \\
    \hline
    V & V_\ell/(\log B)^{r-\ell} & V_\ell\\
    V' & V_{\ell-1}/(\log B)^{r-\ell} & V_{\ell-1}\\
    \hline
    k_{0,j}, k_{1,j}, \dots, k_{r+s,j} & - & k_{1,j}, \dots, k_{\ell,j}\\
    & & \text{arranged as $\e_1, \dots, \e_\ell$,}\\
    \hline
    A; A_0, A_1, \dots, A_{r+s} & - & \text{$A^{(\ell-r)}$;
      $A_{1}^{(\ell-r)}, \dots, A_{\ell}^{(\ell-r)}$} \\
    & & \text{arranged as $\e_1,
      \dots, \e_\ell$,}\\
    \hline
    (\ref{eq:generic_Vbound}) & \text{Lemma~\ref{lem:volume_bound}} &
    \text{Lemma~\ref{lem:volume_bound}}\\
    (\ref{eq:height_s}) & - &
    (\ref{eq:bound_s_completion})\\
    (\ref{eq:height_r}) & (\ref{eq:bound_r_completion}) &
    (\ref{eq:bound_r_completion})\\
    \hline
  \end{array}\]
  \caption{Application of Proposition~\ref{prop:complete_summation_1}.}
  \label{tab:complete_summation}
\end{table}

  With $V_\ell, V_{\ell-1}$ as in (\ref{eq:V_integral}), we apply
  Proposition~\ref{prop:complete_summation_1} to show that
  \begin{multline*}
    \sum_{\e_\ell} \theta_{\ell}(\e_1, \dots, \e_\ell)V_\ell(\e_1, \dots,
    \e_\ell;B)\\ = \theta_{\ell-1}(\e_1, \dots, \e_{\ell-1})V_{\ell-1}(\e_1,
    \dots, \e_{\ell-1};B) + R(\e_1, \dots, \e_{\ell-1};B),
  \end{multline*}
  where
  \begin{equation*}
    \sum_{\e_1, \dots, \e_{\ell-1}} R(\e_1, \dots, \e_{\ell-1};B) \ll B(\log
      B)^{r-1}(\log \log B)^{\max\{1,\ell-r\}}.
  \end{equation*}
  How to apply Proposition~\ref{prop:complete_summation_1} (especially with
  respect to the order of the variables $\e_1, \dots, \e_{\ell}$) depends on
  whether $1 \le \ell \le r$ or $r+1 \le \ell \le r+s$; furthermore, there are
  many prerequisites to check. Therefore, we have listed the details for the
  application of Proposition~\ref{prop:complete_summation_1} in
  Table~\ref{tab:complete_summation}.
\end{proof}

\begin{remark}
  An analogous result to Proposition~\ref{prop:several_summations} holds if we
  want to estimate $\theta_{r+1}(\e_1, \dots, \e_{r+1})V_{r+1}(\e_1, \dots,
  \e_{r+1};B)$ summed over $\e_1, \dots, \e_{r+1}$, but with
  (\ref{eq:completion_bound}) and (\ref{eq:bound_s_completion}) replaced by a
  bound analogous to (\ref{eq:generic_Vbound_double}). In the proof, we apply
  Proposition~\ref{prop:complete_summation_2} instead of
  Proposition~\ref{prop:complete_summation_1} in the first summation over
  $\e_{r+1}$.
\end{remark}

\section{Real-valued functions}

The following result is often useful to derive bounds such as
(\ref{eq:generic_Vbound}), (\ref{eq:generic_Vbound_double}) and
(\ref{eq:completion_bound}) for real-valued functions defined through certain
integrals; for example, we recover the bounds of
\cite[Lemma~8]{arXiv:0710.1560}.

\begin{lemma}\label{lem:generic_bounds}
  Let $a,b \in \RRnz$. Then we have the following bounds.
  \begin{enumerate}[(1)]
  \item\label{it:generic_bounds_a} $\int_{|at^2+b|\le 1} \dd t \ll
    \min\{|a|^{-1/2},|ab|^{-1/2}\}$.
  \item\label{it:generic_bounds_a1} $\int_{|at^2u+bu^k| \le 1} \dd t \dd u \ll
    |ab^{1/k}|^{-1/2}$.
  \item\label{it:generic_bounds_a2} $\int_{|at^2+bu^k| \le 1} \dd t \dd u \ll
    |a|^{-1/2}|b|^{-1/k}$, for $k > 2$.
  \item\label{it:generic_bounds_b} $\int_{|at^2+bt| \le 1} \dd t \ll
    \min\{|a|^{-1/2},|b|^{-1}\}$.
  \item\label{it:generic_bounds_b1} $\int_{|at^2u+btu^2| \le 1} \dd t \dd u \ll
    |ab|^{-1/3}$.
  \item\label{it:generic_bounds_b2} $\int_{|at^2+btu^k| \le 1} \dd t \dd u \ll
    |a|^{-(k-1)/(2k)}|b|^{-1/k}$, for $k > 1$.
  \end{enumerate}
\end{lemma}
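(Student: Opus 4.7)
The plan is to treat (\ref{it:generic_bounds_a}) and (\ref{it:generic_bounds_b}) as one-dimensional base cases proved by direct analysis, and then reduce each of the four double integrals (\ref{it:generic_bounds_a1}), (\ref{it:generic_bounds_a2}), (\ref{it:generic_bounds_b1}), (\ref{it:generic_bounds_b2}) to the appropriate base case by fixing $u$ and applying it to the inner $t$-integral, followed by a split of the outer integral at the crossover point of the two branches of the minimum.

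For (\ref{it:generic_bounds_a}), the condition $|at^2+b|\le 1$ confines $t^2$ to an interval of length $2/|a|$, which directly gives the bound $|a|^{-1/2}$ on the length of the admissible set of $t$. For the second bound, when $|b|\ge 2$ (and $b/a<0$, else the set is empty) the admissible $t$ lies in a thin neighbourhood of $\pm\sqrt{-b/a}$; since $t^2\in[(|b|-1)/|a|,(|b|+1)/|a|]$, the mean value theorem applied to $x\mapsto\sqrt{x}$ yields a length in $t$ of order $|a|^{-1}(|b|/|a|)^{-1/2}=|ab|^{-1/2}$. Case (\ref{it:generic_bounds_b}) reduces to (\ref{it:generic_bounds_a}) by completing the square: the substitution $s=t+b/(2a)$ turns the condition into $|as^2-b^2/(4a)|\le 1$, and (\ref{it:generic_bounds_a}) now gives $\min\{|a|^{-1/2},|a\cdot b^2/(4a)|^{-1/2}\}=\min\{|a|^{-1/2},|b|^{-1}\}$.

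The four double integrals follow an identical template. Fixing $u$, the expression under $|\cdot|$ is a quadratic in $t$, so the inner integral is bounded using (\ref{it:generic_bounds_a}) in cases (\ref{it:generic_bounds_a1}) and (\ref{it:generic_bounds_a2}), and using (\ref{it:generic_bounds_b}) (after factoring a power of $u$ out and absorbing it into a rescaled right-hand side) in cases (\ref{it:generic_bounds_b1}) and (\ref{it:generic_bounds_b2}). This produces a minimum of two monomials in $u$, and I split the outer integral at the crossover $u_0$ where the two monomials agree: for (\ref{it:generic_bounds_a1}) the inner bound is $\min\{|au|^{-1/2},|abu^{k+1}|^{-1/2}\}$ with $u_0=|b|^{-1/k}$; for (\ref{it:generic_bounds_a2}) it is $\min\{|a|^{-1/2},|abu^k|^{-1/2}\}$ with $u_0=|b|^{-1/k}$; for (\ref{it:generic_bounds_b1}) it is $\min\{|au|^{-1/2},|bu^2|^{-1}\}$ with $u_0=(|a|/|b|^2)^{1/3}$; and for (\ref{it:generic_bounds_b2}) it is $\min\{|a|^{-1/2},|bu^k|^{-1}\}$ with $u_0=|a|^{1/(2k)}|b|^{-1/k}$. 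A routine computation of the two resulting elementary integrals shows that both sides of $u_0$ contribute the stated bound; the hypotheses $k>2$ in (\ref{it:generic_bounds_a2}) and $k>1$ in (\ref{it:generic_bounds_b2}) are exactly the convergence conditions for the outer integral at infinity on the side $u\ge u_0$.

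I do not expect a genuine obstacle here. The main care is bookkeeping in the four double-integral cases: one must verify that both pieces of each split give matching exponents, so that neither piece dominates and the simple form of the stated bound emerges. Negative values of $u$ are handled by symmetry.
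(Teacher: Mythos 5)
Your proposal is correct and follows essentially the same route as the paper: the one-dimensional bounds (1) and (4) via the length of the preimage of an interval under $t\mapsto t^2$ (the paper does (4) directly by the same completed-square computation rather than quoting (1)), and then each double integral by bounding the inner $t$-integral and splitting the $u$-integral at exactly the crossover points you list. The computations all check out, so no further comment is needed.
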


\begin{proof}
  We treat only the case $a>0$; its opposite is essentially the same.

  For \eqref{it:generic_bounds_a}, we consider $t$ such that $|at^2+b| \le 1$;
  if there is no such $t$, the claim is obvious. Otherwise, suppose first $|b|
  \le 2$. Then $|at^2+b| \le 1$ implies $|at^2| \le 3$, i.e., $t \ll
  |a|^{-1/2} \ll |ab|^{-1/2}$. Next, suppose $|b| > 2$. Obviously $b>2$ is
  impossible, so we assume $b<-2$. Then $|at^2+b| \le 1$ implies
  \[\sqrt{\frac{-b-1}{a}} \le t \le \sqrt{\frac{-b+1}{a}}.\] We note that the
  condition $\sqrt{x} \le t \le \sqrt{x+y}$ for $x,y>0$ describes an interval
  of length $\ll x^{-1/2}y$. Here $x=(b-1)/a>b/(2a)$ and $y=2/a$, so the
  interval for $t$ has length $\ll |ab|^{-1/2} \ll |a|^{-1/2}$.

  For \eqref{it:generic_bounds_a1}, we apply \eqref{it:generic_bounds_a} and
  obtain 
  \begin{multline*}
    \int_{|at^2u+bu^2| \le 1} \dd t \dd u \ll \int_0^\infty
    \min\{|au|^{-1/2},|abu^{k+1}|^{-1/2}\} \dd u\\
    \ll \int_0^{|b|^{-1/k}} |au|^{-1/2} \dd u + \int_{|b|^{-1/k}}^\infty
    |abu^{k+1}|^{-1/2} \dd u \ll \frac{1}{|ab^{1/k}|^{1/2}}.
  \end{multline*}

  Similarly, for \eqref{it:generic_bounds_a2}, we get 
  \begin{multline*}
    \int_{|at^2+bu^k| \le 1}\dd t \dd u \ll \int_0^\infty
    \min\{|au|^{-1/2},|abu^3|^{-1/2}\} \dd u\\
    \ll \int_0^{|b|^{-1/k}} |a|^{-1/2} \dd u + \int_{|b|^{-1/k}}^\infty
    |abu^k|^{-1/2} \dd u \ll \frac{1}{|a|^{1/2}|b|^{1/k}}.
  \end{multline*}

  For \eqref{it:generic_bounds_b}, we transform $|at^2+bt| \le 1$ to
  \[\sqrt{\max\left\{0,\frac{b^2-4a}{4a^2}\right\}} \le |t+b/(2a)| \le
  \sqrt{\frac{b^2+4a}{4a^2}}.\] If $b^2 \le 8a$ then $((b^2+4a)/(4a^2))^{1/2}
  \ll |a|^{-1/2} \ll |b|^{-1}$, which is also a bound for the length of the
  interval of allowed values of $t$. If $b^2 > 8a$, then we apply the above
  bound for $x = (b^2-4a)/(4a^2)>b^2/(8a^2)$ and $y=2/a$ to conclude that the
  interval for $t$ has length $\ll |b|^{-1} \ll |a|^{-1/2}$.

  For \eqref{it:generic_bounds_b1}, we apply \eqref{it:generic_bounds_b} to
  conclude 
  \begin{multline*}
    \int_{|at^2u+btu^2| \le 1} \dd t \dd u \ll
    \int_0^\infty \min\{|au|^{-1/2},|bu^2|^{-1}\} \dd u\\
    \ll \int_0^{|a/b^2|^{1/3}} |au|^{-1/2} \dd u +
    \int_{|a/b^2|^{1/3}}^\infty |bu^2|^{-1} \dd u \ll \frac{1}{|ab|^{1/3}}.
  \end{multline*}

  For \eqref{it:generic_bounds_b2}, we have 
  \begin{multline*}
    \int_{|at^2+btu^k| \le 1} \dd t \dd u \ll \int_0^\infty
    \min\{|a|^{-1/2},|bu^k|^{-1}\} \dd u\\
    \ll \int_0^{|a^{1/2}/b|^{1/k}} |a|^{-1/2} \dd
    u+\int_{|a^{1/2}/b|^{1/k}}^\infty |bu^k|^{-1} \dd u \ll
    \frac{1}{|a|^{(k-1)/(2k)}|b|^{1/k}}.
  \end{multline*}
  This completes the proof.
\end{proof}

\section{Arithmetic functions in one variable}

In Section~\ref{sec:generic_errors} and Section~\ref{sec:generic_completion},
we were interested in the average size of arithmetic functions on intervals,
with certain bounds on the error term.

In this section, we describe a set of functions in one variable
(Definition~\ref{def:function_small}) for which this information is computable
explicitly (by Corollary~\ref{cor:function_small_has_average}). This includes
the functions $f_{a,b}$ treated in \cite[Lemma~1]{arXiv:0710.1560} (see
Example~\ref{exa:f_ab}).

\begin{lemma}\label{lem:sum_arithmetic_general}
  Let $\theta: \ZZp \to \RR$ be a function, and let $t,y \in \RRnn$, with $y
  \le t$. Let $a,q \in \ZZp$, with $\cp a q$.  If the infinite sum \[\sums{d
    > 0\\\cp d q} \frac{(\theta*\mu)(d)}{d}\] converges to $c_0 \in \RR$, we
  have
  \begin{multline*}
    \sum_{\substack{0 < n \le t\\\congr n a q}} \theta(n) =
    \frac{c_0t}{q} + O\left(\sum_{\substack{0 < d \le y\\\cp d q}}
      |(\theta*\mu)(d)|\right.\\\left.  + \frac t q \cdot
      \Bigg|\sum_{\substack{d > y\\\cp d q}} \frac{(\theta*\mu)(d)}{d}\Bigg| +
      \sum_{\substack{0 < n < t/y\\\cp n q}} \Bigg|\sum_{\substack{y < d <
          t/n\\\congr{nd}a q}} (\theta*\mu)(d)\Bigg|\right).
  \end{multline*}
\end{lemma}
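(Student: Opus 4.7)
The plan is to apply M\"obius inversion to $\theta$, swap the order of summation, and estimate the inner sum via the natural density coming from the congruence condition. Concretely, since $\theta = (\theta*\mu)*\mathbf{1}$, we have
\[
\sum_{\substack{0<n\le t\\ \congr{n}{a}{q}}}\theta(n)
= \sum_{\substack{0<n\le t\\ \congr{n}{a}{q}}}\sum_{d\mid n}(\theta*\mu)(d)
= \sum_{d>0}(\theta*\mu)(d)\#\{m\in\ZZp : dm\le t,\ \congr{dm}{a}{q}\}.
\]
Because $\cp a q$, the congruence $\congr{dm}{a}{q}$ forces $\cp d q$, and in that case $m$ is determined modulo $q$, giving $\tfrac{t}{dq}+O(1)$ solutions $m\le t/d$. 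Hence, separating $d\le y$ from $d>y$:
\[
\sum_{\substack{0<n\le t\\ \congr{n}{a}{q}}}\theta(n)
= \frac{t}{q}\sums{0<d\le y\\ \cp d q}\frac{(\theta*\mu)(d)}{d}
+ O\Bigl(\sums{0<d\le y\\ \cp d q}|(\theta*\mu)(d)|\Bigr)
+ S_{>y},
\]
where $S_{>y}$ collects the contribution from $d>y$.

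Next I would use the hypothesis that $\sum_{d>0,\ \cp d q}(\theta*\mu)(d)/d$ converges to $c_0$ in order to replace the truncated sum in the main term by $c_0$ at the cost of the tail:
\[
\frac{t}{q}\sums{0<d\le y\\ \cp d q}\frac{(\theta*\mu)(d)}{d}
= \frac{c_0 t}{q} - \frac{t}{q}\sums{d>y\\ \cp d q}\frac{(\theta*\mu)(d)}{d},
\]
which is absorbed in the second error term of the statement. For $S_{>y}$, I would swap the order of summation: since $d>y$ and $dm\le t$ force $m<t/y$, and since $\congr{dm}{a}{q}$ with $\cp a q$ forces $\cp m q$, we may write
\[
S_{>y} = \sums{0<m<t/y\\ \cp m q}\ \sums{y<d<t/m\\ \congr{nd}{a}{q}}(\theta*\mu)(d),
\]
after renaming $m$ to $n$ in the outer sum to match the notation of the statement; bounding this in absolute value yields exactly the third error term.

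The argument is essentially bookkeeping once M\"obius inversion is applied, and the only points that require care are (i) checking that the condition $\cp d q$ arises automatically from the solvability of the congruence $\congr{dm}{a}{q}$ (so we do not spuriously pick up a factor), and (ii) justifying the interchange of summation in $S_{>y}$, which is finite and thus unconditional. Collecting the three error contributions and the main term $c_0 t/q$ gives the stated identity.
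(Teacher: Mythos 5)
Your proposal is correct and follows essentially the same route as the paper: Möbius inversion via $\theta=(\theta*\mu)*\mathbf{1}$, interchange of summation, the count $\tfrac{t}{dq}+O(1)$ for the congruence class when $d\le y$, completion of the truncated sum to $c_0$ at the cost of the tail, and the double-sum rewriting of the $d>y$ contribution. No substantive differences to report.
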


\begin{proof}
  Since $\theta = (\theta*\mu)*1$, we have
  \begin{equation*}
    \sum_{\substack{0 < n \le t\\\congr n a q}} \theta(n) =
    \sum_{\substack{0 < n \le t\\\congr n a q}} \sum_{d \div n}
    (\theta*\mu)(d)= \sum_{\substack{0 < d \le t\\\cp d q}}
    \sum_{\substack{0 < n' \le t/d\\\congr{n'd}a q}}
    (\theta*\mu)(d).
  \end{equation*}
  Splitting this sum into the cases $d \le y$ and its opposite, we get
  \begin{equation*}
    = \sum_{\substack{0 < d \le y\\\cp d q}} (\theta*\mu)(d) \cdot
    \left(\frac t{qd}+O(1)\right) + \sums{0 < n' \le
        t/y\\\cp {n'}q} 
    \sums{y < d < t/n'\\\congr{n'd}a q} (\theta*\mu)(d),
  \end{equation*}
  and the result follows.
\end{proof}

\begin{lemma}\label{lem:sum_arithmetic}
  Let $C \in \RR_{\ge 1}$. Let $\theta: \ZZp \to \RR$ be such that, for any $t
  \in \RRnn$,
  \begin{equation*}
    \sum_{0 < n \le t} |(\theta*\mu)(n)|\cdot n \le t(\log(t+2))^{C-1}.
  \end{equation*}
  Then, for any $q \in \ZZp$ and $a \in \ZZ$ with $\cp a q$, the real number
  $c_0$ as in Lemma~\ref{lem:sum_arithmetic_general} exists, and
  \begin{equation*}
    \sum_{\substack{0 < n \le t\\\congr{n}{a}{q}}} \theta(n)=
    \frac{c_0t}{q} + O_C\left((\log(t+2))^{C}\right).
  \end{equation*}
\end{lemma}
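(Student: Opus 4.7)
The plan is to apply Lemma~\ref{lem:sum_arithmetic_general} with the choice $y = t$. Write $f := \theta * \mu$ for brevity, so the hypothesis reads $G(t) := \sum_{0 < d \le t} |f(d)|\cdot d \le t(\log(t+2))^{C-1}$. The reason $y = t$ is the right choice is that the third error term in Lemma~\ref{lem:sum_arithmetic_general}, the sum over $0 < n < t/y$, becomes empty, leaving only the first two terms to bound.

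First I would verify convergence of $c_0 = \sum_{d > 0,\ \cp d q} f(d)/d$. By Abel summation, $\sum_{d \le t} |f(d)|/d = \sum_{d \le t}(|f(d)|d)/d^2$ is $\ll G(t)/t^2 + \int_1^t G(u)/u^3 \dd u$, and plugging in the bound on $G$ produces, for any $y \ge 1$,
\[
\sum_{d > y,\ \cp d q} \frac{|f(d)|}{d} \ll \int_y^\infty \frac{(\log(u+2))^{C-1}}{u^2} \dd u \ll \frac{(\log(y+2))^{C-1}}{y},
\]
where the last step comes from integration by parts. This tail bound shows convergence of $c_0$ and gives, as a by-product, the estimate needed below for the second error term.

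Applying Lemma~\ref{lem:sum_arithmetic_general} with $y = t$, the first error term is
\[
\sum_{0 < d \le t,\ \cp d q} |f(d)| \le \sum_{0 < d \le t} \frac{|f(d)|\cdot d}{d} \ll \frac{G(t)}{t} + \int_1^t \frac{G(u)}{u^2}\dd u \ll (\log(t+2))^{C-1} + \int_1^t \frac{(\log(u+2))^{C-1}}{u}\dd u,
\]
and the last integral is $\ll (\log(t+2))^C$. The second error term is $(t/q)\cdot|\sum_{d > t,\ \cp d q} f(d)/d| \ll (\log(t+2))^{C-1}$ by the tail estimate just derived (with $y = t$). The third error term vanishes. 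Summing the contributions yields the claimed error $O_C((\log(t+2))^C)$.

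The only subtle point, rather than a genuine obstacle, is converting the weighted bound $\sum |f(d)| d \ll t(\log(t+2))^{C-1}$ into a bound on the unweighted sum $\sum |f(d)|$; this is where the extra logarithmic factor (pushing the exponent from $C-1$ to $C$) is paid, and it is this term that dictates the exponent in the final error.
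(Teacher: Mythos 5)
Your proposal is correct and follows essentially the same route as the paper: apply Lemma~\ref{lem:sum_arithmetic_general} with $y=t$ so the third error term is empty, then bound the first term by $(\log(t+2))^{C}$ and the tail term by $(\log(t+2))^{C-1}$ via partial summation on the weighted hypothesis. The only cosmetic difference is that you carry out the Abel summation by hand where the paper invokes its Lemma~\ref{lem:sum_over_n} with $\kappa=1$ and $\kappa=2$.
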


\begin{proof}
  We apply Lemma~\ref{lem:sum_arithmetic_general}, with $y=t$. It remains to
  handle the error term, whose third part clearly vanishes.  By
  Lemma~\ref{lem:sum_over_n} and our assumption on $\theta$, the first part of
  the error term is
  \begin{equation*}
    \sum_{0 < n \le t} |(\theta*\mu)(n)| \ll_C
    (\log(t+2))^{C},
  \end{equation*}
  and the second part of the error term is
  \begin{equation*}
    \frac{t}{q}\sum_{n > t} \frac{|(\theta*\mu)(n)|}{n} \ll_C
    q^{-1}(\log(t+2))^{C-1}.
  \end{equation*}
  This completes the proof.
\end{proof}

\begin{remark}
  For infinite products, we use the following convention.  We require that the
  partial products of all non-vanishing factors of an infinite product
  converge to a non-zero number. If there are any vanishing factors, the value
  of the infinite product is zero. Otherwise, the infinite product cannot
  converge to zero.
\end{remark}

Let $\P$ denote the set of all primes.

\begin{definition}\label{def:function_product}
  Let $\Theta_1$ be the set of all non-negative functions $\theta: \ZZp \to
  \RR$ such that there is a $c\in \RR$ and a system of non-negative functions
  $A_p: \ZZnn \to \RR$ for $p \in \P$ satisfying
  \begin{equation*}
    \theta(n) = c \prodpdd{\nu}{n}{A_p(\nu)}\prodpn{n}{A_p(0)}
  \end{equation*}
  for all $n \in \ZZ$ (where the first product is over all $p \in \P$ and $\nu
  \in \ZZp$ such that $p^\nu \div n$ but $p^{\nu+1} \ndiv n$). In this
  situation, we say that $\theta \in \Theta_1$ \emph{corresponds to} $c,A_p$.
\end{definition}

\begin{lemma}\label{lem:theta_product_uniqueness}
  Suppose $\theta \in \Theta_1$ is not identically zero and corresponds to $c,
  A_p$ and $c',A'_p$. Then there are unique $b_p \in \RRp$, for $p \in \P$,
  such that $\prodp{b_p}$ converges to a number $b_0 \in \RRp$, $A'_p(\nu) =
  b_p A_p(\nu)$ for all $p \in \P$, $\nu \in \ZZnn$, and $c' = c/b_0$.

  Conversely, given $\theta \in \Theta_1$ corresponding to $c, A_p$, and
  $b_p \in \RRp$, for $p \in \P$, such that $b_0 = \prodp{b_p} \in \RRp$
  exists. Then $\theta$ also corresponds to $c', A'_p$ defined as $c' =
  c/b_0$ and $A'_p(\nu) = b_pA_p(\nu)$ for all $p \in \P$, $\nu \ge 0$.
\end{lemma}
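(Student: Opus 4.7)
The plan is to deduce the relations $A'_p(\nu) = b_p A_p(\nu)$ by evaluating $\theta$ at suitably chosen integers via both product representations. Since $\theta \not\equiv 0$, I would first pick $n_0 \in \ZZp$ with $\theta(n_0) \neq 0$; by the convention on infinite products in the remark preceding the definition, this forces $A_p(0), A'_p(0) \in \RRp$ for every prime $p$ with $p \ndiv n_0$. For such a prime $p$ and every $\nu \geq 0$, evaluating the two product formulas for $\theta$ at $p^\nu n_0$ and at $n_0$ and taking the quotient gives
\[
  \frac{\theta(p^\nu n_0)}{\theta(n_0)} \;=\; \frac{A_p(\nu)}{A_p(0)} \;=\; \frac{A'_p(\nu)}{A'_p(0)},
\]
since every factor other than the one at $p$ cancels between numerator and denominator. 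Setting $b_p := A'_p(0)/A_p(0) \in \RRp$ then yields $A'_p(\nu) = b_p A_p(\nu)$ for all $p \ndiv n_0$ and all $\nu \geq 0$.

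For the finitely many primes $p \div n_0$, I would repeat the argument after substituting a different reference integer (chosen coprime to $p$ on which $\theta$ does not vanish); if no such integer exists, one uses instead that $\theta \not\equiv 0$ forces some $A_p(\nu)$ and $A'_p(\nu)$ to be nonzero, and $b_p$ is then pinned down by the corresponding ratio. Once all $b_p$ are defined, I would verify that $\prod_p b_p$ converges to some $b_0 \in \RRp$: by the convention, both $\prod_p A_p(0)$ and $\prod_p A'_p(0)$ converge (on their non-vanishing factors) to positive reals, so their quotient $\prod_p b_p$ converges to $b_0 = \prod_p A'_p(0)/\prod_p A_p(0) \in \RRp$. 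Comparing the two defining formulas at any $n$ with $\theta(n) \neq 0$ then forces $c' = c/b_0$, and uniqueness of the $b_p$ is immediate from the constraints above.

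The converse is a direct substitution: given $\theta$ corresponding to $c, A_p$ and $b_p \in \RRp$ with $b_0 = \prod_p b_p \in \RRp$, setting $c' = c/b_0$ and $A'_p(\nu) = b_p A_p(\nu)$ gives
\[
  c' \prod_{p^\nu \ddiv n} A'_p(\nu) \prod_{p \ndiv n} A'_p(0) \;=\; \frac{c}{b_0} \Bigl(\prod_p b_p\Bigr) \prod_{p^\nu \ddiv n} A_p(\nu) \prod_{p \ndiv n} A_p(0) \;=\; \theta(n),
\]
where the factor $\prod_p b_p$ splits into contributions from the primes dividing $n$ and those not dividing $n$. The main obstacle is the bookkeeping when some $A_p(0) = 0$: in that case $b_p$ cannot be extracted from a ratio at $\nu = 0$, and one must argue via a different reference integer or via higher $\nu$. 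The flexibility in the choice of $n_0$, together with the fact that $\theta \not\equiv 0$ forces at least one value of $A_p(\nu)$ to be nonzero for every prime $p$, is what resolves this.
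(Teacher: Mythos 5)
Your proposal is correct and follows essentially the same route as the paper: extract $b_p$ as a ratio of $A'_p$ to $A_p$ at an exponent where they are nonzero, propagate the relation to all $\nu$ by comparing $\theta$ at integers differing only in their $p$-part, and deduce convergence of $\prod_p b_p$ and the relation $c'=c/b_0$ from the convergence of the restricted products. The paper avoids your case distinction between $p\ndiv n_0$ and $p\div n_0$ by uniformly setting $b_p = A'_p(k(p))/A_p(k(p))$ with $p^{k(p)}\ddiv n_0$ and using the single identity $A_p(\nu)/A_p(k(p)) = \theta(p^{\nu-k(p)}n_0)/\theta(n_0) = A'_p(\nu)/A'_p(k(p))$, which is exactly your ``higher $\nu$'' fallback applied from the start.
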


\begin{proof}
  Fix $n = \prodp{p^{k(p)}} \in \ZZp$ such that $\theta(n) \ne 0$. Then
  $A_p(k(p))$ and $A'_p(k(p))$ are non-zero, so $b_p \in \RRp$ is
  uniquely defined as
  $A'_p(k(p))/A_p(k(p))$. Since \[\frac{A_p(\nu)}{A_p(k(p))} =
  \frac{\theta(p^{\nu-k(p)}n)}{\theta(n)} = \frac{A'_p(\nu)}{A'_p(k(p))},\] we
  have $A'_p(\nu) = b_p A_p(\nu)$ for all $\nu \in \ZZnn$.

  Since $\prodpn{n}{A_p(0)}$ and $\prodpn{n}{A'_p(0)}$ are well-defined
  non-zero numbers, also $\prodpn{n}{b_p} \in \RRp$ and therefore $b_0 \in
  \RRp$ exist. Since
  \begin{equation*}
    \theta(n) = c' \prodpdd{\nu}{n}{A'_p(\nu)}\prodpn{n}{A'_p(0)} = c'
    b_0 \prodpdd{\nu}{n}{A_p(\nu)}\prodpn{n}{A_p(0)},
  \end{equation*}
  we conclude that $c=c'b_0$.

  It is straightforward to check the converse statement.
\end{proof}

\begin{definition}\label{def:function_small}
  For any $b \in \ZZp$, $C_1,C_2,C_3\in \RR_{\ge 1}$, let
  $\Theta_2(b,C_1,C_2,C_3)$ be the set of all functions $\theta \in \Theta_1$
  for which there exist corresponding $c, A_p$ satisfying the following
  conditions.
  \begin{enumerate}[(1)]
  \item\label{it:function_small_1} For all $p \in \P$ and $\nu \ge 1$,
    \[|A_p(\nu)-A_p(\nu-1)| \le \begin{cases}
      C_1, &p^\nu \div b,\\
      C_2p^{-\nu}, &p^\nu \ndiv b;
    \end{cases}\] 
  \item\label{it:function_small_2} For all $k \in \ZZp$, we have
    $\left|c\prodpn{k}{A_p(0)}\right| \le C_3$.
  \end{enumerate}

  Given $\theta \in \Theta_2(b,C_1,C_2,C_3)$, we will see in
  Proposition~\ref{prop:theta_mu_involution} that, for any $q \in \ZZp$, the
  infinite product
  \begin{equation*}
    c\prodpnp{q}{\left(1-\frac 1 p\right) \sum_{\nu=0}^\infty
      \frac{A_p(\nu)}{p^\nu}} \prodpd{q} A_p(0)
  \end{equation*}
  converges to a real number, which we denote as $\A(\theta(n),n,q)$.
\end{definition}

If $A_p(\nu)=A_p(\nu+1)$ for all primes $p$ and all $\nu \ge 1$, then the
formula is simplified to
\begin{equation*}
  A(\theta(n),n,q) = c \prodpnp{q}{\left(1-\frac 1 p\right)A_p(0)+\frac 1
    p A_p(1)} \prodpd{q}{A_p(0)}.
\end{equation*}

We will see in Corollary~\ref{cor:function_small_has_average} how the notation
$\A(\theta(n),n,q)$ of Definition~\ref{def:function_small} is related to the
notation $\A(\theta(n),n)$ of
Definition~\ref{def:arithmetic_function_with_average}.

\begin{remark}
  If $\theta \in \Theta_2(b,C_1,C_2,C_3)$ corresponds to $c,A_p$ and
  $c',A_p'$, where $c,A_p$ satisfy conditions~\eqref{it:function_small_1},
  \eqref{it:function_small_2} of Definition~\ref{def:function_small}, then
  $c',A_p'$ do not necessarily satisfy these conditions. However, with $b_p
  \in \RRp$ as in Lemma~\ref{lem:theta_product_uniqueness}, if we replace
  $C_1,C_2,C_3$ by \[C_1\max_{p\div b}\{b_p\},\qquad C_2\max_p\{b_p\},\qquad
  C_3\prods{p\\|b_p|>1} b_p,\] then $c',A_p'$ satisfy
  conditions~\eqref{it:function_small_1}, \eqref{it:function_small_2}.
\end{remark}

In all statements regarding $\theta \in \Theta_2(b,C_1,C_2,C_3)$, we will mark
explicitly by subscripts if an implied constant in the notation $\ll$ and
$O(\dots)$ depends on any of $b, C_1, C_2, C_3$ or $\theta$. The reason is that
we will apply the results of this section in the following
Section~\ref{sec:arithmetic_r} to functions in several variables $\e_1, \dots,
\e_r$. As functions in $\e_r$, they will lie in $\Theta_2(b,C_1,C_2,C_3)$, but
(some of) $b,C_1,C_2,C_3$ will depend on $\e_1, \dots, \e_{r-1}$.

\begin{prop}\label{prop:theta_mu_involution}
  Let $\theta \in \Theta_1$ be non-trivial, with corresponding $c,A_p$.
  \begin{enumerate}[(1)]
  \item\label{it:theta_mu_involution_single} For any $n \in \ZZp$,
    \begin{equation*}
      (\theta * \mu)(n) = c \prodpn{n}{A_p(0)}
      \prodpddp{\nu}{n}{A_p(\nu)-A_p(\nu-1)}.
    \end{equation*}
  \item\label{it:theta_mu_involution_sum_bound} We assume $\theta \in
    \Theta_2(b,C_1,C_2,C_3)$. For any $t \in \RRnn$,
    \begin{equation*}
      \sum_{0 < n \le t} |(\theta*\mu)(n)|\cdot n \ll_{C_2}
      \tau(b)(C_1C_2)^{\omega(b)}C_3 t(\log(t+2))^{C_2-1},
    \end{equation*}
    where $\tau(n)=\sum_{d|n} 1$ is the divisor function.
  \item\label{it:theta_mu_involution_sum} We assume $\theta \in
    \Theta_2(b,C_1,C_2,C_3)$. For any $q \in \ZZp$, the infinite sum
    and the infinite product
    \begin{equation*}
      \sum_{\substack{n > 0\\\cp n q}} \frac{(\theta*\mu)(n)}{n}, \qquad 
      c\prodpnp{q}{\left(1-\frac 1 p\right) \sum_{\nu=0}^\infty
        \frac{A_p(\nu)}{p^\nu}} \prodpd{q} A_p(0).
    \end{equation*}
    converge to the same real number.
  \end{enumerate}
\end{prop}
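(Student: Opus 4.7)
My plan is to treat the three parts in sequence, since \eqref{it:theta_mu_involution_single} feeds into the bound of \eqref{it:theta_mu_involution_sum_bound}, and the absolute convergence provided by the latter underlies the factorization in \eqref{it:theta_mu_involution_sum}. For \eqref{it:theta_mu_involution_single}, the idea is that $\theta$ is near-multiplicative: putting $c'' = c\prodp{A_p(0)} \in \RRnz$ in the non-degenerate case, the function $g(n) = \theta(n)/c''$ is multiplicative with $g(p^\nu) = A_p(\nu)/A_p(0)$. Then $g*\mu$ is multiplicative, and since $\mu$ vanishes on non-squarefree numbers one has $(g*\mu)(p^\nu) = g(p^\nu) - g(p^{\nu-1}) = (A_p(\nu)-A_p(\nu-1))/A_p(0)$ at every prime power with $\nu \ge 1$. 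Multiplying back by $c''$ and separating the $A_p(0)$ factors at primes $p \nmid n$ yields the claimed formula. Edge cases with vanishing $A_p(0)$ are handled by the infinite-product convention, under which both sides of the identity vanish on the relevant $n$.

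For \eqref{it:theta_mu_involution_sum_bound}, substituting the formula from \eqref{it:theta_mu_involution_single} together with hypothesis \eqref{it:function_small_2} of Definition~\ref{def:function_small} gives
\[
|(\theta*\mu)(n)| \cdot n \le C_3 \prodpddp{\nu}{n}{|A_p(\nu)-A_p(\nu-1)| \cdot p^\nu},
\]
and hypothesis \eqref{it:function_small_1} bounds each local factor by $C_1 p^\nu$ when $p^\nu \div b$ and by $C_2$ otherwise. I would then factor $n = dm$, where $d$ collects the prime powers $p^\nu \ddiv n$ with $p^\nu \div b$, so that $d \div b$ and
\[
|(\theta*\mu)(n)| \cdot n \le C_3 \cdot C_1^{\omega(d)} d \cdot C_2^{\omega(m)}.
\]
Summing over $n \le t$ becomes a double sum over $d \div b$ and $m \le t/d$, after dropping the harmless restriction that primes common to $m$ and $b$ occur in $m$ to a power exceeding $v_p(b)$. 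The inner sum is estimated by Example~\ref{exa:phis_phid_omega} as $O_{C_2}((t/d)(\log(t+2))^{C_2-1})$, and the outer $d$-sum factors as $\prodpdp{b}{1+v_p(b) C_1 C_2} \ll \tau(b)(C_1 C_2)^{\omega(b)}$ (absorbing a $C_2$ for convenience at each prime of $b$), yielding the claimed bound.

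For \eqref{it:theta_mu_involution_sum}, the bound from \eqref{it:theta_mu_involution_sum_bound} together with partial summation against $n^{-2}$ shows that $\sum_n |(\theta*\mu)(n)|/n$ converges, so the infinite sum in question converges absolutely. I would then reorganize it via \eqref{it:theta_mu_involution_single} as an Euler product
\[
\sum_{\substack{n > 0\\\cp n q}} \frac{(\theta*\mu)(n)}{n} = c \prodpd{q}{A_p(0)} \prodpnp{q}{A_p(0) + \sum_{\nu \ge 1}\frac{A_p(\nu)-A_p(\nu-1)}{p^\nu}}.
\]
The local sum telescopes: writing $\sum_{\nu \ge 1} A_p(\nu)/p^\nu = (\sum_{\nu \ge 0} A_p(\nu)/p^\nu) - A_p(0)$ and $\sum_{\nu \ge 1} A_p(\nu-1)/p^\nu = (1/p)\sum_{\nu \ge 0} A_p(\nu)/p^\nu$, the $p$-local factor simplifies to $(1-1/p)\sum_{\nu \ge 0} A_p(\nu)/p^\nu$, producing exactly the claimed infinite product and incidentally establishing its convergence.

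The main technical obstacle will be the constant bookkeeping in \eqref{it:theta_mu_involution_sum_bound}: one has to combine the $C_1$ from case-1 primes, the $C_2$ from case-2 and case-3 primes, the divisor sum over $d \div b$, and the $(\log(t+2))^{C_2-1}$ from Example~\ref{exa:phis_phid_omega} so that the constants coalesce into the claimed prefactor and the logarithmic exponent comes out correctly. The other two parts are conceptually routine once the formula in \eqref{it:theta_mu_involution_single} is in place and absolute convergence has been secured, modulo minor care with the vanishing-factor convention for infinite products.
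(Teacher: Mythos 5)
Your overall route coincides with the paper's for all three parts: part~\eqref{it:theta_mu_involution_single} is the M\"obius--convolution identity for an ``almost multiplicative'' function, part~\eqref{it:theta_mu_involution_sum_bound} combines the resulting pointwise bound with the splitting of $n$ into its $b$-part $d$ and the remainder $m$ together with Example~\ref{exa:phis_phid_omega}, and part~\eqref{it:theta_mu_involution_sum} passes from the absolutely convergent sum to an Euler product and telescopes each local factor. Your constant bookkeeping in \eqref{it:theta_mu_involution_sum_bound} and your telescoping identity in \eqref{it:theta_mu_involution_sum} are correct, and your decomposition $n=dm$ with $d\div b$ is if anything slightly cleaner than the paper's reindexing.

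The one genuine gap is your treatment of the degenerate case in \eqref{it:theta_mu_involution_single}, and it propagates into \eqref{it:theta_mu_involution_sum}. If $A_p(0)=0$ for some prime $p$ --- which is allowed for a non-trivial $\theta\in\Theta_1$ and is the very case the paper's proof of \eqref{it:theta_mu_involution_sum} spends most of its effort on --- then $c''=c\prodp{A_p(0)}=0$ and $g=\theta/c''$ is undefined, so multiplicativity of $g*\mu$ is not available. Your fallback claim that ``both sides of the identity vanish on the relevant $n$'' is only true for $n$ not divisible by such a $p$; for $n$ divisible by every such $p$ the identity is non-trivial. Concretely, take $c=1$, $A_2(0)=0$, $A_2(\nu)=1$ for $\nu\ge1$, and $A_p\equiv1$ for $p\ne2$, so that $\theta$ is the indicator function of the even integers: then $(\theta*\mu)(2)=1\ne0$, and this $n=2$ is covered by neither branch of your argument. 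The paper sidesteps the normalization entirely in \eqref{it:theta_mu_involution_single} by expanding $\prodpddp{\nu}{n}{A_p(\nu)-A_p(\nu-1)}$ directly into $\sum_{d\div n}\mu(d)\theta(n/d)$ as a finite algebraic identity, and in \eqref{it:theta_mu_involution_sum} it introduces $\nu_p=\min\{\nu\where A_p(\nu)\ne0\}$, the integer $a=\prodp{p^{\nu_p}}$ and a genuinely multiplicative function $B$ supported on $n/a$, treating the case $\ncp{a}{q}$ separately. Your argument is repaired by adopting either device (or, for \eqref{it:theta_mu_involution_single}, by noting that for fixed $n$ both sides are the same polynomial in the finitely many values $A_p(\nu)$ with $p\div n$, times the common factor $c\prodpn{n}{A_p(0)}$); as written, however, the step fails precisely where the paper does extra work.
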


\begin{proof}
  Up to the converging product $\prodpn{n}A_p(0)$, claim
  \eqref{it:theta_mu_involution_single} is an identity of finite algebraic
  expressions:
  \begin{equation*}
    \begin{split}
      &c \prodpn{n}{A_p(0)}
      \prodpddp{\nu}{n}{A_p(\nu)-A_p(\nu-1)}\\
      ={}&\sums{d \div n\\|\mu(d)=1|} c \prodpn{n}{A_p(0)}
      \prodpddn{\nu}{n}{d} A_p(\nu) \prodpddd{\nu}{n}{d}(-A_p(\nu-1))\\
      ={}&\sum_{d \div n} \mu(d) c\prodpn{\frac n
        d}{A_p(0)}\prodpdd{\nu}{\frac n d}{A_p(\nu)}\\
      ={}&\sum_{d \div n} \mu(d)\theta(n/d)\\
      {}=& (\theta*\mu)(n).
    \end{split}
  \end{equation*}

  For \eqref{it:theta_mu_involution_sum_bound}, it follows from
  \eqref{it:theta_mu_involution_single} that
  \begin{equation*} |(\theta*\mu)(n)|
    \le C_1^{\omega(\gcd(b,n))}C_2^{\omega(n)}C_3\gcd(b,n)n^{-1}.
  \end{equation*}
  Therefore,
  \begin{equation*}
    \begin{split}
      \sum_{0 < n \le t} |(\theta*\mu)(n)|\cdot n &\ll \sum_{0 < n \le t}
      C_1^{\omega(\gcd(n,b))}C_2^{\omega(n)}C_3\gcd(n,b)\\
      &\ll \sum_{d|b} \sums{0 < n' \le t/d\\\cp {n'}b}
      C_1^{\omega(d)}C_2^{\omega(dn')}C_3d\\
      &\ll_{C_2} \sum_{d\div b}(C_1C_2)^{\omega(d)}C_3t(\log(t+2))^{C_2-1}\\
      &\ll \tau(b)(C_1C_2)^{\omega(b)}C_3t(\log(t+2))^{C_2-1},
    \end{split}
  \end{equation*}
  using Example~\ref{exa:phis_phid_omega}.

  For \eqref{it:theta_mu_involution_sum}, for $p \in \P$, let $\nu_p =
  \min\{\nu \in \ZZnn \where A_p(\nu) \ne 0\}$. Since
  $\theta$ is non-trivial, $\nu_p = 0$ for all but finitely many $p$, so $a =
  \prodp{p^{\nu_p}}$ defines a positive integer. If $a \ndiv n$, then
  $\theta(n) = 0$ and $(\theta*\mu)(n)=0$.

  We define the multiplicative function $B: \ZZp \to \RR$ by
  \begin{equation*}
    B(p^\nu) =
    \frac{A_p(\nu+\nu_p)-A_p(\nu+\nu_p-1)}{A_p(\nu_p)},
  \end{equation*}
  for any $p \in \P$ and $\nu \in \ZZp$, and
  \begin{equation*}
    c' = c  \prodp{A_p(\nu_p)} \in \RR.
  \end{equation*}
  If $n=an'$ for some $n' \in \ZZp$, then, by
  \eqref{it:theta_mu_involution_single},
  \begin{equation*}
       (\theta * \mu)(n) = c \prodpn{an'}{A_p(0)}
      \prodpddp{\nu}{an'}{A_p(\nu)-A_p(\nu-1)}=c' B(n').
  \end{equation*}
  
  We assume that $\cp a q$.  By~\eqref{it:theta_mu_involution_sum_bound} and
  Lemma~\ref{lem:sum_over_n}, the following sum converges absolutely, so that
  we may form the Euler product in the second step.
  \begin{equation*}
    \begin{split}
      &\sums{n=1\\\cp n q}^\infty \frac{(\theta*\mu)(n)}{n} = \sums{n'=1\\\cp
        {n'} q}^\infty \frac{c'B(n')}{an'}= \frac{c'}{a}
      \prodpnp{q}{\sum_{\nu=0}^\infty \frac{B(p^\nu)}{p^\nu}}\\
      ={}& c \prodp{\frac{A_p(\nu_p)}{p^{\nu_p}}} \prodpnp{q}
      {1+\sum_{\nu=1}^\infty \frac{A_p(\nu+\nu_p)-A_p(\nu+\nu_p-1)}{p^\nu
          A_p(\nu_p)}}\\
      ={}& c \prodpd{q}{\frac{A_p(\nu_p)}{p^{\nu_p}}} \prodpnp{q}{\left(1-\frac
          1 p\right) \sum_{\nu=\nu_p}^\infty \frac{A_p(\nu)}{p^\nu}}.
    \end{split}
  \end{equation*}
  Since $A_p(\nu) = 0$ for any $\nu < \nu_p$, and $\nu_p = 0$ for any $p \div
  q$, this proves the claim in the case $\cp a q$.

  If $\ncp a q$, then $(\theta*\mu)(n)=0$ for all $n$ satisfying $\cp n q$, so
  that \eqref{it:theta_mu_involution_sum} is trivially true.
\end{proof}

Because of the following result, $\A(\theta(n),n,q)$ should be
viewed as the average size of $\theta(n)$ when summed over all $n$ in a
residue class modulo $q$ in a sufficiently long interval.

\begin{cor}\label{cor:function_small_has_average}
  Let $\theta \in \Theta_2(b,C_1,C_2,C_3)$ be non-trivial. If $q \in \ZZp$
  and $a \in \ZZ$ with $\cp a q$, then
  \begin{equation*}
    \sum_{\substack{0 < n \le t\\\congr{n}{a}{q}}} \theta(n)=
    \frac{t}{q} \A(\theta(n),n,q)  +
    O_{C_2}\left(\tau(b)(C_1C_2)^{\omega(b)}C_3(\log(t+2))^{C_2}\right).
  \end{equation*}
  for any $t \in \RRnn$. In particular, in the notation of
  Definition~\ref{def:arithmetic_function_with_average}, $\theta \in
  \Theta_0(C_2)$, with $\A(\theta(n),n) = \A(\theta(n),n,1)$ and
  $\E(\theta(n),n) = O_{C_2}(\tau(b)(C_1C_2)^{\omega(b)}C_3)$.
\end{cor}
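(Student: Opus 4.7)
The plan is to deduce the corollary by combining Lemma~\ref{lem:sum_arithmetic} with parts~(\ref{it:theta_mu_involution_sum_bound}) and~(\ref{it:theta_mu_involution_sum}) of Proposition~\ref{prop:theta_mu_involution}, and then specialising to $q=1$ for the final assertion. All the work has essentially been done in those earlier results, so the main task is to splice them together and keep track of the constants.

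Writing $K = \tau(b)(C_1C_2)^{\omega(b)}C_3$, Proposition~\ref{prop:theta_mu_involution}(\ref{it:theta_mu_involution_sum_bound}) provides the bound
\[
  \sum_{0<n\le t} |(\theta*\mu)(n)|\cdot n \ll_{C_2} K\,t(\log(t+2))^{C_2-1}.
\]
This is exactly the hypothesis of Lemma~\ref{lem:sum_arithmetic} with $C=C_2$, except for the extra multiplicative factor $K$. An inspection of the proof of Lemma~\ref{lem:sum_arithmetic} shows that both of its error contributions depend linearly on the constant appearing in the hypothesis: the first is estimated via Lemma~\ref{lem:sum_over_n} with $\kappa=1$, the second with $\kappa>1$, and in each case the factor $K$ can simply be carried through. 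Consequently,
\[
  \sum_{\substack{0<n\le t\\\congr n a q}} \theta(n) = \frac{c_0 t}{q} + O_{C_2}\bigl(K\,(\log(t+2))^{C_2}\bigr),
\]
where $c_0 = \sum_{n>0,\,\cp n q}(\theta*\mu)(n)/n$. By Proposition~\ref{prop:theta_mu_involution}(\ref{it:theta_mu_involution_sum}), this $c_0$ equals the infinite product
\[
  c\prodpnp{q}{\left(1-\frac 1 p\right) \sum_{\nu=0}^\infty \frac{A_p(\nu)}{p^\nu}} \prodpd{q} A_p(0),
\]
which is precisely $\A(\theta(n),n,q)$ by Definition~\ref{def:function_small}. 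Substitution yields the main displayed identity.

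For the ``in particular'' claim, take $q=1$, $a=0$, so the congruence condition becomes trivial; the identity then reads
\[
  \sum_{0<n\le t} \theta(n) = t\cdot\A(\theta(n),n,1) + E(t)(\log(t+2))^{C_2}
\]
for a function $E\colon\RRnn\to\RR$ with $\sup_{t\in\RRnn}|E(t)| \ll_{C_2} K$. This is exactly the defining condition for $\theta\in\Theta_0(C_2)$ in Definition~\ref{def:arithmetic_function_with_average}, with $\A(\theta(n),n)=\A(\theta(n),n,1)$ and $\E(\theta(n),n)=O_{C_2}(K)$. No real obstacle is anticipated; the only step that deserves an explicit word is the linear scaling in $K$ of the error terms of Lemma~\ref{lem:sum_arithmetic}, which follows from its proof without modification.
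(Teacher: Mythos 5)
Your proposal is correct and matches the paper's argument: the paper likewise combines Proposition~\ref{prop:theta_mu_involution}\eqref{it:theta_mu_involution_sum_bound} and \eqref{it:theta_mu_involution_sum} with Lemma~\ref{lem:sum_arithmetic}, merely handling the constant $K=\tau(b)(C_1C_2)^{\omega(b)}C_3$ by applying the lemma to the rescaled function $K^{-1}\theta$ rather than by tracking $K$ through the lemma's proof as you do — an immaterial difference.
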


\begin{proof}
  Let $C_4=\tau(b)(C_1C_2)^{\omega(b)}C_3$. By
  Proposition~\ref{prop:theta_mu_involution}\eqref{it:theta_mu_involution_sum_bound},
  Lemma~\ref{lem:sum_arithmetic} applies to $C_4^{-1}\theta$, with $c_0 =
  C_4^{-1}\A(\theta(n),n,q)$ by
  Proposition~\ref{prop:theta_mu_involution}\eqref{it:theta_mu_involution_sum}.
\end{proof}

\begin{example}\label{exa:f_ab}
  For $a, b\in \ZZp$, we consider $f_{a,b}$ as in
  \cite[(3.2)]{arXiv:0710.1560}. Then $f_{a,b} \in \Theta_1$, corresponding to
  $c,A_p$, where $c=1$ and $A_p(0) = 1$ for any prime $p$, while
  \begin{equation*}
    A_p(\nu) =
    \begin{cases}
      0, &p \div b,\\
      1, &p \ndiv b,\ p \div a,\\
      1-\frac 1 p, &p\ndiv ab.\\
    \end{cases}
  \end{equation*}
  for any $\nu > 0$. Clearly $f_{a,b} \in \Theta_2(\prodpd{b}p,1,1,1)$, and we
  compute
  \[\A(f_{a,b}(n),n,q) = \prodpdnp{b}{q}{1-\frac 1 p} \prodpnp{abq}{1-\frac
    1{p^2}}\] for any $q \in \ZZp$. Since $\tau(\prodpd{b}{p})=2^{\omega(b)}$,
  Corollary~\ref{cor:function_small_has_average} gives another proof of
  \cite[Lemma~1]{arXiv:0710.1560}.
\end{example}

\section{Arithmetic functions in several variables}\label{sec:arithmetic_r}

Here, we are interested in the average size of certain arithmetic functions in
several variables when summing them over some or all of these variables. Our
goal is to characterize functions explicitly that typically appear in proofs
of Manin's conjecture, and to show that they lie in $\Theta_{2,r}(C)$ (see
Definition~\ref{def:arithmetic_function_r_summable}), so that we can apply
Proposition~\ref{prop:several_summations}.

\begin{definition}\label{def:arithmetic_function_r}
  Let $r \in \ZZnn$.  For any $\e_1, \dots, \e_r \in \ZZp$ and any prime
  $p$, we define
  \begin{equation*}
    \kk_p(\e_1, \dots, \e_r) = (k_1, \dots, k_r).
  \end{equation*}
  where $p^{k_i} \ddiv \e_i$ for $i=1, \dots, r$

  Let $\Theta_{3,0} = \RR$. For $r \in \ZZp$, let $\Theta_{3,r}$ be the set of
  all non-negative functions $\theta: \ZZp^r \to \RR$ for which there are
  non-negative functions $\theta_p : \ZZnn^r\to \RR$ for any prime $p$ such
  that \[\theta(\e_1, \dots, \e_r) = \prodp{\theta_p(\kk_p(\e_1, \dots,
    \e_r))}\] for all $\e_1, \dots, \e_r \in \ZZp$. We call the functions
  $\theta_p$ \emph{local factors} of $\theta$.

  For $\kk \in \ZZ^r$, we define
  \begin{equation*}
    \supp(\kk)=\{i \in \{1, \dots, r\}
    \where k_i \ne 0\}, \qquad \Sigma(\kk) = k_1+\dots+k_r.
  \end{equation*}
\end{definition}

\begin{definition}\label{def:arithmetic_function_r_small}
  Let $C \in \RR_{\ge 1}$. Let $\Theta_{4,0}(C)=\RR$. For any $r \in \ZZp$,
  let $\Theta_{4,r}(C)$ be the set of all functions $\theta \in \Theta_{3,r}$
  whose local factors $\theta_p$ fulfill the following conditions for any
  prime $p$.
  \begin{enumerate}[(1)]
  \item For any $\kk, \kk' \in \ZZnn^r$ with $\supp(\kk-\kk')=\{i\}$ and
    $\Sigma(\kk-\kk')=1$ (i.e., $\kk, \kk'$ differ by $1$ at the $i$-th
    coordinate $k_i, k'_i$ and coincide at all other coordinates),
    \begin{equation*}
      |\theta_p(\kk)-\theta_p(\kk')| \le
      \begin{cases}
        C, &k_i=1,\ \#\supp(\kk) \ge 2,\\
        Cp^{-k_i}, &\text{otherwise.}
      \end{cases}
    \end{equation*}
  \item For any $\kk \in \ZZp^r$, 
    \begin{equation*}
      \theta_p(\kk) \le
      \begin{cases}
        1+Cp^{-2}, &\kk = (0, \dots, 0),\\
        1+\#\supp(\kk)\cdot Cp^{-1}, &\text{otherwise}.
      \end{cases}
    \end{equation*}
  \end{enumerate}
\end{definition}

We recall Definition~\ref{def:function_small} of $\Theta_2$.

\begin{lemma}\label{lem:arithmetic_function_r_small_is_small}
  For $r \in \ZZp$, $C \in \RR_{\ge 1}$, let $\theta \in
  \Theta_{4,r}(C)$, with local factors $\theta_p$.
  As a function in $\e_r$,
  \begin{equation*}
    \theta \in \Theta_2\left(\prodpd{\e_1\cdots\e_{r-1}}{p}, C,
    C, (3rC)^{\omega(\e_1\cdots\e_{r-1})}
    \prodpp{1+\frac{C}{p^2}}\right).
  \end{equation*}
  The function $\theta':\ZZp^{r-1} \to \RR$ defined by
  \begin{equation*}
    \theta'(\e_1, \dots, \e_{r-1}) = \A(\theta(\e_1, \dots, \e_r),\e_r,1),
  \end{equation*}
  has local factors
  \begin{equation*}
    \theta'_p(\kk) = \left(1-\frac 1 p\right) \sum_{k_r=0}^\infty
    \frac{\theta_p(\kk,k_r)}{p^{k_r}}.
  \end{equation*}
\end{lemma}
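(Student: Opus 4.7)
The plan is to fix $\e_1, \dots, \e_{r-1}$ throughout and treat $\theta$ as a function of $\e_r$ alone, then verify directly the conditions of Definition~\ref{def:function_small}. Write $\kk_p^{(r-1)}=\kk_p(\e_1,\dots,\e_{r-1})$, and define $A_p(\nu)=\theta_p(\kk_p^{(r-1)},\nu)$ for each prime $p$ and $\nu\in\ZZnn$, with $c=1$. Since $\theta(\e_1,\dots,\e_r)=\prodp \theta_p(\kk_p^{(r-1)},k_{p,r})$ where $p^{k_{p,r}}\ddiv\e_r$, this exhibits $\theta$ (as a function of $\e_r$) as an element of $\Theta_1$ corresponding to $c,A_p$ in the sense of Definition~\ref{def:function_product}.

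Next I verify condition~\eqref{it:function_small_1} of Definition~\ref{def:function_small} with $b=\prodpd{\e_1\cdots\e_{r-1}}{p}$ and $C_1=C_2=C$. Fix $p$ and $\nu\ge 1$, and apply Definition~\ref{def:arithmetic_function_r_small}(1) to $\kk=(\kk_p^{(r-1)},\nu)$ and $\kk'=(\kk_p^{(r-1)},\nu-1)$, which differ only in the last coordinate by $1$. If $p^\nu \div b$ then $p\div\e_1\cdots\e_{r-1}$ and $\nu=1$, so $\kk$ has at least two nonzero entries, putting us in the first case of Definition~\ref{def:arithmetic_function_r_small}(1) and yielding the bound $C=C_1$. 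In all other cases ($\nu\ge 2$, or $p\ndiv\e_1\cdots\e_{r-1}$) the second case applies and gives $Cp^{-\nu}=C_2p^{-\nu}$.

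For condition~\eqref{it:function_small_2}, I need to bound $\prodpn{k}A_p(0)$ uniformly in $k\in\ZZp$. For $p\ndiv\e_1\cdots\e_{r-1}$ the vector $(\kk_p^{(r-1)},0)$ vanishes and Definition~\ref{def:arithmetic_function_r_small}(2) gives $A_p(0)\le 1+Cp^{-2}$; for $p\div\e_1\cdots\e_{r-1}$ the support has size at most $r-1$ (the last coordinate is zero), so $A_p(0)\le 1+rCp^{-1}\le 3rC$ (using $C,r\ge 1$). Multiplying and discarding the restriction $p\ndiv k$ in the unbounded product yields
\[
\prodpn{k} A_p(0)\ \le\ \prodpp{1+\frac{C}{p^2}}\cdot (3rC)^{\omega(\e_1\cdots\e_{r-1})},
\]
which is the claimed $C_3$. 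This establishes the first assertion of the lemma.

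For the second assertion, I apply Proposition~\ref{prop:theta_mu_involution}\eqref{it:theta_mu_involution_sum} (equivalently the defining formula in Definition~\ref{def:function_small}) with $q=1$ to get
\[
\A(\theta(\e_1,\dots,\e_r),\e_r,1)=\prodp\left(1-\frac1p\right)\sum_{\nu=0}^\infty\frac{\theta_p(\kk_p^{(r-1)},\nu)}{p^\nu},
\]
which is immediately of the form $\prodp \theta'_p(\kk_p(\e_1,\dots,\e_{r-1}))$ with $\theta'_p$ as stated. The only mild obstacle is the bookkeeping in condition~\eqref{it:function_small_1} — making sure that the borderline case $\nu=1$ with $p\div\e_1\cdots\e_{r-1}$ is exactly the case where the second alternative of Definition~\ref{def:arithmetic_function_r_small}(1) fails and that this case is captured by the choice $b=\prodpd{\e_1\cdots\e_{r-1}}{p}$ — but this is routine.
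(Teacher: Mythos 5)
Your proposal is correct and follows essentially the same route as the paper: fix $\e_1,\dots,\e_{r-1}$, set $c=1$ and $A_p(\nu)=\theta_p(\kk_p(\e_1,\dots,\e_{r-1}),\nu)$, check the two conditions of Definition~\ref{def:function_small} against Definition~\ref{def:arithmetic_function_r_small}, and read off the local factors of $\theta'$ from the $q=1$ product in Proposition~\ref{prop:theta_mu_involution}\eqref{it:theta_mu_involution_sum}. The only (harmless) deviation is in bounding $A_p(0)$ for $p\div\e_1\cdots\e_{r-1}$: you invoke Definition~\ref{def:arithmetic_function_r_small}(2) directly on the vector $(\kk_p(\e_1,\dots,\e_{r-1}),0)$ to get $1+\#\supp(\kk)\cdot Cp^{-1}\le 3rC$, whereas the paper reaches the same constant $3rC$ by telescoping from $\theta_p((0,\dots,0))$ via the difference bounds of condition~(1).
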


\begin{proof}
  We have 
  \begin{equation*}
    \theta(\e_1, \dots, \e_r) = 
    \prodpdd{k_r}{\e_r}{\theta_p(\kk_p(\e_1,\dots,\e_{r-1}),k_r)}
    \prodpn{\e_r}{\theta_p(\kk_p(\e_1, \dots, \e_{r-1}),0)}.
  \end{equation*}
  Therefore, $\theta$ as a function in $\e_r$ lies in $\Theta_1$, with
  corresponding $c=1$ and $A_p(\nu) = \theta_p(\kk_p(\e_1, \dots,
    \e_{r-1}),\nu)$ for any $\nu \in \ZZnn$ and $p \in \P$.

  Now we check that $c,A_p$ fulfill the conditions of
  Definition~\ref{def:function_small}.
  For any $\kk \in \ZZnn^r$, $\theta_p(\kk)$ is at most
  \begin{equation*}
    \begin{split}
      &\theta_p((0, \dots, 0))\\ &+ \sum_{i=1}^r \sum_{n=1}^{k_i}
      |\theta_p(k_1, \dots, k_{i-1},n,0, \dots,
      0)-\theta_p(k_1, \dots, k_{i-1},n-1,0, \dots, 0)|\\
      &\le (1+Cp^{-2})+\sum_{i=1}^r\left(C+\sum_{n=2}^{k_i}Cp^{-n}\right)\\
      &\le 1+Cp^{-2}+r\left(C+\frac{C}{p^2(1-p^{-1})}\right)\\
      &\le 3rC.
    \end{split}
  \end{equation*}
  Therefore,
  \begin{equation*}
    |A_p(0)| \le
    \begin{cases}
      3rC, &p \div \e_1\cdots\e_{r-1},\\
      1+Cp^{-2}, &p \ndiv \e_1\cdots\e_{r-1},
    \end{cases}
  \end{equation*}
  so that, for any $k \in \ZZp$,
  \begin{equation*}
    \Big|c \prodpn{k}{A_p(0)} \Big| \le (3rC)^{\omega(\e_1\cdots\e_{r-1})}
    \prodpp{1+\frac{C}{p^2}}.
  \end{equation*}
  Furthermore, for any prime $p$ and $\nu \ge \ZZp$,
  \begin{multline*}
    |A_p(\nu)-A_p(\nu-1)| = |\theta_p(\kk_p(\e_1, \dots,
    \e_{r-1}),\nu)-\theta_p(\kk_p(\e_1, \dots, \e_{r-1}),\nu-1)|\\ \le
    \begin{cases}
      C, &\nu=1,\ \#\supp(\kk_p(\e_1, \dots, \e_{r-1})) > 0,\\
      Cp^{-\nu}, &\text{otherwise,}
    \end{cases}
  \end{multline*}
  where the first case applies if and only if $p^\nu \div
  \prodpd{\e_1\cdots\e_{r-1}}{p}$.

  Therefore, we may define $\theta'$ as in the statement of the
  lemma. By definition,
  \begin{equation*}
    \theta'(\e_1, \dots, \e_{r-1})
    =\prodpp{\left(1-\frac 1 p\right) \sum_{k_r =0}^\infty \frac{
        \theta_p(\kk_p(\e_1,\dots,\e_{r-1}),k_r)}{p^{k_r}}}
  \end{equation*}
  for any $\e_1,\dots, \e_{r-1}$. Here, we can read off local factors for
  $\theta'$ as claimed.
\end{proof}

\begin{lemma}\label{lem:arithmetic_function_r_small_induction}
  Let $r,C,\theta,\theta'$ be as in
  Lemma~\ref{lem:arithmetic_function_r_small_is_small}.
  Then $\theta' \in \Theta_{4,r-1}(3C)$.
\end{lemma}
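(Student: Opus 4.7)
The plan is to verify the two conditions of Definition~\ref{def:arithmetic_function_r_small} for $\theta'$ with constant $3C$, using the explicit formula
\begin{equation*}
\theta'_p(\kk) = \left(1-\frac{1}{p}\right)\sum_{k_r=0}^\infty \frac{\theta_p(\kk, k_r)}{p^{k_r}}
\end{equation*}
provided by Lemma~\ref{lem:arithmetic_function_r_small_is_small}. Both conditions will follow by a case analysis on $\kk$, applying the corresponding conditions for $\theta$ termwise, and summing the resulting geometric series.

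For condition~(1), given $\kk, \kk' \in \ZZnn^{r-1}$ differing at position $i$ with $k_i = k'_i + 1$, I will expand $\theta'_p(\kk) - \theta'_p(\kk')$ as a series in $k_r$ and apply condition~(1) for $\theta$ to each term, noting that $(\kk, k_r)$ and $(\kk', k_r)$ still differ at position $i$ by $1$ in $\ZZnn^r$. The cases $k_i \ge 2$, and $k_i = 1$ with $\#\supp(\kk) \ge 2$, yield a bound uniform in $k_r$ which sums geometrically to $Cp^{-k_i}$ or $C$ respectively. The remaining case $k_i = 1$ with $\#\supp(\kk) = 1$ requires splitting off the term $k_r = 0$ (where $\#\supp(\kk, 0) = 1$ forces the bound $Cp^{-1}$) from the terms $k_r \ge 1$ (where $\#\supp(\kk, k_r) = 2$ gives the bound $C$); the sum evaluates to $(2 - 1/p)Cp^{-1}$, which is comfortably below $3Cp^{-1}$.

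For condition~(2), I will treat $\kk = (0,\dots,0)$ and $\kk \neq (0,\dots,0)$ separately. In the zero case, the $k_r = 0$ term contributes at most $1 + Cp^{-2}$ and each $k_r \ge 1$ term at most $1 + Cp^{-1}$; the series evaluates to $1 + (2 - 1/p)Cp^{-2}$, which is bounded by $1 + 3Cp^{-2}$. In the nonzero case, writing $s := \#\supp(\kk) \ge 1$, the term $k_r = 0$ is controlled by $1 + sCp^{-1}$ (since $\#\supp(\kk, 0) = s$) and each term $k_r \ge 1$ by $1 + (s+1)Cp^{-1}$; summing yields $1 + sCp^{-1} + Cp^{-2}$, which is at most $1 + 3sCp^{-1}$ because $s \ge 1$ and $p \ge 2$.

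No real obstacle is expected: the entire argument reduces to geometric series manipulations. The inflation of the constant from $C$ to $3C$ is precisely calibrated to absorb the losses at the boundary $k_r = 0$, where $\#\supp(\kk, k_r)$ changes between $k_r = 0$ and $k_r \ge 1$ and a different case of the conditions for $\theta$ applies.
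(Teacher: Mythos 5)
Your proposal is correct and follows essentially the same route as the paper's proof: both start from the explicit local-factor formula $\theta'_p(\kk)=\bigl(1-\tfrac1p\bigr)\sum_{k_r\ge 0}\theta_p(\kk,k_r)p^{-k_r}$ supplied by Lemma~\ref{lem:arithmetic_function_r_small_is_small}, apply the hypotheses on $\theta_p$ termwise with the same case split (isolating $k_r=0$, where $\#\supp(\kk,k_r)$ drops), and sum geometric series to land within the inflated constant $3C$. The only differences are cosmetic bookkeeping (e.g.\ the paper bounds $\#\supp(\kk)+1\le 2\#\supp(\kk)$ where you carry the extra $Cp^{-2}$ explicitly), and your final bounds are at least as sharp.
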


\begin{proof}
  By Lemma~\ref{lem:arithmetic_function_r_small_is_small}, local factors
  of $\theta'$ are
  \begin{equation*}
    \theta'_p(\kk) = \left(1-\frac 1 p\right) \sum_{k_r=0}^\infty
    \frac{\theta_p(\kk,k_r)}{p^{k_r}}.
  \end{equation*}
  For $k_r \in \ZZp$, we have 
  \begin{equation*}
    |\theta_p(0,\dots,0,k_r)-\theta_p(0, \dots, 0,0)| \le
      \sum_{n=1}^{k_r} \frac{C}{p^n} \le \frac{2C}{p}.
  \end{equation*}
  Therefore,
  \begin{multline*}
    |\theta'_p(0, \dots, 0)-\theta_p(0, \dots, 0,0)|\\ \le
    \left(1-\frac 1 p\right) \sum_{k_r=1}^\infty \frac{|\theta_p(0, \dots,
        0,k_r) - \theta_p(0, \dots, 0,0)|}{p^{k_r}} \le
    \frac{2C}{p^2}.
  \end{multline*}
  By the assumption on $\theta_p(0, \dots, 0)$, this implies $\theta'_p(0,
  \dots, 0) \le 1+3Cp^{-2}$.
  
  For $\kk \in \ZZnn^{r-1} \setminus \{(0, \dots, 0)\}$, so that
  $\#\supp(\kk)+1 \le 2\#\supp(\kk)$, we have
  \begin{equation*}
    \theta'_p(\kk) \le \left(1-\frac 1 p\right)
    \sum_{k_r=0}^\infty \frac{1+(1+\#\supp(\kk))Cp^{-1}}{p^{k_r}} \le
    1+\frac{\#\supp(\kk)\cdot 2C}{p}.
  \end{equation*}

  Now we consider $\kk, \kk'\in \ZZnn^{r-1}$ with $\supp(\kk-\kk')=\{i\}$ and
  $\Sigma(\kk-\kk')=1$, so that we have $k_i=k'_i+1$ for the $i$-th
  coordinates $k_i,k'_i$ of $\kk,\kk'$. We have
  \begin{equation*}
    |\theta_p'(\kk)-\theta_p'(\kk')| \le \left(1-\frac 1
      p\right) \sum_{k_r=0}^\infty \frac{|\theta_p(\kk,k_r) -
      \theta_p(\kk',k_r)|}{p^{k_r}}.
  \end{equation*}
  If $k_i \ge 2$, then
  \begin{equation*}
    |\theta_p'(\kk)-\theta_p'(\kk')| \le \frac{C}{p^{k_i}}.
  \end{equation*}
  If $k_i = 1$ and $\#\supp(\kk)=1$, then
  \begin{equation*}
    |\theta_p'(\kk)-\theta_p'(\kk')| \le \left(1-\frac 1
      p\right) \left(\frac{C}{p}+\sum_{k_r=1}^\infty
      \frac{C}{p^{k_r}}\right) \le \frac{2C}{p}.
  \end{equation*}
  If $k_i=1$ and $\#\supp(\kk)\ge 2$, then
  \begin{equation*}
    |\theta_p'(\kk)-\theta_p'(\kk')| \le C.
  \end{equation*}
  This completes the proof.
\end{proof}

Recall Definition~\ref{def:function_bound} of $\Theta_{0,r}(C)$,
Definition~\ref{def:arithmetic_function_r_with_average} of
$\Theta_{1,r}(C,\e_r)$ and Definition~\ref{def:arithmetic_function_r_summable}
of $\Theta_{2,r}(C)$.

\begin{cor}\label{cor:arithmetic_function_r_small_has_average}
  For any $r \in \ZZnn$, $C \in \ZZnn$, we have
  \begin{equation*}
    \Theta_{4,r}(C) \subset
    \Theta_{0,r}(0) \cap \Theta_{1,r}(12rC^2,\e_r) \cap \Theta_{2,r}(12r(3^rC)^2).
\end{equation*}
\end{cor}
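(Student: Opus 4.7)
The plan is to proceed by induction on $r$. The base case $r=0$ is trivial since $\Theta_{4,0}(C) = \Theta_{0,0}(0) = \Theta_{2,0}(\cdot) = \RR$ (and the $\Theta_{1,0}$-factor is vacuous). For the inductive step, fix $\theta \in \Theta_{4,r}(C)$ with $r \ge 1$, and let
\[\theta'(\e_1,\dots,\e_{r-1}) = \A(\theta(\e_1,\dots,\e_r),\e_r,1).\]
The strategy is to combine Lemma~\ref{lem:arithmetic_function_r_small_is_small}, Corollary~\ref{cor:function_small_has_average}, and Lemma~\ref{lem:arithmetic_function_r_small_induction} to transfer information from $\theta$ to $\theta'$, and then invoke the inductive hypothesis on $\theta' \in \Theta_{4,r-1}(3C)$ (provided by Lemma~\ref{lem:arithmetic_function_r_small_induction}). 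Concretely, Lemma~\ref{lem:arithmetic_function_r_small_is_small} shows that $\theta$, viewed as a function of $\e_r$ with the other variables fixed, lies in $\Theta_2(b,C,C,C_3)$ with the explicit $b,C_3$ given there, and Corollary~\ref{cor:function_small_has_average} then yields that $\theta$ as a function of $\e_r$ belongs to $\Theta_0(C)$, with $\A(\theta,\e_r) = \theta'$ and with a pointwise bound on $\E(\theta,\e_r)$ controlled by a function of $(\e_1,\dots,\e_{r-1})$ of the form $O_C((6rC^3)^{\omega(\e_1\cdots\e_{r-1})})$.

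With these ingredients in hand, I verify the three inclusions in turn. For $\theta \in \Theta_{0,r}(0)$, the same reasoning applied with any variable $\e_i$ in place of $\e_r$ produces an estimate of the shape $\sum_{0 < \e_i \le t}\theta = \A(\theta,\e_i,1)\cdot t + O((6rC^3)^{\omega}(\log(t+2))^C)$; the main term is linear with coefficient belonging to $\Theta_{0,r-1}(0)$ by the induction applied to the $\e_i$-analogue of $\theta'$, and the error term is absorbed (in the linear regime) so that the total sum admits a linear majorant $\theta_i\cdot t$ with $\theta_i \in \Theta_{0,r-1}(0)$. For $\theta \in \Theta_{1,r}(12rC^2,\e_r)$, I verify the four clauses of Definition~\ref{def:arithmetic_function_r_with_average}: clause~(1) is the previous inclusion; clause~(2) follows from Corollary~\ref{cor:function_small_has_average} together with the containment $\Theta_0(C) \subset \Theta_0(12rC^2)$; clause~(3) is $\theta' \in \Theta_{0,r-1}(0)$ from the induction; and clause~(4) demands $\E(\theta,\e_r) \in \Theta_{0,r-1}(12rC^2)$, which follows from the pointwise bound on $\E$ by estimating sums of $M^{\omega(n)}$ via Example~\ref{exa:phis_phid_omega}. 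Finally, $\theta \in \Theta_{2,r}(12r(3^rC)^2)$ reduces (via Definition~\ref{def:arithmetic_function_r_summable}) to $\theta \in \Theta_{1,r}(12r(3^rC)^2,\e_r)$, which is immediate from the previous inclusion and the relaxed constant, together with $\theta' \in \Theta_{2,r-1}(12r(3^rC)^2)$, which holds by induction since $\theta' \in \Theta_{2,r-1}(12(r-1)(3^{r-1}\cdot 3C)^2) = \Theta_{2,r-1}(12(r-1)(3^rC)^2)$ and $12(r-1) \le 12r$.

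The main obstacle is the constant bookkeeping in clause~(4): the pointwise bound on $\E(\theta,\e_r)$ must be pushed through the summation over one of the remaining variables without inflating the stated log power $12rC^2$. This forces one to use the multiplicativity of $\theta$ and the refined structure supplied by Lemma~\ref{lem:arithmetic_function_r_small_is_small} rather than crude absolute estimates for $\theta_p$; once this is handled, the rest of the proof is a disciplined pattern-following of the inductive scheme set up by Lemma~\ref{lem:arithmetic_function_r_small_induction}, with the three constants $0$, $12rC^2$, and $12r(3^rC)^2$ arising from tracking the three inclusions separately through the induction.
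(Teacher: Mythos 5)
Your overall inductive scheme — reduce to $\theta'=\A(\theta,\e_r)\in\Theta_{4,r-1}(3C)$ via Lemmas~\ref{lem:arithmetic_function_r_small_is_small} and \ref{lem:arithmetic_function_r_small_induction}, use Corollary~\ref{cor:function_small_has_average} for clauses (2) and (4) of Definition~\ref{def:arithmetic_function_r_with_average}, and unwind Definition~\ref{def:arithmetic_function_r_summable} with the constant $12r(3^rC)^2$ chosen so that the induction closes — is exactly the paper's proof, and the $\Theta_{1,r}$ and $\Theta_{2,r}$ parts of your argument are fine.

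The gap is in your argument for $\theta\in\Theta_{0,r}(0)$. You derive the one-variable sum from the average-order estimate, $\sum_{0<\e_i\le t}\theta = \A(\theta,\e_i)\,t+O\bigl(E\,(\log(t+2))^C\bigr)$, and absorb the error into a linear majorant $\theta_i\cdot t$. But Definition~\ref{def:function_bound} requires the majorant $\theta_i$ itself to lie in $\Theta_{0,r-1}(0)$, i.e.\ to remain summable with \emph{no} log losses in all remaining variables. Your $\theta_i$ contains the term $E\ll_C(6rC^3)^{\omega(\e_1\cdots\widehat{\e_i}\cdots\e_r)}$, and by Example~\ref{exa:phis_phid_omega} a function $M^{\omega(n)}$ with $M>1$ satisfies $\sum_{n\le t}M^{\omega(n)}\asymp t(\log t)^{M-1}$; so this term lies only in $\Theta_{0,r-1}(6rC^3-1)$, not in $\Theta_{0,r-1}(0)$, and the recursion underlying $\Theta_{0,r}(0)$ breaks for $r\ge 2$. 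This exponent matters downstream (it controls the power of $\log B$ in Lemma~\ref{lem:summation_V_theta_bounds} and Proposition~\ref{prop:complete_summation_1}), so it cannot be waved away. The paper avoids the issue entirely with a \emph{pointwise} bound: condition (2) of Definition~\ref{def:arithmetic_function_r_small} gives $\theta_p(\kk)\le\prod_{i\in\supp(\kk)}(1+C/p)$ away from $\kk=0$ and $\theta_p(0)\le 1+Cp^{-2}$, whence $\theta(\e_1,\dots,\e_r)\le\prod_{i=1}^r(\phid(\e_i))^C\cdot\prod_p(1+Cp^{-2})$, and the right-hand side is in $\Theta_{0,r}(0)$ by Example~\ref{exa:phis_phid_omega}. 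You should replace your average-order derivation of the first inclusion by this pointwise estimate; the rest of your proof then stands.
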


\begin{proof}
  We prove the results by induction on $r$. The case $r=0$ is trivial. Let $r
  \in \ZZp$ and $\theta \in \Theta_{4,r}(C)$. 

  Since
  \begin{equation*}
    \theta(\e_1, \dots, \e_r) \le \prod_{i=1}^r (\phid(\e_i))^C
    \prodpp{1+\frac{C}{p^{2}}},
  \end{equation*}
  for any $\e_1, \dots, \e_r \in \ZZp$, we have $\theta \in \Theta_{0,r}(0)$
  (cf. Example~\ref{exa:phis_phid_omega}).

  By Lemma~\ref{lem:arithmetic_function_r_small_is_small} and
  Corollary~\ref{cor:function_small_has_average}, $\theta \in \Theta_0(C)$ as
  a function in $\e_r$. We define
  \begin{equation*}
    \begin{split}
      \theta'(\e_1, \dots, \e_{r-1}) &= \A(\theta(\e_1, \dots, \e_r),\e_r),\\
      \theta''(\e_1, \dots, \e_{r-1}) &= \E(\theta(\e_1, \dots, \e_r),\e_r).
    \end{split}
  \end{equation*}
  By Lemma~\ref{lem:arithmetic_function_r_small_induction}, we have $\theta'
  \in \Theta_{4,r-1}(3C)$. By induction, $\theta' \in \Theta_{0,r-1}(0)$. By
  Corollary~\ref{cor:function_small_has_average}, \[\theta''(\e_1, \dots,
  \e_{r-1}) = O_C((12rC^2)^{\omega(\e_1\cdots\e_{r-1})})\] since
  $\tau(\prodpd{n}{p}) = 2^{\omega(n)}$ for any $n \in \ZZp$. By
  Example~\ref{exa:phis_phid_omega}, $\theta'' \in \Theta_{0,r-1}(12rC^2)$.
  Therefore, $\theta \in \Theta_{1,r}(12rC^2, \e_r)$.

  Since $\theta' \in \Theta_{2,r-1}(12(r-1)(3^{r-1}(3C))^2)$ by induction, this
  implies $\theta \in \Theta_{2,r}(12r(3^rC)^2)$.
\end{proof}

\begin{lemma}\label{lem:arithmetic_function_r_average}
  Let $r \in \ZZp$ and $\theta_r \in \Theta_{4,r}(C)$, with local factors
  $\theta_{r,p}$. Let $\ell \in \{0, \dots, r-1\}$.  Local factors of
  $\theta_\ell = \A(\theta_r(\e_1, \dots, \e_r), \e_r, \dots, \e_{\ell+1})$
  are given by
  \begin{equation*}
    \theta_{\ell,p}(\kk) = \left(1-\frac 1 p\right)^{r-\ell} \sum_{\kk' \in
      \ZZnn^{r-\ell}} \frac{\theta_{r,p}(\kk,\kk')}{p^{\Sigma(\kk')}}.
  \end{equation*}
  In particular, for $\theta_0 = \A(\theta_r(\e_1, \dots, \e_r),\e_r, \dots,
  \e_1) \in \RR$, we have
  \begin{equation*}
    \theta_{0} = \prodpp{\left(1-\frac 1 p\right)^r \sum_{\kk \in \ZZnn^r}
      \frac{\theta_{r,p}(\kk)}{p^{\Sigma(\kk)}}}.
  \end{equation*}
\end{lemma}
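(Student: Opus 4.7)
The plan is to prove this by induction on $r-\ell$, the number of variables being averaged out. The base case $r-\ell=1$ (i.e., $\ell=r-1$) is exactly the content of Lemma~\ref{lem:arithmetic_function_r_small_is_small}, which tells us that the local factors of $\theta_{r-1}=\A(\theta_r(\e_1,\dots,\e_r),\e_r)$ are
\begin{equation*}
  \theta_{r-1,p}(\kk) = \left(1-\frac{1}{p}\right) \sum_{k_r=0}^\infty \frac{\theta_{r,p}(\kk,k_r)}{p^{k_r}}.
\end{equation*}

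For the inductive step, assuming the formula holds after averaging out $r-1-\ell$ variables from any function in $\Theta_{4,r-1}(C')$, I would first invoke Lemma~\ref{lem:arithmetic_function_r_small_induction} to conclude that $\theta_{r-1} \in \Theta_{4,r-1}(3C)$, so that the induction hypothesis applies to it. Writing $\theta_\ell = \A(\theta_{r-1}(\e_1,\dots,\e_{r-1}),\e_{r-1},\dots,\e_{\ell+1})$ and applying the inductive hypothesis with $\kk''$ running over $\ZZnn^{r-1-\ell}$, the local factors become
\begin{equation*}
  \theta_{\ell,p}(\kk) = \left(1-\frac{1}{p}\right)^{r-1-\ell} \sum_{\kk'' \in \ZZnn^{r-1-\ell}} \frac{\theta_{r-1,p}(\kk,\kk'')}{p^{\Sigma(\kk'')}}.
\end{equation*}
Substituting the base-case formula for $\theta_{r-1,p}(\kk,\kk'')$ and combining the two sums via $\kk' = (\kk'',k_r) \in \ZZnn^{r-\ell}$ with $\Sigma(\kk') = \Sigma(\kk'') + k_r$ gives exactly the claimed expression.

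The only subtlety to verify is that all the intermediate rearrangements make sense, i.e., the double sums converge absolutely so that they can be recombined. This follows from the uniform bounds of Definition~\ref{def:arithmetic_function_r_small}: since $\theta_{r,p}(\kk,\kk') \le 1 + r\cdot Cp^{-1}$ uniformly in $\kk'$, and the geometric weights $p^{-\Sigma(\kk')}$ decay rapidly, the inner sum $\sum_{\kk' \in \ZZnn^{r-\ell}} \theta_{r,p}(\kk,\kk')/p^{\Sigma(\kk')}$ is bounded by $(1-1/p)^{-(r-\ell)}$ times a convergent factor. This matches precisely the bounds used in the proof of Lemma~\ref{lem:arithmetic_function_r_small_is_small} (which itself leans on $\theta \in \Theta_2$ via Proposition~\ref{prop:theta_mu_involution}), so no new estimates are needed beyond routine Fubini-style reindexing. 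The specialization to $\ell=0$ follows by setting $\kk$ to the empty tuple, yielding the stated product formula for the scalar $\theta_0$; no obstacle is really expected here beyond careful bookkeeping of the exponents of $(1-1/p)$.
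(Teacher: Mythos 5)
Your proposal is correct and follows essentially the same route as the paper: an induction on the number of variables averaged out, using Lemma~\ref{lem:arithmetic_function_r_small_is_small} for the single-variable step, Lemma~\ref{lem:arithmetic_function_r_small_induction} to remain in the class $\Theta_{4,\cdot}$ (with constant degraded to $3^{r-\ell}C$), and a recombination of the geometric sums via $\Sigma(\kk')=\Sigma(\kk'')+k_r$. The only cosmetic difference is that you peel off the first averaged variable $\e_r$ while the paper peels off the last one $\e_\ell$; this is immaterial given the recursive definition of $\A(\cdot,\e_{i_1},\dots,\e_{i_n})$.
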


\begin{proof}
  We prove the claim by induction on $\ell$. Local factors of $\theta_{r-1}$
  are given by Lemma~\ref{lem:arithmetic_function_r_small_is_small}. By an
  application of Lemma~\ref{lem:arithmetic_function_r_small_is_small} to
  $\theta_\ell \in \Theta_{4,\ell}(3^{r-\ell}C)$
  (Lemma~\ref{lem:arithmetic_function_r_small_induction}) and the induction
  hypothesis, local factors of $\theta_{\ell-1}$ are
  \begin{equation*}
    \begin{split}
      \theta_{\ell-1,p}(\kk) &= \left(1-\frac 1 p\right)
      \sum_{k_\ell=0}^\infty
      \frac{\theta_{\ell,p}(\kk,k_\ell)}{p^{k_\ell}}\\
      &=\left(1-\frac 1 p\right)^{r-(\ell-1)} \sum_{k_\ell =0}^\infty \frac
      1{p^{k_\ell}} \sum_{\kk' \in \ZZnn^{r-\ell}}
      \frac{\theta_{r,p}(\kk,k_\ell,\kk')}{p^{\Sigma(\kk')}}\\
      &=\left(1-\frac 1 p\right)^{r-(\ell-1)} \sum_{\kk'' \in
          \ZZnn^{r-(\ell-1)}}
        \frac{\theta_{r,p}(\kk,\kk'')}{p^{\Sigma(\kk'')}}
    \end{split}
  \end{equation*}
  This completes the induction step.
\end{proof}

In many applications, we are concerned with a function $\theta \in
\Theta_{3,r}$ whose local factors $\theta_p(\kk)$ only depend on
$\supp(\kk)$. In this case, the notation and results can be simplified as
follows.

\begin{definition}\label{def:arithmetic_function_r_simple}
  Let $\Theta_{3,0}' = \RR$. For $r \in \ZZp$, let $\Theta_{3,r}'$ be the set
  of all $\theta \in \Theta_{3,r}$, with local factors $\theta_p$, such that,
  for any $\kk, \kk' \in \ZZnn^r$ with $\supp(\kk) = \supp(\kk')$, we have
  $\theta_p(\kk) = \theta_p(\kk)$.

  Let $\theta \in \Theta_{3,r}'$ with local factors $\theta_p$. For any $I
  \subset \{1, \dots, r\}$, we define $\theta_p(I)$ as $\theta_p(\kk_I)$ for
  any $\kk_I \in \ZZp^r$ with $\supp(\kk_I) = I$.

  For any $\e_1, \dots, \e_\ell \in \ZZ$, let
  \begin{equation*}
    I_p(\e_1, \dots, \e_r) = \supp(\kk_p(\e_1, \dots, \e_r)) = \{i \in
    \{1, \dots, r\} \where p \div \e_i\},
  \end{equation*}
  so that
  \begin{equation*}
    \theta(\e_1, \dots, \e_r) = \prod_p \theta_p(I_p(\e_1, \dots, \e_r)).
  \end{equation*}
\end{definition}

\begin{definition}\label{def:arithmetic_function_r_simple_small}
  Let $r \in \ZZp$ and $C \in \RR_{\ge 1}$. Let $\Theta_{4,r}'(C)$ be the set
  of all $\theta \in \Theta_{2,r}'$ such that, for any $I \subset \{1, \dots,
  r\}$ and $p \in \P$,
  \begin{equation*}
    |\theta_p(I) - 1| \le
    \begin{cases}
      Cp^{-2}, &\#I=0,\\
      Cp^{-1}, &\#I=1,\\
      C, &\#I\ge 2
    \end{cases}
  \end{equation*}
  and $\theta_p(I) \le 1+\#I\cdot Cp^{-1}$ if $\#I > 0$.
\end{definition}

\begin{cor}\label{cor:arithmetic_function_r_simple_small_is_small}
  For any $r \in \ZZp$ and $C \in \RR_{\ge 1}$, we have
  \begin{equation*}
    \Theta_{4,r}'(C) \subset \Theta_{4,r}(2C) \subset \Theta_{0,r}(0) \cap
    \Theta_{1,r}(48rC^2,\e_r) \cap \Theta_{2,r}(48r(3^rC)^2).
\end{equation*}
\end{cor}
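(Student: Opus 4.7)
The strategy is to reduce everything to the already-proved Corollary~\ref{cor:arithmetic_function_r_small_has_average} by establishing the single inclusion
\[
\Theta_{4,r}'(C) \subset \Theta_{4,r}(2C).
\]
Once this is done, Corollary~\ref{cor:arithmetic_function_r_small_has_average} applied with $2C$ in place of $C$ gives
\[
\Theta_{4,r}(2C) \subset \Theta_{0,r}(0) \cap \Theta_{1,r}(12r(2C)^2,\e_r) \cap \Theta_{2,r}(12r(3^r\cdot 2C)^2),
\]
and the constants $12r(2C)^2 = 48rC^2$ and $12r(3^r\cdot 2C)^2 = 48r(3^rC)^2$ match those in the statement exactly, so the chain of inclusions follows.

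For the inclusion itself, let $\theta \in \Theta_{4,r}'(C)$ with local factors $\theta_p$, and verify the two conditions of Definition~\ref{def:arithmetic_function_r_small} for the constant $2C$. Since membership in $\Theta_{3,r}'$ guarantees that $\theta_p(\kk)$ depends only on $\supp(\kk)$, I first observe that when $\kk,\kk' \in \ZZnn^r$ differ by $1$ at coordinate $i$ with $k_i \ge 2$, we have $\supp(\kk) = \supp(\kk')$, so the difference is zero and the required bound $2Cp^{-k_i}$ holds trivially. The only nontrivial case for condition~(1) is $k_i = 1$, where $\supp(\kk) = \supp(\kk') \sqcup \{i\}$. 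Writing $I = \supp(\kk)$ and $I' = \supp(\kk')$, the triangle inequality applied to the Definition~\ref{def:arithmetic_function_r_simple_small} bounds $|\theta_p(I) - 1|$ and $|\theta_p(I') - 1|$ yields the two sub-cases: if $\#\supp(\kk) \ge 2$ then both $\#I, \#I' \ge 1$, so both quantities are $\le C$ and hence $|\theta_p(\kk) - \theta_p(\kk')| \le 2C$; if $\#\supp(\kk) = 1$ then $\#I = 1, \#I' = 0$, giving $|\theta_p(\kk) - \theta_p(\kk')| \le Cp^{-1} + Cp^{-2} \le 2Cp^{-1} = 2Cp^{-k_i}$.

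Condition~(2) of Definition~\ref{def:arithmetic_function_r_small} is immediate from the corresponding conditions of Definition~\ref{def:arithmetic_function_r_simple_small}: the bound $|\theta_p(\emptyset) - 1| \le Cp^{-2}$ gives $\theta_p((0,\dots,0)) \le 1 + Cp^{-2} \le 1 + 2Cp^{-2}$, and for nonzero $\kk$ the bound $\theta_p(\supp(\kk)) \le 1 + \#\supp(\kk)\cdot Cp^{-1} \le 1 + \#\supp(\kk)\cdot 2Cp^{-1}$ is built into the definition of $\Theta_{4,r}'(C)$.

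There is no serious obstacle in this argument; it is essentially bookkeeping that compares the two sets of pointwise bounds. The one point that requires a tiny bit of care is checking that the two constants $48rC^2$ and $48r(3^rC)^2$ in the conclusion genuinely arise from the constants $12r(\cdot)^2$ and $12r(3^r\cdot)^2$ of Corollary~\ref{cor:arithmetic_function_r_small_has_average} after multiplying the input $C$ by $2$, which they do because $12r \cdot 4 = 48r$ in both places.
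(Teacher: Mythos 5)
Your proposal is correct and follows essentially the same route as the paper: you establish $\Theta_{4,r}'(C)\subset\Theta_{4,r}(2C)$ by checking the two conditions of Definition~\ref{def:arithmetic_function_r_small} case by case (difference vanishes for $k_i\ge 2$ since supports agree; triangle inequality for $k_i=1$), and then invoke Corollary~\ref{cor:arithmetic_function_r_small_has_average} with $2C$ in place of $C$, which yields exactly the constants $48rC^2$ and $48r(3^rC)^2$.
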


\begin{proof}
  Let $\theta \in \Theta_{4,r}'(C)$. Let $\kk, \kk' \in \ZZnn^r$ with
  $\supp(\kk-\kk') = \{i\}$ and $\Sigma(\kk-\kk') = 1$. If $k_i \ge 2$, then
  $\supp(\kk) = \supp(\kk')$, so that
  $\theta_p(\kk)=\theta_p(\kk')$. If $k_i=1$, then $\#\supp(\kk) =
  \#\supp(\kk')+1$, so that
  \begin{multline*}
    |\theta_p(\kk)-\theta_p(\kk')| =
    |\theta_p(\supp(\kk))-\theta_p(\supp(\kk'))| \\
    \le
    \begin{cases}
      2C, &\#\supp(\kk) \ge 2,\\
      2Cp^{-1}, &\#\supp(\kk) =1.
    \end{cases}
  \end{multline*}
  Furthermore, for any $\kk \in \ZZnn^r$,
  \begin{equation*}
    \theta_p(\kk) = \theta_p(\supp(\kk)) \le
    \begin{cases}
      1+Cp^{-2}, &\kk = (0, \dots, 0),\\
       1+\#\supp(\kk) \cdot C
      p^{-1}, &\text{otherwise.}
    \end{cases}
  \end{equation*}
  This shows that $\theta \in \Theta_{4,r}(2C)$, and the result follows from
  Corollary~\ref{cor:arithmetic_function_r_small_has_average}.
\end{proof}

\begin{cor}\label{cor:arithmetic_function_r_simple_average}
  Let $r \in \ZZp$ and $\theta_r \in \Theta_{4,r}'$. Let $\ell \in \{0, \dots,
  r-1\}$. The function $\theta_\ell$ defined by $\theta_\ell(\e_1, \dots,
  \e_\ell) = \A(\theta_r(\e_1, \dots, \e_r),\e_r, \dots, \e_{\ell+1})$ has
  local factors $\theta_{\ell,p}$ given by
  \begin{equation*}
    \theta_{\ell,p}(I) = \sum_{J \subset \{\ell+1, \dots, r\}}
    \left(1-\frac 1 p\right)^{r-\ell-\#J} \left(\frac 1
      p\right)^{\#J}\theta_{r,p}(I\cup J),
  \end{equation*}
  for any $I \subset \{1, \dots, \ell\}$.  In particular,   
  \begin{equation*}
    \theta_0 = \prod_p \sum_{J \subset \{1, \dots, r\}}
    \left(1-\frac 1 p\right)^{r-\#J} \left(\frac 1
      p\right)^{\#J}\theta_{r,p}(J),
  \end{equation*}
  while $\A(\theta_r(\e_1, \dots, \e_r),\e_r)$ has local factors
  \begin{equation*}
    \theta_{r-1,p}(I) = \left(1-\frac 1 p\right)\theta_{r,p}(I) + \frac 1 p
    \theta_{r,p}(I\cup\{r\}).
  \end{equation*}
\end{cor}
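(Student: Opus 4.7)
The plan is to deduce the corollary from Lemma~\ref{lem:arithmetic_function_r_average} by exploiting the defining property of $\Theta_{3,r}'$, namely that the local factor $\theta_{r,p}(\kk,\kk')$ depends on $(\kk,\kk')$ only through its support.

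First, I would note that $\theta_r \in \Theta_{4,r}'(C)$ lies in $\Theta_{4,r}(2C)$ by Corollary~\ref{cor:arithmetic_function_r_simple_small_is_small}, so that Lemma~\ref{lem:arithmetic_function_r_average} applies and yields
\[
\theta_{\ell,p}(\kk) = \left(1-\frac 1 p\right)^{r-\ell} \sum_{\kk' \in \ZZnn^{r-\ell}} \frac{\theta_{r,p}(\kk,\kk')}{p^{\Sigma(\kk')}}
\]
for any $\kk \in \ZZnn^\ell$, where the coordinates of $\kk'$ are indexed by $\{\ell+1,\dots,r\}$.

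Next, I would partition the sum over $\kk'$ according to $J = \supp(\kk') \subset \{\ell+1, \dots, r\}$. Since $\theta \in \Theta_{3,r}'$, the value $\theta_{r,p}(\kk,\kk')$ depends only on $\supp(\kk) \cup \supp(\kk') = I \cup J$, where $I = \supp(\kk)$, so it can be pulled out of the inner sum. The remaining sum factors over coordinates:
\[
\sum_{\substack{\kk' \in \ZZnn^{r-\ell}\\ \supp(\kk') = J}} \frac{1}{p^{\Sigma(\kk')}} = \prod_{i \in J} \sum_{k_i = 1}^\infty \frac{1}{p^{k_i}} = \left(\frac{1}{p-1}\right)^{\#J}.
\]
Substituting back gives
\[
\theta_{\ell,p}(\kk) = \left(1-\frac 1 p\right)^{r-\ell} \sum_{J \subset \{\ell+1, \dots, r\}} \frac{\theta_{r,p}(I \cup J)}{(p-1)^{\#J}},
\]
which depends only on $I = \supp(\kk)$; this simultaneously confirms that $\theta_\ell \in \Theta_{3,\ell}'$ and gives a formula for its local factors. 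The identity $(1-1/p)^{r-\ell}/(p-1)^{\#J} = (1-1/p)^{r-\ell-\#J}(1/p)^{\#J}$ then yields the claimed expression for $\theta_{\ell,p}(I)$. The special cases $\ell=0$ and $\ell=r-1$ follow by direct specialisation.

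No step here is a serious obstacle: the only content is the geometric sum computation and the bookkeeping that transfers $\theta_{r,p}(\kk,\kk')$ to $\theta_{r,p}(I\cup J)$. The main point to be careful about is checking that the hypotheses of Lemma~\ref{lem:arithmetic_function_r_average} (hence of Corollary~\ref{cor:arithmetic_function_r_small_has_average}) are genuinely available for $\theta_r \in \Theta_{4,r}'(C)$, which is exactly the content of Corollary~\ref{cor:arithmetic_function_r_simple_small_is_small}.
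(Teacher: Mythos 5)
Your proposal is correct and follows exactly the paper's route: the paper's proof is the single sentence that the corollary is a special case of Lemma~\ref{lem:arithmetic_function_r_average}, applicable by Corollary~\ref{cor:arithmetic_function_r_simple_small_is_small}. You have merely made explicit the support-grouping and geometric-sum computation $\sum_{\supp(\kk')=J} p^{-\Sigma(\kk')} = (p-1)^{-\#J}$ that the paper leaves to the reader, and that computation is carried out correctly.
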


\begin{proof}
  This is a special case of Lemma~\ref{lem:arithmetic_function_r_average},
  which we may apply because of
  Corollary~\ref{cor:arithmetic_function_r_simple_small_is_small}.
\end{proof}

\section{Application to a quartic del Pezzo surface}\label{sec:qa1a3}

Let $S \subset \Pfour$ be the quartic del Pezzo surface defined by
\[x_0^2+x_0x_3+x_2x_4 = x_1x_3-x_2^2 = 0.\] It contains exactly two
singularities, namely $(0:0:0:0:1)$ of type $\Athree$ and $(0:1:0:0:0)$ of
type $\Aone$, and three lines, \[\{x_0=x_1=x_2=0\},\quad
\{x_0+x_3=x_1=x_2=0\},\quad \{x_0=x_2=x_3=0\}.\]

\begin{theorem}\label{thm:qa1a3_main}
  We have
  \[N_{U,H}(B) = \alpha(\tS)\left(\prod_p \omega_p\right) \omega_\infty B(\log
  B)^5 + O(B(\log B)^4(\log \log B)^2)\] for $B \ge 3$, where
  \begin{equation*}
    \begin{split}
      \alpha(\tS) &= \frac{1}{8640},\\
      \omega_p & = \left(1-\frac 1 p\right)^6 \left(1+\frac 6 p +\frac 1
        {p^2}\right) ,\\
      \omega_\infty & =
      \int_{|x_0|,|x_2|,|x_2^2/x_1|,|(x_0^2x_1+x_0x_2^2)/(x_1x_2)|\le 1,\ 0\le
        x_1 \le 1} \frac{1}{x_1x_2} \dd x_0 \dd x_1 \dd x_2.
    \end{split}
  \end{equation*}
\end{theorem}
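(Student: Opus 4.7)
The plan is to carry out the three-step universal torsor strategy outlined in the introduction, specialized to $\tS$, whose Picard rank is $k=6$ so that the $(\log B)^{k-1}=(\log B)^5$ growth matches the theorem. Since the quartic $\Athree+\Aone$ case appears in Table~\ref{tab:special_form} as having the ``special form'' of Section~\ref{sec:first_summation}, step~(1) reduces to computing $\Cox(\tS)$ via \cite{math.AG/0604194}, which yields the extended Dynkin diagram of shape Figure~\ref{fig:generic_dynkin} together with a single torsor equation of shape \eqref{eq:generic_torsor}. Applying the geometric method of \cite[Section~4]{MR2290499} then produces an explicit bijection between $U(\QQ)$-points of height $\le B$ and integral $8$-tuples $(\al_0,\beta_0,\gamma_0,\aa,\bb,\cc,\delta)$ satisfying this torsor equation, explicit height conditions of shape \eqref{eq:generic_height}, and the coprimality conditions read off from Figure~\ref{fig:generic_dynkin}.

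For step~(2), I first invoke Proposition~\ref{prop:generic_first_step} to sum out $\beta_0$ and $\gamma_0$, obtaining
\[
N_{U,H}(B)=\sum_{(\al_0,\aa,\bb,\cc,\delta)}\bigl(\theta_1\,V_1+R_1\bigr).
\]
For this surface the crude pointwise bound on $R_1$ from Proposition~\ref{prop:generic_first_step}, combined with the divisor-type estimates of Example~\ref{exa:phis_phid_omega} and Lemma~\ref{lem:sum_over_n}, should suffice to show that the total contribution of $R_1$ is $O(B(\log B)^4(\log\log B)^2)$; this is the role of the referenced Lemma~\ref{lem:sum_a1a3_first}. The remaining seven summations of the main term $\theta_1 V_1$ are then executed in one stroke by Proposition~\ref{prop:several_summations}. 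Two verifications are required: (a) that $\theta_1$ is a product of local factors depending only on the $p$-adic supports of the variables and obeying the bounds of Definition~\ref{def:arithmetic_function_r_simple_small}, so that by Corollary~\ref{cor:arithmetic_function_r_simple_small_is_small} it belongs to $\Theta_{2,7}(C)$ for some $C$; and (b) that $V_1$, defined by \eqref{eq:generic_V0}, satisfies a bound of shape \eqref{eq:completion_bound} with exponents $a_j,k_{i,j}$ read off from the height conditions, which follows by applying Lemma~\ref{lem:generic_bounds} to the defining integrals. Since Proposition~\ref{prop:several_summations} is insensitive to the shape of the nef cone, the fact that the nef cone of $\tS$ is neither simplicial nor a difference of two simplicial cones causes no difficulty here.

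Step~(3) identifies the resulting main term with Peyre's prediction. The infinite product produced by the seven summations, given by Corollary~\ref{cor:arithmetic_function_r_simple_average}, must be checked prime by prime to coincide with $\prod_p\omega_p$ where $\omega_p=(1-1/p)^6(1+6/p+1/p^2)$; this is a finite algebraic identity at each $p$, obtained by expanding $\theta_{1,p}(J)$ over the subsets $J\subset\{1,\dots,7\}$ of variables meeting $p$. The remaining eight-dimensional integral of $V_1$ must then be manipulated---substituting the torsor equation to eliminate one coordinate and passing back to the coordinates $(x_0,x_1,x_2)$ on $S\subset\Pfour$---into $\alpha(\tS)\,\omega_\infty\,B(\log B)^5$, where $\alpha(\tS)=1/8640$ arises as the volume of the six-dimensional polytope cut out by the nef cone of $\tS$ inside a suitable hyperplane of $\Pic(\tS)_\RR$. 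The main obstacle is precisely this volume manipulation: the non-simpliciality of the nef cone means that the $\alpha$-polytope has no obvious product decomposition, and isolating $\alpha(\tS)$ from $\omega_\infty$ requires a delicate change of variables, following the approach used for the quartic $\Afour$ surface in \cite{arXiv:0710.1560}. The estimation of $R_1$ in step~(2) is a secondary, more routine, technical obstacle.
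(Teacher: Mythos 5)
Your proposal follows essentially the same route as the paper: the bijection of Lemma~\ref{lem:qa1a3_bijection} via the Cox ring, the first summation via Proposition~\ref{prop:generic_first_step} with the crude bound on $R_1$ (which in fact yields the stronger $O(B(\log B)^2)$; the $(\log\log B)^2$ in the final error comes from Proposition~\ref{prop:several_summations} with $s=2$), the remaining seven summations after checking $\theta_1\in\Theta_{4,7}'(2)$ and the bound on $V_1$ via Lemma~\ref{lem:generic_bounds}, and finally the volume manipulation in the style of the quartic $\Afour$ case, which the paper implements by comparing $\R(B)$ with a region $\R'(B)$ adapted to the polytope $P'$. This is exactly the paper's argument, so the plan is correct as stated.
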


\begin{remark}
  We note that $S$ is not an equivariant compactification of the additive
  group $\Ga^2$, so that Theorem~\ref{thm:qa1a3_main} does not follow from the
  general results of \cite{MR1906155}.

  Indeed, the projection $S \rto \Ptwo$ from the line $\{x_0=x_1=x_2=0\}$ is
  an isomorphism between the complement $U$ of the three lines in $S$ and the
  complement of two lines in $\Ptwo$. If $S$ were an equivariant
  compactification of $\Ga^2$, then there would be a $\Ga^2$-structure on
  $\Ptwo$ fixing two lines, contradicting \cite[Proposition~3.2]{MR1731473}.
\end{remark}

Since all lines on $S$ are defined over $\QQ$, the minimal desingularization
$\tS$ of $S$ is the blow-up of $\Ptwo$ in five rational points, so that
$\Pic(\tS) \cong \ZZ^6$.  The effective cone in $\Pic(\tS)_\RR = \Pic(\tS)
\otimes_\ZZ \RR \cong \RR^6$ of $\tS$ has seven generators. The investigation
of the geometry of $\tS$ in \cite[Section~7]{math.AG/0604194} shows the
intersection of its dual (with respect to the intersection form $(\cdot,
\cdot)$ on $\Pic(\tS)_\RR$) with the hyperplane $\{\tt \in \Pic(\tS)_\RR
\where (\tt,-K_\tS)=1\}$ is the polytope
\begin{equation}\label{eq:qa1a3_P}
  \begin{split}
    P&=\left\{(t_1, \dots, t_6) \in \RRnn^6 \Where
      \begin{aligned}
        &t_1+t_2+t_3-2t_5-t_6\ge 0,\\
        &2t_1+2t_2+3t_3+2t_4+t_6=1
      \end{aligned}
    \right\}\\
    \cong P' &=\left\{(t_1, \dots, t_5) \in \RRnn^5 \Where
      \begin{aligned}
        &2t_1+2t_2+3t_3+2t_4 \le 1, \\
        &3t_1+3t_2+4t_3+2t_4-2t_5 \ge 1
      \end{aligned}
    \right\}
  \end{split}  
\end{equation}

We check that Theorem~\ref{thm:qa1a3_main} agrees with the conjectures of
Yu.~I.~Manin \cite{MR89m:11060} and E.~Peyre \cite{MR1340296} that predict an
asymptotic formula with main term $cB(\log B)^k$, where $k = \rk \Pic(\tS)-1$
and $c$ is the the product of local densities and $\vol(P)$. Indeed, $\rk
\Pic(\tS) = 6$ since $S$ is split. By a computation as in
\cite[Lemma~1]{MR2320172}, $\omega_p$ resp.\ $\omega_\infty$ as in the
statement of Theorem~\ref{thm:qa1a3_main} agree with the density of $p$-adic
resp.\ real points on $S$. Finally, \[\vol(P) = \vol(P') = \alpha(\tS) =
\frac{1/180}{\#W(\Aone)\cdot \#W(\Athree)}= \frac{1}{8640}\] by
\cite[Theorem~4]{MR2318651} and \cite[Theorem~1.3]{MR2377367}, where
$W(\bA_i)$ is the Weyl group of the root system $\bA_i$.

\subsection{Passage to a universal torsor}

We carry out step \eqref{it:strategy_passage} of the strategy described in
Section~\ref{sec:counting_introduction}. Let
\begin{equation*}
  \ee = (\e_1, \dots, \e_7), \quad \ee' = (\e_1, \dots, \e_8), \quad \ee'' = (\e_1, \dots, \e_9), \quad
  \ee^\kk = \e_1^{k_1}\cdots\e_7^{k_7},
\end{equation*}
for any $\kk = (k_1, \dots, k_7) \in \RR^7$.
For $i = 1, \dots, 9$, let 
\begin{equation}\label{eq:qa1a3_ranges}
  (\ZZ_i, J_i, J_i')=
  \begin{cases}
    (\ZZp, \RR_{\ge 1}, \RR_{\ge 1}), & i \in \{1, \dots, 5\},\\
    (\ZZp, \RR_{\ge 1}, \RRnn), & i =6,\\
    (\ZZnz, \RR_{\le -1} \cup \RR_{\ge 1}, \RR) , & i=7,\\
    (\ZZ, \RR,\RR), & i \in \{8, 9\}.
  \end{cases}
\end{equation}

\begin{figure}[ht]
  \centering
  \[\xymatrix{E_9 \ar@{-}[dr] \ar@{-}[dd] \ar@{-}[rrrr]& & & & E_1
    \ar@{-}[dr]\\
    & E_7 \ar@{-}[r] & E_5 \ar@{-}[r] & E_6 \ar@{-}[r] & E_4 \ar@{-}[r] & E_3\\
    E_8 \ar@{-}[ur] \ar@{-}[rrrr] & & & & E_2 \ar@{-}[ur]}\]
  \caption{Configuration of curves on $\tS$.}
  \label{fig:qa1a3_dynkin}
\end{figure}

The following result is based on our investigation
\cite[Section~7]{math.AG/0604194} of \[\Cox(\tS)=\QQ[\e_1, \dots,
\e_9]/(\e_1\e_9+\e_2\e_8+\e_4\e_5^3\e_6^2\e_7),\] where $\TtS$ an open subset
of $\Spec(\Cox(\tS))$. It is derived using the method developed in
\cite[Section~4]{MR2290499}. Figure~\ref{fig:qa1a3_dynkin} shows the
configuration of curves $E_1, \dots, E_9$ on $\tS$ that correspond to the
generators $\e_1, \dots, \e_9$ of $\Cox(\tS)$, with edges between pairs of
intersecting curves. Here, $E_1, E_2, E_5$ are strict transforms of the three
lines $\{x_0+x_3=x_1=x_2=0\}$, $\{x_0=x_1=x_2=0\}$, $\{x_0=x_2=x_3=0\}$, while
$E_3, E_4, E_6$ and $E_7$ are the exceptional divisors obtained by blowing up
the $\Athree$ and $\Aone$ singularities.

\begin{lemma}\label{lem:qa1a3_bijection}
  The map $\psi: \TtS \to S$ defined by
  \begin{equation*}
    \ee'' \mapsto(\base 0 1 1 1 1 1 1\e_8, \base 2 2 3
    2 0 1 0, \base 1 1 2 2 2 2 1, \base 0 0 1 2 4 3 2, \e_7\e_8\e_9)
  \end{equation*}
  induces a bijection $\Psi$ between \[T_0(B)=\{\ee'' \in \ZZ_1 \times \dots
  \times \ZZ_9 \where \text{(\ref{eq:qa1a3_torsor}), (\ref{eq:height}),
    (\ref{eq:cpe}) hold}\}\] and $\{\xx \in U(\QQ) \mid H(\xx) \le B\}$, where
  \begin{gather}
    \label{eq:qa1a3_torsor}
    \e_1\e_9+\e_2\e_8+\e_4\e_5^3\e_6^2\e_7 = 0,\\
    \label{eq:height}
    \max_{i \in \{0, \dots, 4\}}|\Psi(\ee'')_i| \le B,\\
    \label{eq:cpe} \text{$\e_1, \dots, \e_9$ fulfill coprimality conditions
      as in Figure~\ref{fig:qa1a3_dynkin}.}
  \end{gather}
\end{lemma}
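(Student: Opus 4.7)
The plan is to apply the geometric torsor parameterization method of \cite[Section~4]{MR2290499}, working with the presentation
\[
\Cox(\tS) = \QQ[\e_1,\dots,\e_9]/(\e_1\e_9+\e_2\e_8+\e_4\e_5^3\e_6^2\e_7).
\]
First I would check that $\psi$ lands in $S$ by substituting the monomials for $x_0,\dots,x_4$ into the two defining equations. A direct monomial comparison yields $x_1 x_3 = x_2^2 = \e_1^2\e_2^2\e_3^4\e_4^4\e_5^4\e_6^4\e_7^2$, while factoring $\e_2\e_3^2\e_4^2\e_5^2\e_6^2\e_7^2\e_8$ out of the other equation gives
\[
x_0^2 + x_0 x_3 + x_2 x_4 = \e_2\e_3^2\e_4^2\e_5^2\e_6^2\e_7^2\e_8\,(\e_1\e_9 + \e_2\e_8 + \e_4\e_5^3\e_6^2\e_7),
\]
which vanishes on $\TtS$ by~(\ref{eq:qa1a3_torsor}). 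The image lies in $U$ because each of the three lines on $S$ is cut out by the vanishing of some $\e_i$ forbidden by the range conditions in~(\ref{eq:qa1a3_ranges}).

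Next I would verify that the sign and positivity conventions of~(\ref{eq:qa1a3_ranges}) select a unique representative in each orbit of the N\'eron--Severi torus $\{\pm1\}^6$ acting on $\TtS$, so that $\Psi$ is well-defined on integer points. The coprimality conditions~(\ref{eq:cpe}) encoded by Figure~\ref{fig:qa1a3_dynkin} then force $\gcd(\Psi(\ee'')_0,\dots,\Psi(\ee'')_4)=1$, and hence $H(\Psi(\ee''))=\max_i|\Psi(\ee'')_i|$, so that~(\ref{eq:height}) becomes exactly the condition $H\le B$. The coprimality of the $x_i$ is checked prime by prime: if $p$ divides some $\e_i$, Figure~\ref{fig:qa1a3_dynkin} constrains which other $\e_j$ may share $p$, and in each admissible pattern at least one of $x_0,\dots,x_4$ remains coprime to $p$.

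For bijectivity I would construct the inverse by iterated gcd extractions along Figure~\ref{fig:qa1a3_dynkin}, following the recipe of \cite[Section~4]{MR2290499}. Given $\xx \in U(\QQ)$ with coprime integer coordinates $x_0,\dots,x_4$, one extracts $\e_3$ as a suitable gcd of $x_1,x_2,x_3$, then $\e_4$ after dividing out the $\e_3$-powers, and continues along the chain, recovering $\e_1,\e_2$ from $x_1,x_2$ and $\e_5,\e_6,\e_7$ from $x_3$. The variables $\e_8,\e_9$ are determined by $x_0=\e_2\e_3\e_4\e_5\e_6\e_7\e_8$ and the torsor equation (which then forces $x_4=\e_7\e_8\e_9$ automatically). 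Integrality and uniqueness are guaranteed by the coprimalities already established, together with the sign choices in~(\ref{eq:qa1a3_ranges}).

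The main obstacle is the intricate but routine bookkeeping needed to check that the $\ee''$ produced by this inverse recipe satisfies \emph{every} pairwise coprimality of~(\ref{eq:cpe}); each required coprimality must be traced back to $\gcd(x_0,\dots,x_4)=1$ and the maximality of the gcds used at each stage. No new technique is required: this is a specialization of the general template of \cite[Section~4]{MR2290499} to the extended Dynkin diagram of Figure~\ref{fig:qa1a3_dynkin}. Once the lemma is in hand, the substance of Theorem~\ref{thm:qa1a3_main} lies in the counting carried out in the subsequent subsections via Propositions~\ref{prop:generic_first_step} and~\ref{prop:several_summations}.
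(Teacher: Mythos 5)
Your proposal is correct and follows exactly the route the paper intends: the paper gives no written proof of this lemma beyond citing the gcd-extraction method of \cite[Section~4]{MR2290499} applied to the Cox ring presentation from \cite[Section~7]{math.AG/0604194}, and your explicit verification that the monomials satisfy the two quartic equations (in particular the factorization of $x_0^2+x_0x_3+x_2x_4$ as $\e_2\e_3^2\e_4^2\e_5^2\e_6^2\e_7^2\e_8$ times the torsor equation) is accurate. The only simplification worth noting is that all three lines lie in $\{x_2=0\}$ and $x_2=\base 1 1 2 2 2 2 1$ is nonzero under the range conditions~(\ref{eq:qa1a3_ranges}), so the image automatically lands in $U$.
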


Using (\ref{eq:qa1a3_torsor}) to eliminate $\e_9$, the height condition
(\ref{eq:height}) is equivalent to $h(\ee';B) \le 1$, where
\begin{equation*}
  h(\ee';B) = B^{-1} \max\left\{
    \begin{aligned}
      &|\base 0 1 1 1 1 1 1\e_8|, |\base 2 2 3 2 0 1 0|, |\base 1 1 2 2 2 2
      1|,\\ &|\base 0 0 1 2 4 3 2|,
      |\e_1^{-1}(\e_2\e_7\e_8^2+\e_4\e_5^3\e_6^2\e_7^2\e_8)|
    \end{aligned}
  \right\}.
\end{equation*}

\subsection{Counting points}

We come to step \eqref{it:strategy_counting} of our strategy. We recall the
definition (\ref{eq:qa1a3_ranges}) of $J_1, \dots, J_8$ and define \[\R(B)=
\{\ee' \in J_1 \times \dots \times J_8 \mid h(\ee';B) \le 1\}.\] Using the
results of Sections~\ref{sec:first_summation}, \ref{sec:generic_completion}
and \ref{sec:arithmetic_r}, we show (Lemma~\ref{lem:qa1a3_summation_rest})
that the number of integral points in the region $\R(B)$ on $\TtS$ that
satisfy the coprimality conditions~(\ref{eq:cpe}) can be approximated by the
product of the volume of $\R(B)$ and $p$-adic densities coming from the
coprimality conditions.

\begin{lemma}\label{lem:sum_a1a3_first}
  We have \[N_{U,H}(B) = \sum_{\ee \in \ZZ_1 \times \dots \times \ZZ_7}
  \theta_1(\ee)V_1(\ee;B) + O(B(\log B)^2),\] where
  \[V_1(\ee;B)=\int_{\ee' \in \R(B)} \e_1^{-1} \dd \e_8\]
  and, in the notation of Definition~\ref{def:arithmetic_function_r_simple},
  \begin{equation*}
    \theta_1(\ee) = \prodp{\theta_{1,p}(I_p(\ee))}\] with $I_p(\ee) = \{i \in \{1, \dots, 7\} \where p
    \div \e_i\}$ and  \[\theta_p(I)=
  \begin{cases}
    1, & I = \emptyset, \{1\}, \{2\}, \{7\},\\
    1-\frac 1 p, & I = \{4\}, \{5\}, \{6\}, \{1,3\}, \{2,3\}, \{3,4\},
    \{4,6\}, \{5,6\}, \{5,7\},\\
    1-\frac 2 p, & I = \{3\},\\
    0, & \text{all other $I \subset \{1, \dots, 7\}$.}
  \end{cases}
\end{equation*}
\end{lemma}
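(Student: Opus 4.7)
The plan is to recognize this counting problem as an instance of the special form of Section~\ref{sec:first_summation} and apply Proposition~\ref{prop:generic_first_step} directly. Comparing the extended Dynkin diagram of Figure~\ref{fig:qa1a3_dynkin} with Figure~\ref{fig:generic_dynkin}, the central vertex $D$ corresponds to $\delta=\e_3$, and the three chains yield the identification $\al_0=\e_7$ with $\aa=(\al_1,\al_2,\al_3)=(\e_4,\e_6,\e_5)$ and exponents $(a_0,a_1,a_2,a_3)=(1,1,2,3)$; $\beta_0=\e_8$ with $\bb=(\e_2)$ and $(b_0,b_1)=(1,1)$; and $\gamma_0=\e_9$ with $\cc=(\e_1)$ and $c_1=1$. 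Under this identification, the torsor equation~(\ref{eq:qa1a3_torsor}) takes the form~(\ref{eq:generic_torsor}), the coprimality conditions~(\ref{eq:cpe}) match those encoded by Figure~\ref{fig:generic_dynkin}, and the explicit form of $h(\ee';B)\le 1$ restricts $\e_8$ to a single bounded real interval (so $n=1$).

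Applying Proposition~\ref{prop:generic_first_step} yields
\[
N_{U,H}(B)=\sum_{\ee\in\ZZ_1\times\cdots\times\ZZ_7}\theta_1(\ee)V_1(\ee;B)+\sum_{\ee}R_1(\ee;B),
\]
with $V_1(\ee;B)=\int_{\R(B)}\Pi(\cc)^{-1}\,\dd\e_8=\int_{\R(B)}\e_1^{-1}\,\dd\e_8$, as desired, and with the pointwise bound $R_1\ll 2^{\omega(\e_3\e_4\e_5\e_6)}$ coming from Proposition~\ref{prop:generic_first_step} (since $b_0=1$, $\Pi'(\delta,\cc)=\e_3$ and $\Pi'(\delta,\bb)\Pi(\aa)=\e_3\e_4\e_6^2\e_5^3$).

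To show $\sum_\ee |R_1|\ll B(\log B)^2$, I would use that the support of $R_1$ is contained in the set of $\ee$ for which $h(\ee';B)\le 1$ has some solution $\e_8$, a condition controlled by the $\e_8$-free height constraints $|\e_1^2\e_2^2\e_3^3\e_4^2\e_6|\le B$, $|\e_1\e_2\e_3^2\e_4^2\e_5^2\e_6^2\e_7|\le B$ and $|\e_3\e_4^2\e_5^4\e_6^3\e_7^2|\le B$. Since the bound on $R_1$ does not depend on $\e_1,\e_2,\e_7$, summing first over $\e_7$ (using the third constraint) and over the pair $\e_1,\e_2$ (using the first constraint, producing a divisor-sum factor of $\log B$) and then applying Lemma~\ref{lem:sum_over_n} together with Example~\ref{exa:phis_phid_omega} (to handle the $2^\omega$ factor) to the remaining variables $\e_3,\e_4,\e_5,\e_6$ gives a convergent tail and the claimed bound.

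Finally, $\theta_1(\ee)$ must be rewritten as $\prod_p\theta_{1,p}(I_p(\ee))$. Multiplicativity of $\mu$, $\phi^*$ and $\gcd$ in the formula of Proposition~\ref{prop:generic_first_step}, together with the observation that $b_0=1$ forces the $\rho$-count in~(\ref{eq:generic_rho}) to be $0$ or $1$ at each prime, reduces this to a prime-local calculation; the resulting $\theta_{1,p}(I)$ depends only on the set $I\subset\{1,\dots,7\}$ of indices $i$ with $p\mid\e_i$. The coprimality diagram forces $\theta_{1,p}(I)=0$ whenever $I$ contains a pair of non-adjacent vertices in Figure~\ref{fig:qa1a3_dynkin}; for the remaining admissible subsets, the Möbius sum over $k_c\in\{1,p\}$ (since $k_c\mid\e_3$) contributes an extra $-1/p$ precisely when $3\in I$, accounting for the value $1-2/p$ at $I=\{3\}$, while the $\phi^*$ factors produce the $1-1/p$ terms at the listed adjacent pairs and singletons. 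The main obstacle is organising this case-by-case bookkeeping; the error estimation is comparatively routine once the height constraints are in hand.
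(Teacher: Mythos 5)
Your proposal is correct and follows essentially the same route as the paper: identify the counting problem with the special form of Section~\ref{sec:first_summation} via exactly the dictionary the paper uses, invoke Proposition~\ref{prop:generic_first_step}, bound the error sum using the height constraints that are free of $\e_8$ (your use of the first and third constraints is equivalent to the paper's use of $|\base 1122221|\le B$, which is their product), and compute the local factors of $\theta_1$ prime by prime. Only trivial slips: the crude bound from the proposition is $2^{\omega(\e_3)+\omega(\e_3\e_4\e_5\e_6)}$ rather than $2^{\omega(\e_3\e_4\e_5\e_6)}$, and the quadratic height condition in $\e_8$ gives a union of boundedly many intervals rather than necessarily $n=1$ --- neither affects the argument.
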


\begin{proof}
  By Lemma~\ref{lem:qa1a3_bijection}, our counting problem has the special
  form of Section~\ref{sec:first_summation}. Table~\ref{tab:dict} provides a
  dictionary between the notation of Section~\ref{sec:first_summation} and the
  present situation.

  \begin{table}[ht]
    \begin{center}
      \begin{equation*}
        \begin{array}{|r|l||r|l|}
          \hline
          (r,s,t) & (3,1,1)&  \delta & \eta_3\\
          \hline 
          (\al_0;\al_1,\ldots,\al_r) & (\e_7;\e_4,\e_6,\e_5) &
          (a_0;a_1,\ldots,a_r) & (1;1,2,3) \\
          \hline
          (\beta_0;\beta_1,\ldots,\beta_s) & (\e_8;\e_2) &
          (b_0;b_1,\ldots,b_s) & (1;1) \\
          \hline
          (\gamma_0;\gamma_1,\ldots,\gamma_t) & (\e_9;\e_1) &
          (c_1,\ldots,c_t) & (1,1) \\
          \hline
          \Pi(\aa) & \e_4\e_5^3\e_6^2 & \Pi'(\delta,\aa)) & \e_3\e_4\e_6\\
          \hline
          \Pi(\bb) & \e_2 & \Pi'(\delta,\bb)) & \e_3\\
          \hline
          \Pi(\cc) & \e_1 & \Pi'(\delta,\cc)) & \e_3\\
          \hline
        \end{array}
      \end{equation*}
    \end{center}
    \caption{Application of Proposition~\ref{prop:generic_first_step}.}
    \label{tab:dict}
  \end{table}

  By Proposition~\ref{prop:generic_first_step},
  \[N_{U,H}(B) = \sum_{\ee \in \ZZ_1 \times \dots \times \ZZ_7}
  (\theta_1(\ee)V_1(\ee;B)+R_1(\ee;B)),\] where local factors of $\theta_1$ as
  in the statement of Proposition~\ref{prop:generic_first_step} are easily
  computed to be the ones in the statement of this lemma, and \[R_1(\ee;B) \ll
  2^{\omega(\e_3)+\omega(\e_3\e_4\e_5\e_6)}.\] Both $N_1$ and $V_1$ and
  therefore also $R_1$ vanish unless $|\base 1 1 2 2 2 2 1| \le B$, so
  \begin{equation*}
  \begin{split}
    \sum_{\ee} R_1(\ee;B) \ll{}&\sum_{\ee}
    2^{\omega(\e_3)+\omega(\e_3\e_4\e_5\e_6)}\\
    \ll{}& \sum_{\e_1, \dots, \e_6}
    \frac{2^{\omega(\e_3)+\omega(\e_3\e_4\e_5\e_6))}B}{\base 1 1 2
      2 2 2 0}\\
    \ll{}& B(\log B)^2.
  \end{split}
\end{equation*}
This completes the proof.
\end{proof}

\begin{lemma}\label{lem:qa1a3_summation_rest}
  We have
  \[N_{U,H}(B) = \left(\prodp{\omega_p}\right) V_0(B)+O(B(\log B)^4(\log \log
  B)^2),\]
  where
  \begin{equation*}
    V_0(B)=\int_{\ee} V_1(\ee;B) \dd \ee = \int_{\ee' \in \R(B)} \e_1^{-1} \dd
      \ee'.
  \end{equation*}
\end{lemma}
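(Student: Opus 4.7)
The plan is to sum the main term of Lemma~\ref{lem:sum_a1a3_first} over $\e_1, \dots, \e_7$ by a single application of Proposition~\ref{prop:several_summations} with parameters $r = 5$ and $s = 2$, so that $r+s = 7$ matches the number of discrete variables. This yields an error term of exactly the shape $B(\log B)^{r-1}(\log \log B)^{\max\{1,s\}} = B(\log B)^4(\log\log B)^2$, which absorbs the $O(B(\log B)^2)$ error already present in Lemma~\ref{lem:sum_a1a3_first}. The remainder of the argument then consists of identifying the resulting main term with $\prodp{\omega_p}\cdot V_0(B)$.

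For the arithmetic input, I would check that $\theta_1 \in \Theta_{2,7}(C)$ for some $C \in \RRnn$. The local factors $\theta_{1,p}(I)$ listed in Lemma~\ref{lem:sum_a1a3_first} depend only on $I \subset \{1, \dots, 7\}$, so $\theta_1 \in \Theta_{3,7}'$. Inspecting the table, $\theta_{1,p}(\emptyset) = 1$, $|\theta_{1,p}(I) - 1| \le 2/p$ for every singleton $I$, and $\theta_{1,p}(I) \in \{0, 1 - 1/p\}$ for $\#I \ge 2$, with all values in $[0,1]$. Hence the bounds of Definition~\ref{def:arithmetic_function_r_simple_small} hold, giving $\theta_1 \in \Theta_{4,7}'(2)$, and Corollary~\ref{cor:arithmetic_function_r_simple_small_is_small} promotes this to $\theta_1 \in \Theta_{2,7}(C')$.

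For the analytic input, I would verify the hypotheses on $V_1$ preceding Proposition~\ref{prop:several_summations}. Reading off the range of $\e_8$ from the first height condition $|\base 0 1 1 1 1 1 1\e_8| \le B$ yields the trivial estimate $V_1(\ee;B) \ll B/(\e_1\cdots\e_7)$. The two further height constraints $\base 2 2 3 2 0 1 0 \le B$ and $\base 0 0 1 2 4 3 2 \le B$ force $V_1$ to vanish outside their support; placing the two actively constrained variables ($\e_6$ and $\e_7$) in the last two slots aligns the nonzero entries with the positions $r-s+j$ and $r+j$ prescribed by Proposition~\ref{prop:several_summations}, yielding~\eqref{eq:bound_s_completion}. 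Interpolating the trivial bound against these two constraints then produces the shape~\eqref{eq:completion_bound} with small positive $a_1, a_2$. The range bound~\eqref{eq:bound_r_completion} is immediate, and the required smoothness plus finite sign-change of the derivatives of the partial integrals $V_\ell$ is routine, since the domain is cut out by finitely many polynomial inequalities.

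With these hypotheses in place, Proposition~\ref{prop:several_summations} gives
\[\sum_{\ee} \theta_1(\ee) V_1(\ee;B) = c_0 \int_\ee V_1(\ee;B)\,\dd \ee + O(B(\log B)^4(\log\log B)^2),\]
with $c_0 = \A(\theta_1(\ee), \e_7, \dots, \e_1)$. By Corollary~\ref{cor:arithmetic_function_r_simple_average},
\[c_0 = \prodp{\sum_{J \subset \{1,\dots,7\}} \Big(1-\frac 1 p\Big)^{7-\#J}\Big(\frac 1 p\Big)^{\#J}\theta_{1,p}(J)}.\]
Only $J = \emptyset$, the seven singletons, and the six specific pairs from Lemma~\ref{lem:sum_a1a3_first} contribute. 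Collecting the singleton values yields $7 - 5/p$ and the pair values yield $6$, so each local factor simplifies to
\[\Big(1-\frac 1 p\Big)^6\Big[\Big(1-\frac 1 p\Big) + \frac{7-5/p}{p} + \frac{6}{p^2}\Big] = \Big(1-\frac 1 p\Big)^6\Big(1 + \frac 6 p + \frac 1{p^2}\Big) = \omega_p.\]
Finally, Fubini gives $\int_\ee V_1(\ee;B)\,\dd\ee = \int_{\R(B)} \e_1^{-1}\,\dd\ee' = V_0(B)$, completing the proof.

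The main obstacle is pinning down the precise form of~\eqref{eq:completion_bound}: choosing the pair of height conditions and the ordering of the seven variables so that the nonzero entries of the exponent vectors land in the positions $r-s+j = 4, 5$ and $r+j = 6, 7$ demanded by Proposition~\ref{prop:several_summations}, and producing $a_1, a_2 > 0$ for which the interpolated bound actually holds. All other ingredients — the Euler factor computation, the $\Theta_{4,7}'$ membership of $\theta_1$, and the final Fubini step — are immediate once this framework is in place.
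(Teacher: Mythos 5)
Your proposal is correct and follows essentially the same route as the paper: a single application of Proposition~\ref{prop:several_summations} with $(r,s)=(5,2)$ and $a_1=a_2=1/4$ in the natural ordering of $\e_1,\dots,\e_7$, after checking $\theta_1\in\Theta_{4,7}'(2)$ and evaluating the Euler product via Corollary~\ref{cor:arithmetic_function_r_simple_average}. The only (immaterial) difference is that you obtain~\eqref{eq:completion_bound} by interpolating the trivial estimate $V_1(\ee;B)\ll B/(\e_1\cdots\e_6|\e_7|)$ against the two height constraints, whereas the paper reads it off from Lemma~\ref{lem:generic_bounds}\eqref{it:generic_bounds_b}; both yield the identical inequality $V_1(\ee;B)\ll B^{1/2}(\e_1\e_2|\e_7|)^{-1/2}$.
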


\begin{proof}
  Clearly $\theta_1 \in \Theta_{4,7}'(2)$, so  $\theta_1 \in \Theta_{2,7}(C)$
  for some $C \in \ZZp$ by
  Corollary~\ref{cor:arithmetic_function_r_simple_small_is_small}.
  By Lemma~\ref{lem:generic_bounds}\eqref{it:generic_bounds_b},
  \begin{multline*}
    V_1(\ee;B) \ll \frac{B^{1/2}}{\e_1^{1/2}\e_2^{1/2}|\e_7|^{1/2}} \\
    = \frac{B}{|\base 1 1 1 1 1 1 1|} \cdot \left(\frac{B}{|\base 2 2 3 2 0
        1 0|}\right)^{-1/4}\left(\frac{B}{|\base 0 0 1
        2 4 3 2|}\right)^{-1/4}.
  \end{multline*}
  As $V_1(\ee;B) = 0$ unless $1 \le \e_1, \dots, \e_7 \le B$ and $|\base
  2232010| \le B$ and $|\base 0 0 1 2 4 3 2| \le B$, we can apply
  Proposition~\ref{prop:several_summations} with $(r,s)=(5,2)$, $a_1=a_2=1/4$,
  \begin{equation*}
    (k_{i,j})_{\substack{1 \le i \le 7\\ 1 \le j \le 2}} =
    \begin{pmatrix}
      2 & 2 & 3 & 2 & 0 & 1 & 0\\
      0 & 0 & 1 & 2 & 4 & 3 & 2
    \end{pmatrix}.
  \end{equation*}

  We compute
  \begin{equation*}
    \A(\theta_1(\ee), \e_7, \dots, \e_1)= \prodp{\left(1-\frac 1
        p\right)^6\left(1+\frac 6 p +\frac 1 {p^2}\right)} = \prodp{\omega_p}
  \end{equation*}
  using Corollary~\ref{cor:arithmetic_function_r_simple_average}.
\end{proof}

\subsection{The expected leading constant}

We carry out step~\eqref{it:strategy_manipulation} of our strategy. This step
is necessary as Lemma~\ref{lem:qa1a3_omega_infty} shows that the main term in
Theorem~\ref{thm:qa1a3_main} is obtained by replacing the integral over
$\R(B)$ by an integral over a region $\R'(B)$ that is closely related to the
shape of the polytope $P'$ (\ref{eq:qa1a3_P}).  Recalling
(\ref{eq:qa1a3_ranges}), we define
\begin{equation*}
  \begin{split}
    &\R_1'(B)=\{(\e_1, \dots, \e_5) \in J'_1\times \dots \times J'_5 \mid
    \e_1^2\e_2^2\e_3^3\e_4^2 \le B,\ \e_1^3\e_2^3\e_3^4\e_4^2\e_5^{-2}
    \ge B\},\\
    &\R_2'(\e_1, \dots, \e_5;B) = \{(\e_6,\e_7,\e_8) \in J'_6\times J'_7
    \times J'_8 \mid h(\e_1, \dots,\e_8;B) \le B\},\\
    &\R'(B)= \{(\e_1, \dots,\e_8) \in \RR^8 \mid (\e_1, \dots, \e_5) \in
    \R_1'(B), (\e_6,\e_7,\e_8) \in \R_2'(\ee;B)\}
  \end{split}
\end{equation*}
and
\begin{equation*}
  V_0'(B) = \int_{\ee' \in \R'(B)} \e_1^{-1} \dd \ee'.
\end{equation*}

\begin{lemma}\label{lem:qa1a3_omega_infty}
  We have
  \begin{equation*}
     V_0'(B) = \alpha(\tS)\omega_\infty
    B(\log B)^5.
  \end{equation*}
\end{lemma}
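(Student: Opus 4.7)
The plan is a two-step change of variables in $V_0'(B) = \int_{\R'(B)} \e_1^{-1} \dd\ee'$: an inner substitution in $(\e_6,\e_7,\e_8)$ (with $\e_1,\dots,\e_5$ fixed) producing the factor $\omega_\infty$, followed by a logarithmic substitution $\e_i = B^{t_i}$ in $(\e_1,\dots,\e_5)$ producing $\vol(P')(\log B)^5$. By Fubini,
\begin{equation*}
V_0'(B) = \int_{\R_1'(B)} V_2(\e_1,\dots,\e_5;B) \dd\e_1\cdots\dd\e_5, \quad V_2 := \int_{\R_2'(\e_1,\dots,\e_5;B)} \e_1^{-1}\dd\e_6\dd\e_7\dd\e_8.
\end{equation*}

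For the inner integral, I substitute $x_0 = \base 0 1 1 1 1 1 1\e_8/B$, $x_1 = \base 2 2 3 2 0 1 0/B$, $x_2 = \base 1 1 2 2 2 2 1/B$, which are the first three coordinates of $\Psi(\ee'')/B$ from Lemma~\ref{lem:qa1a3_bijection}. With $\e_1,\dots,\e_5$ positive and fixed, this map is triangular in the order $\e_6 \to x_1$, $\e_7 \to x_2$, $\e_8 \to x_0$, with Jacobian $x_0x_1x_2/(\e_6\e_7\e_8)$. A direct simplification, using $\base 0 1 1 1 1 1 1\cdot\base 2 2 3 2 0 1 0\cdot\base 1 1 2 2 2 2 1/B^3 = \e_1^3\e_2^4\e_3^6\e_4^5\e_5^3\e_6^4\e_7^2\e_8/B^3$, yields
\begin{equation*}
\e_1^{-1}\dd\e_6\dd\e_7\dd\e_8 = \frac{B}{\e_1\e_2\e_3\e_4\e_5}\cdot\frac{\dd x_0 \dd x_1 \dd x_2}{x_1 x_2}.
\end{equation*}

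The second task in this step is to verify that the image of $\R_2'(\e_1,\dots,\e_5;B)$ in the $(x_0,x_1,x_2)$ variables is exactly the domain of $\omega_\infty$. The first three height conditions in $h(\ee';B)\le 1$ become $|x_0|,|x_1|,|x_2|\le 1$; the condition $\e_6\in J_6' = \RRnn$ gives $x_1\ge 0$; and the last two height conditions transform respectively into $|x_2^2/x_1|\le 1$ and $|(x_0^2x_1+x_0x_2^2)/(x_1x_2)|\le 1$. The latter two translations are forced by the defining equations $x_1x_3 = x_2^2$ and $x_0^2+x_0x_3+x_2x_4 = 0$ of $S$, which express $x_3$ and $x_4$ rationally in $x_0,x_1,x_2$ on $U$. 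Hence $V_2(\e_1,\dots,\e_5;B) = B\omega_\infty/(\e_1\e_2\e_3\e_4\e_5)$.

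For the outer integral, I substitute $\e_i = B^{t_i}$ for $i=1,\dots,5$, so that $\dd\e_i/\e_i = (\log B)\dd t_i$. The constraints $\e_i\ge 1$ together with $\e_1^2\e_2^2\e_3^3\e_4^2\le B$ and $\e_1^3\e_2^3\e_3^4\e_4^2\e_5^{-2}\ge B$ defining $\R_1'(B)$ become exactly $t_i\ge 0$, $2t_1+2t_2+3t_3+2t_4\le 1$, and $3t_1+3t_2+4t_3+2t_4-2t_5\ge 1$, i.e., the defining inequalities of $P'$ in~\eqref{eq:qa1a3_P}. Consequently
\begin{equation*}
V_0'(B) = B\omega_\infty(\log B)^5 \int_{P'} \dd t_1\cdots\dd t_5 = \alpha(\tS)\omega_\infty B(\log B)^5,
\end{equation*}
using $\vol(P') = \alpha(\tS) = 1/8640$. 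The only delicate part of the argument is the bookkeeping in Step 1: making the triangular substitution match simultaneously the Jacobian, all five height inequalities (via the defining equations of $S$), and the signs encoded in the ranges $J_6',J_7',J_8'$; everything is ultimately forced by Lemma~\ref{lem:qa1a3_bijection} and the geometry of $S$.
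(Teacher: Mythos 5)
Your proof is correct and follows exactly the paper's route: the inner substitution $x_0=B^{-1}\base 0111111\e_8$, $x_1=B^{-1}\base 2232010$, $x_2=B^{-1}\base 1122221$ identifies the $(\e_6,\e_7,\e_8)$-integral over $\R_2'$ with $B\omega_\infty/(\e_1\cdots\e_5)$, and the logarithmic substitution $t_i=\log\e_i/\log B$ turns the $\R_1'(B)$-integral into $\vol(P')(\log B)^5=\alpha(\tS)(\log B)^5$. The paper states these two substitutions without spelling out the Jacobian or the matching of the height conditions with the domain of $\omega_\infty$; your added bookkeeping is accurate and merely makes the same argument explicit.
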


\begin{proof}
  By substituting
  \begin{equation*}
    x_1=B^{-1}\base 2 2 3 2 0 1 0, \ x_2=B^{-1}\base 1 1 2 2 2 2 1, \
    x_0=B^{-1}\base 0 1 1 1 1 1 1\e_8
  \end{equation*}
  into the expression for $\omega_\infty$ given in the statement of
  Theorem~\ref{thm:qa1a3_main}, we prove
  \begin{equation*}
    \frac{B \omega_\infty}{\e_1\cdots\e_5} = \int_{(\e_6,\e_7,\e_8) \in
      \R_2'(\e_1, \dots, \e_5;B)} \e_1^{-1} \dd \e_6 \dd \e_7 \dd \e_8.
  \end{equation*}
  
  Substituting $t_i=\frac{\log \e_i}{\log B}$ into $\alpha(\tS) = \vol(P') =
  \int_{\tt \in P'} \dd \tt$ shows
  \begin{equation*}
    \alpha(\tS)(\log B)^5 = \int_{\R_1'(B)} \frac{1}{\e_1\cdots\e_5} \dd
    \e_1\cdots \dd \e_5.
  \end{equation*}
  This completes the proof.
\end{proof}

\begin{lemma}\label{lem:qa1a3_manipulation}
  We have \[V_0(B) = V_0'(B) + O(B(\log B)^4).\]
\end{lemma}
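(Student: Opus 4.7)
My plan is to analyse the symmetric difference $\R(B)\triangle\R'(B)$ and show that the integral of $\e_1^{-1}$ over each piece is $O(B(\log B)^4)$. First observe that on $\R(B)$ the constraint $\e_6 \ge 1$, combined with the height condition $|\base 2 2 3 2 0 1 0| \le B$, forces $\e_1^2\e_2^2\e_3^3\e_4^2 \le B$ automatically, so the first inequality defining $\R'(B)$ holds throughout $\R(B)$. Consequently $\R(B)\setminus\R'(B)$ is the subset of $\R(B)$ where the second polytope inequality $\e_1^3\e_2^3\e_3^4\e_4^2 \ge B\e_5^2$ fails, while $\R'(B)\setminus\R(B)$ is the union of $\R'(B)\cap\{\e_6<1\}$ and $\R'(B)\cap\{\e_6\ge 1,\ |\e_7|<1\}$. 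It therefore suffices to bound the integral of $\e_1^{-1}\,\dd\ee'$ over each of these three regions.

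For each region I follow the template from the proof of Lemma~\ref{lem:qa1a3_summation_rest}: first bound the inner integral over $\e_8$ by $B^{1/2}/(\e_1^{1/2}\e_2^{1/2}|\e_7|^{1/2})$ via Lemma~\ref{lem:generic_bounds}\eqref{it:generic_bounds_b}, and then estimate the remaining integrations by an integral analogue of Proposition~\ref{prop:several_summations}. The key point is that in each of the three cases exactly one of the seven logarithmic integrations contributes an $O(1)$ factor in place of the usual $O(\log B)$, so the total bound drops from $B(\log B)^5$ to $B(\log B)^4$. Concretely: when $\e_6 \in [0,1)$ the $\e_6$-integration is over a bounded interval; when $|\e_7|<1$ with $\e_6\ge 1$, the $\e_7$-integration is over a bounded interval and the factor $|\e_7|^{-1/2}$ from the inner bound is integrable on $(-1,1)$; and when the second polytope inequality fails, the substitution $t_i = \log\e_i/\log B$ used in the proof of Lemma~\ref{lem:qa1a3_omega_infty} converts the domain to the intersection of $P'$ with the half-space $3t_1+3t_2+4t_3+2t_4-2t_5<1$, which excises one dimension of integration.

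The main obstacle I foresee is the third case: verifying that the polytope restricted by the violated inequality yields a saving of exactly one $\log B$ requires checking that this half-space cut meets $P'$ transversally along one of its bounding faces and that the resulting four-dimensional slice has bounded logarithmic volume. The first two cases are essentially direct reductions of Proposition~\ref{prop:several_summations} with one of the free logarithmic integrations collapsed to an $O(1)$ contribution, and should be routine once the polytope geometry in the third case is pinned down.
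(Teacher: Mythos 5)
Your decomposition of the symmetric difference is exactly the paper's: the telescoping chain $\R_0(B),\dots,\R_4(B)$ in the paper's proof amounts to the three pieces you name (the first polytope inequality being automatic on $\R(B)$, as you observe), and your treatment of the piece $\{\e_6\ge 1,\ |\e_7|<1\}$ coincides with the paper's step $i=2$. However, your plan to use the single bound $\int_{h\le 1}\e_1^{-1}\dd\e_8 \ll B^{1/2}\e_1^{-1/2}\e_2^{-1/2}|\e_7|^{-1/2}$ from Lemma~\ref{lem:generic_bounds}\eqref{it:generic_bounds_b} uniformly in all three pieces does not close the other two cases, and the "one logarithmic integration collapses to $O(1)$" picture is not accurate: after this bound the integrand is no longer $\prod\e_i^{-1}$, so the substitution $t_i=\log\e_i/\log B$ does not convert the problem into a volume computation on a slice of $P'$, and the saving has to come from an exponent calculation, not from transversality of a half-space cut.

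Concretely, for the piece where $\base 3342{-2}00 < B$ fails to hold (your third case, which you rightly flag as the obstacle): after applying \eqref{it:generic_bounds_b} and integrating out $\e_6$ and $\e_7$ via the height constraints, the exponent of $\e_5$ in the denominator comes out to be exactly $1$, so the lower bound $\e_5^2 > \base 3342000/B$ yields no saving and you land back at $B(\log B)^5$. The paper instead integrates $\e_6$ and $\e_8$ \emph{jointly} via Lemma~\ref{lem:generic_bounds}\eqref{it:generic_bounds_b2}, producing $\e_5^{-3/2}$; the integral then converges at infinity and the lower bound on $\e_5$ converts into the needed power of $B$. Similarly, for the piece $\e_6\in[0,1)$, once $\e_6$ is allowed near $0$ the height conditions no longer give a usable upper bound on $|\e_7|$ (every monomial controlling $\e_7$ carries a positive power of $\e_6$), so after your $\e_8$-bound the $\e_7$-integral of $|\e_7|^{-1/2}$ cannot be controlled; the paper integrates $\e_7$ and $\e_8$ jointly via Lemma~\ref{lem:generic_bounds}\eqref{it:generic_bounds_b1}, obtaining a bound independent of $\e_7$ that decays like $\e_6^{-2/3}$ and is integrable over $[0,1]$. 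So the missing idea is the case-by-case choice of a two-variable bound from Lemma~\ref{lem:generic_bounds} tailored to make the integrand decay in the critical variable of each piece; as written, two of your three cases do not go through.
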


\begin{proof}
  We define 
  \begin{equation*}
    V^{(i)}(B) = \int_{h(\ee'; B) \le 1,\ (\ee',\e_8) \in \R_i(B)} \e_1^{-1} \dd \ee',
  \end{equation*}
  where
  \begin{equation*}
    \begin{split}
      \R_0(B)&= \{\ee' \in J_1' \times \dots \times J_8' \where \e_6,
      |\e_7| \ge 1\},\\
      \R_1(B)&= \{\ee' \in J_1' \times \dots \times J_8' \where \e_6,
      |\e_7| \ge 1,\ \base 2232000 \le B\},\\
      \R_2(B)&= \left\{\ee' \in J_1' \times \dots \times J_8' \Where
        \begin{aligned}
          &\e_6, |\e_7| \ge 1,\\ &\base 2232000 \le B,\ \base 3342{-2}00 \ge B
        \end{aligned}
      \right\},\\
      \R_3(B)&= \{\ee' \in J_1' \times \dots \times J_8' \where \e_6
      \ge 1,\ \base 2232000 \le B,\ \base 3342{-2}00 \ge B\},\\
      \R_4(B)&= \{\ee' \in J_1' \times \dots \times J_8' \where \base
      2232000 \le B,\ \base 3342{-2}00 \ge B\}.
    \end{split}
  \end{equation*}
  For $i \in \{0, \dots, 3\}$, we will show that \[|V^{(i)}(B) - V^{(i+1)}(B)|
  \le \int_{\ee' \in (\R_i(B) \cup \R_{i+1}(B)) \setminus (\R_i(B) \cap
    \R_{i+1}(B)),\ h(\ee';B) \le 1} \e_1^{-1} \dd \ee'\] is $O(B(\log B)^4)$.
  Since $V_0(B) = V^{(0)}(B)$ and $V_0'(B) = V^{(4)}(B)$, this proves the
  result.

  For $i=0$, we note that $h(\ee',\e_8;B)\le 1$ and $\e_6 \ge 1$ imply $\base
  2232000 \le B$. Therefore, $V^{(0)}(B) = V^{(1)}(B)$.

  For $i=1$, we note that $\ee' \in \R_1(B) \setminus \R_2(B)$ implies $\e_5^2 >
  \base 3 3 4 2 0 0 0/B$ and $1 \le \e_1,\e_2,\e_3,\e_4 \le B$ and $|\e_7|\ge
  1$. Combining these bounds for the integration over $\e_1, \dots, \e_5,
  \e_7$ with
  \begin{equation*}
    \int_{h(\ee';B) \le 1} \e_1^{-1} \dd \e_6 \dd \e_8 \ll
    \left(\frac{B^{3}}{|\base{1}{1}{0}{2}{6}{0}{5}|}\right)^{1/4}
  \end{equation*}
  by Lemma~\ref{lem:generic_bounds}\eqref{it:generic_bounds_b2} leads to the
  estimation
  \begin{equation*}
    \begin{split}
      V^{(1)}(B) - V^{(2)}(B) \ll{}& \int
      \left(\frac{B^{3}}{|\base{1}{1}{0}{2}{6}{0}{5}|}\right)^{1/4} \dd
      \e_1\cdots\dd \e_5 \dd \e_7\\
      \ll{}& \int \frac{B}{\e_1\e_2\e_3\e_4|\e_7|^{5/4}}\dd \e_1 \cdots
      \dd\e_4\dd\e_7\\
      \ll{}& B(\log B)^4.
    \end{split}
  \end{equation*}

  For $i=2$, we note that $\ee' \in \R_3(B) \setminus \R_2(B)$ implies $|\e_7|
  \le 1$, $0 \le \e_6 \le B/(\base 2232000)$, $\e_5^2 \le \base 3342000/B$ and
  $1 \le \e_1, \dots, \e_4 \le B$. We combine these bounds for the integration
  over $\e_1, \dots, \e_7$ with
  \begin{equation*}
    \int_{h(\ee';B) \le 1} \e_1^{-1} \dd \e_8 \ll \frac{B^{1/2}}{\e_1^{1/2}\e_2^{1/2}|\e_7|^{1/2}}
  \end{equation*}
  by Lemma~\ref{lem:generic_bounds}\eqref{it:generic_bounds_b} for the
  integration over $\e_8$ to obtain
  \begin{equation*}
    \begin{split}
      V^{(4)}(B)-V^{(3)}(B)
      \ll{}&\int \frac{B^{1/2}}{\e_1^{1/2}\e_2^{1/2}} \dd \e_1\cdots \dd \e_6\\
      \ll{}&\int \frac{B^{3/2}}{\base{5/2}{5/2}{3}{2}000} \dd \e_1\cdots \dd
      \e_5\\
      \ll{}&\int \frac{B}{\base 1 1 1 1 0 0 0} \dd \e_1\cdots\dd\e_4\\
      \ll{}&B(\log B)^4.
    \end{split}
  \end{equation*}

  For $i=3$, we note that $\ee' \in \R_4(B) \setminus \R_3(B)$ implies
  $|\e_6|\le 1$, $\e_4^2 \le B/(\base 2 2 3 0 0 0 0)$ and $1 \le \e_1, \e_2,
  \e_3, \e_5 \le B$. We combine these bounds for the integration over $\e_1,
  \dots, \e_6$ with \[\int_{h(\ee';B) \le 1} \e_1^{-1} \dd \e_8 \dd \e_7 \ll
  \frac{B^{2/3}}{\base{1/3}{1/3}0{1/3}1{2/3}0}\] by
  Lemma~\ref{lem:generic_bounds}\eqref{it:generic_bounds_b1} to show that
  \begin{equation*}
    \begin{split}
      V^{(5)}(B)-V^{(4)}(B) &\ll
      \int \frac{B^{2/3}}{\base{1/3}{1/3}0{1/3}100} \dd \e_1 \cdots \dd \e_5\\
      &\ll \int \frac{B}{\base 1 1 1 0 1 0 0} \dd \e_1 \dd \e_2 \dd \e_3 \dd
      \e_5\\
      &\ll B(\log B)^4.
    \end{split}
  \end{equation*}
  This completes the proof.
\end{proof}

Theorem~\ref{thm:qa1a3_main} follows from
Lemma~\ref{lem:qa1a3_summation_rest}, Lemma~\ref{lem:qa1a3_omega_infty} and
Lemma~\ref{lem:qa1a3_manipulation}.

\bibliographystyle{alpha}

\bibliography{bibliography}

\end{document}